\crefname{theorem}{Theorem}{Theorems}
\crefname{thm}{Theorem}{Theorems}
\crefname{lemma}{Lemma}{Lemmas}
\crefname{lem}{Lemma}{Lemmas}
\crefname{remark}{Remark}{Remarks}
\crefname{prop}{Proposition}{Propositions}
\crefname{defn}{Definition}{Definitions}
\crefname{corollary}{Corollary}{Corollaries}
\crefname{conjecture}{Conjecture}{Conjectures}
\crefname{question}{Question}{Questions}
\crefname{chapter}{Chapter}{Chapters}
\crefname{section}{Section}{Sections}
\crefname{figure}{Figure}{Figures}
\theoremstyle{plain}
\newtheorem{thm}{Theorem}[section]
\newtheorem{lemma}[thm]{Lemma}
\newtheorem{theorem}[thm]{Theorem}
\newtheorem{corollary}[thm]{Corollary}
\newtheorem{prop}[thm]{Proposition}
\theoremstyle{definition}
\theoremstyle{remark}
\newtheorem{remark}[thm]{Remark}
\numberwithin{equation}{section}
\renewcommand{\P}{\mathbb P}
\newcommand{\E}{\mathbb E}
\newcommand{\C}{\mathbb C}
\newcommand{\R}{\mathbb R}
\newcommand{\Z}{\mathbb Z}
\newcommand{\N}{\mathbb N}
\newcommand{\cA}{\mathcal A}
\newcommand{\cB}{\mathcal B}
\newcommand{\cM}{\mathcal M}
\newcommand{\sA}{\mathscr A}
\newcommand{\sC}{\mathscr C}
\newcommand{\bbH}{\mathbb H}
\newcommand{\eps}{\varepsilon}
\newcommand{\Aut}{\operatorname{Aut}}
\newcommand{\lrDini}[1]{\left(\frac{d}{d #1}\right)_{\hspace{-0.2em}+}\!}
\newcommand{\myfrac}[3][0pt]{\genfrac{}{}{}{}{\raisebox{#1}{$#2$}}{\raisebox{-#1}{$#3$}}}
\def\P{\mathbb{P}}
\newcommand{\Chi}{\mathcal{X}}
\DeclareMathSymbol{\leqslant}{\mathalpha}{AMSa}{"36} 
\DeclareMathSymbol{\geqslant}{\mathalpha}{AMSa}{"3E} 
\DeclareMathSymbol{\eset}{\mathalpha}{AMSb}{"3F}     
\renewcommand{\epsilon}{\varepsilon}
\tikzset{nomorepostaction/.code=\let\tikz@postactions\pgfutil@empty}
\DeclareFontFamily{OMX}{MnSymbolE}{}
\DeclareSymbolFont{MnLargeSymbols}{OMX}{MnSymbolE}{m}{n}
\DeclareFontShape{OMX}{MnSymbolE}{m}{n}{
    <-6>  MnSymbolE5
   <6-7>  MnSymbolE6
   <7-8>  MnSymbolE7
   <8-9>  MnSymbolE8
   <9-10> MnSymbolE9
  <10-12> MnSymbolE10
  <12->   MnSymbolE12
}{}
\DeclareFontShape{OMX}{MnSymbolE}{b}{n}{
    <-6>  MnSymbolE-Bold5
   <6-7>  MnSymbolE-Bold6
   <7-8>  MnSymbolE-Bold7
   <8-9>  MnSymbolE-Bold8
   <9-10> MnSymbolE-Bold9
  <10-12> MnSymbolE-Bold10
  <12->   MnSymbolE-Bold12
}{}
\let\llangle\@undefined
\let\rrangle\@undefined
\DeclareMathDelimiter{\llangle}{\mathopen}%
                     {MnLargeSymbols}{'164}{MnLargeSymbols}{'164}
\DeclareMathDelimiter{\rrangle}{\mathclose}%
                     {MnLargeSymbols}{'171}{MnLargeSymbols}{'171}
\title{\bf On the derivation of mean-field percolation critical exponents from the triangle condition
}
\renewenvironment{abstract}
 {\par\noindent\textbf{\abstractname.}\ \ignorespaces}
 {\par\medskip}
\newcommand{\Tr}{\operatorname{Tr}}
\author{{\bf Tom Hutchcroft}}
\begin{document}

\date{\small{\today}}

\maketitle

\setstretch{1.1}

\begin{abstract}
We give a new derivation of mean-field percolation critical behaviour from the triangle condition that is 
 quantitatively much better than previous proofs when the triangle diagram $\nabla_{p_c}$ is large. In contrast to earlier methods, our approach continues to yield bounds of reasonable order when the triangle diagram $\nabla_p$ is unbounded but diverges slowly as $p \uparrow p_c$, as is expected to occur in percolation on $\Z^d$ at the upper-critical dimension $d=6$. Indeed, we show in particular that if the triangle diagram diverges polylogarithmically as $p \uparrow p_c$ then mean-field critical behaviour holds to within a polylogarithmic  factor. We apply the methods we develop to deduce that for long-range percolation on the hierarchical lattice, mean-field critical behaviour holds to within polylogarithmic factors at the upper-critical dimension. 

 As part of the proof, we introduce a new method for comparing diagrammatic sums on general transitive graphs that may be of independent interest.
\end{abstract}



\tableofcontents



\section{Introduction}\label{sec:intro}

\subsection{A new differential inequality for the susceptibility.}

\emph{The triangle condition} is a sufficient condition for mean-field critical behaviour in Bernoulli percolation models that was introduced by Aizenman and Newman \cite{MR762034} and proven to hold on high-dimensional Euclidean lattices in the milestone work of Hara and Slade \cite{MR1043524}. Let $G=(V,E)$ be a connected, locally finite, transitive graph, let $o$ be a fixed vertex of $G$, and for each $0\leq p \leq 1$ and $x,y\in V$ let $T_p(x,y)=\P_p(x\leftrightarrow y)$ be the probability that $x$ and $y$ are connected in Bernoulli-$p$ bond percolation. We think of $T_p$ as a matrix indexed by $V\times V$ and refer to it as the \textbf{two-point matrix}. For each $0\leq p \leq 1$ we define the \textbf{triangle diagram} $\nabla_p$ to be
\[
\nabla_p := \sum_{x,y \in V} T_p(o,x)T_p(x,y)T_p(y,o)=T_p^3(o,o).
\]
We say that $G$ satisfies the \textbf{triangle condition} if $\nabla_{p}$ is finite when $p$ is equal to the critical probability $p_c=p_c(G)$. Aizenman and Newman \cite{MR762034} and Barsky and Aizenman \cite{MR1127713} proved that if $G$ is, say, a Cayley graph satisfying the triangle condition then $G$ has mean-field critical behaviour for percolation in the sense that if $K$ denotes the cluster of the origin then
\begin{align}
\E_p |K| &\asymp |p-p_c|^{-1} &&\text{ as $p\uparrow p_c$,}
\label{eq:mean_field_susceptibility}\\
\P_{p_c}(|K|\geq n) &\asymp n^{-1/2} &&\text{ as $n \uparrow\infty$, and}
\label{eq:mean_field_volume}
\\
\P_p(|K|=\infty) &\asymp |p-p_c| &&\text{ as $p\downarrow p_c$,}
\label{eq:mean_field_density}
\end{align}
where we write $\asymp$ to denote an equality holding to within multiplication by positive constants within the vicinity of the relevant limit point. Note that the \emph{lower bounds} of \eqref{eq:mean_field_susceptibility} and \eqref{eq:mean_field_density} are known to hold on every transitive graph \cite{aizenman1987sharpness,duminil2015new}. The triangle condition has now been proven to hold in a variety of high-dimensional settings \cite{fitzner2015nearest,hutchcroft2021critical,1804.10191,MR1833805,Hutchcroftnonunimodularperc,MR2165583,MR3306002,MR2430773}, and 
further works studying critical behaviour in high-dimensional percolation under the triangle condition include \cite{MR2551766,MR2748397,MR923855,dewan2021upper,chatterjee2018restricted,MR2155704}; see \cite{heydenreich2015progress} for an overview of this extensive literature and \cite{grimmett2010percolation} for further background on percolation.


The proofs of \cref{eq:mean_field_susceptibility,eq:mean_field_volume,eq:mean_field_density} rely crucially on \emph{differential inequalities}. Let us consider as a paradigmatic example the rate of growth of the \emph{susceptibility} $\Chi_p:=\E_p |K|$ as $p\uparrow p_c$, whose behaviour under the triangle condition was determined by Aizenman and Newman \cite{MR762034}. It is a fundamental fact known as the \emph{sharpness of the phase transition} and due originally to Menshikov \cite{MR852458} and Aizenman and Barsky \cite{aizenman1987sharpness} that $\Chi_p < \infty$ if and only if $p<p_c$; see also \cite{duminil2015new,MR3898174,1901.10363} for alternative proofs. Moreover, the susceptibility $\Chi_p$ is a smooth function of $p$ on $[0,p_c)$ \cite[Section 6.4]{grimmett2010percolation} and satisfies the differential inequality
\begin{equation}
\label{eq:Chi_prime_upper}
\frac{d\Chi_p}{dp} \leq 2\deg(o) \Chi_p^2
\end{equation}
for every $0<p<p_c$ \cite[Equation 4.2.4]{heydenreich2015progress} (in fact the factor $2$ is not needed). This inequality can be integrated to obtain that the \emph{mean-field lower bound}
\begin{equation}
\label{eq:Chi_lower}
\Chi_p \geq \frac{1}{2\deg(o)} |p-p_c|^{-1}
\end{equation}
holds for every $0\leq p <p_c$. In order to establish mean-field critical behaviour under the triangle condition, the key step is to establish a \emph{lower bound} on the derivative of $\Chi_p$ of the same order as the upper bound of \eqref{eq:Chi_prime_upper}, which can then be integrated to obtain an upper bound on $\Chi_p$ of the same order as the lower bound of \eqref{eq:Chi_lower}. To do this, one  considers the so-called \emph{open triangle} $\nabla_{p_c}(r) = \sup_{x:d(o,x) \geq r} T_{p_c}^3(o,x)$, which tends to zero as $r \to \infty$ when the triangle condition holds \cite{MR2779397}, and argues that for each $r \geq 1$ there exists $\eps(r)>0$ such that
\begin{equation}
\label{eq:bad_old_diff_ineq}
\frac{d\Chi_p}{dp} \geq \eps(r) (1-\nabla_{p_c}(r)) \Chi_p^2
\end{equation}
for every $p_c/2 \leq p<p_c$. See also the proof of \cite[Theorem 10.68]{grimmett2010percolation} for a detailed treatment of the case $G=\Z^d$. Taking $r$ to be sufficiently large that $\nabla_{p_c}(r)<1$ gives a differential inequality of the required form. Unfortunately, the dependence of $\eps(r)$ on $r$ given by these arguments tends to be very bad (e.g.\ exponentially small in $r$), so that the implicit constants appearing in \cref{eq:mean_field_susceptibility,eq:mean_field_volume,eq:mean_field_density} tend to be extremely large when the triangle sum $\nabla_{p_c}$ is finite but large. The same shortcomings make these methods poorly suited to situations where, say, $\nabla_{p_c}$ is infinite but $\nabla_{p}$ diverges slowly as $p \uparrow p_c$, as is expected to happen on $\Z^d$ at the upper-critical dimension $d=6$.


In this paper we give a new derivation of mean-field critical behaviour from the triangle condition that is arguably simpler than the standard method (in the case of $\Z^d$) and that gives much better quantitative control when the triangle sum is large. We then apply this new method to study long-range percolation on the \emph{hierarchical lattice} at the upper-critical dimension as discussed in detail in \cref{subsec:hierarchical_intro}. 
Since we are interested in applying our results to both long-range and nearest-neighbour models, we work with percolation on \emph{weighted graphs}, which we now briefly define. A \textbf{weighted graph} $G=(V,E,J)$ is a countable graph $(V,E)$ together with an assignment of non-negative \textbf{weights} 
 $\{J_e : e \in E\}$ such that $\sum_{e\in E^\rightarrow_v} J_e < \infty$ for each $v\in V$, where $E^\rightarrow_v$ denotes the set of oriented edges of $G$ emanating from the vertex $v$. (Although all our graphs are unoriented, it is useful to think of each unoriented edge as corresponding to a pair of oriented edges pointing in opposite directions.) Locally finite graphs can be considered as weighted graphs by setting $J_e \equiv 1$. A graph automorphism of $(V,E)$ is a weighted graph automorphism of $(V,E,J)$ if it preserves the weights, and a weighted graph $G$ is said to be \textbf{transitive} if any vertex can be mapped to any other vertex by an automorphism. Given a weighted graph $G=(V,E,J)$ and $\beta\geq 0$, we define Bernoulli-$\beta$ bond percolation on $G$ to be the random subgraph of $G$ in which each edge $e$ is chosen to be either retained or deleted independently at random with retention probability $1-e^{-\beta J_e}$, and write $\P_\beta=\P_{G,\beta}$ for the law of this random subgraph. 


 The main contribution of the paper is the following simple explicit strengthening of the differential inequality \eqref{eq:bad_old_diff_ineq}. Throughout the paper we will assume without loss of generality that $\sum_{e\in E^\rightarrow_o} J_e=1$; multiplying all edge weights by a constant so that this holds is equivalent to multiplying the parameter $\beta$ by a constant. As above, when $G$ is transitive we fix a vertex $o$ of $G$, write $K$ for the cluster of $o$, and write $\Chi_\beta=\E_\beta |K|$ for the susceptibility, which is a finite-valued, smooth function of $\beta$ on $[0,\beta_c)$ by sharpness of the phase transition \cite{aizenman1987sharpness,duminil2015new,1901.10363}.

\begin{theorem}
\label{thm:main_chi}
Let $G$ be a connected, unimodular, transitive weighted graph that is normalised so that $\sum_{e\in E^\rightarrow_o} J_e=1$. Then
\begin{equation}
\label{eq:mainchi}
\frac{d \Chi_\beta}{d\beta} \geq  \frac{\Chi_\beta(\Chi_\beta-\nabla_\beta)}{3 \beta^2 \nabla_\beta^2}
\end{equation}
for every $0\leq \beta < \beta_c$.
\end{theorem}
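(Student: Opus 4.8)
The plan is to establish the differential inequality $d\Chi_\beta/d\beta \geq \Chi_\beta(\Chi_\beta - \nabla_\beta)/(3\beta^2 \nabla_\beta^2)$ by a careful analysis of Russo's formula combined with a tree-graph / diagrammatic expansion. Recall that by Russo's formula (in its weighted-graph form), $d\Chi_\beta/d\beta = \sum_{e\in E} J_e \,\P_\beta(e \text{ is pivotal for } \{o \leftrightarrow x\}, \text{ summed over } x)$; more precisely, $d\Chi_\beta/d\beta = \sum_x \sum_{e = \{u,v\}} J_e \,\P_\beta(e \text{ pivotal for } o\leftrightarrow x)$. When $e = \{u,v\}$ is pivotal for $o\leftrightarrow x$, either $o$ connects to $u$ off $e$ and $v$ connects to $x$ off $e$, or vice versa. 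The standard lower bound step is to forget the "off $e$" restriction and disjointness, giving $d\Chi_\beta/d\beta \gtrsim \sum_{e=\{u,v\}} J_e [\text{something like } T_\beta(o,u)\Chi_\beta]$, but this overcounts and one needs a second-moment-type correction controlled by the triangle. The novelty here, I expect, is to extract the factor $1/\nabla_\beta^2$ in a clean way rather than through the open-triangle $\nabla_{p_c}(r)$.

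**First I would** set up the precise Russo computation and define, for each ordered pair $(u,v)$ with $\{u,v\}\in E$, the event $A_{u,v}(x)$ that $o\leftrightarrow u$ off $e$, $e$ is closed-or-open, and $v \leftrightarrow x$ in the configuration off $e$, with $o\not\leftrightarrow x$ off $e$. Summing over $x$ and using transitivity to move $o$ to a generic vertex, one gets a lower bound of the shape $d\Chi_\beta/d\beta \geq \beta^{-1}\cdot(\text{retention-probability factor})\cdot \sum_{u} \P_\beta(o\leftrightarrow u)\,\E_\beta[|K'| ; K' \not\ni o]$ where $K'$ is the cluster of $v$ in the configuration with $e$ deleted — the $\beta^{-1}$ (or rather a factor like $J_e/(1-e^{-\beta J_e}) \le \beta^{-1}$ after normalisation, which is where the $\beta^2$ in the denominator will ultimately come from, one power from Russo and one from re-expressing a pivotal probability) being the source of the $\beta^2$ in the bound. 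The key inequality to prove is then a lower bound on $\sum_u T_\beta(o,u)\,\E_\beta[|\{v: v\leftrightarrow u\}|\text{ avoiding a region}]$ of the form $\gtrsim \Chi_\beta(\Chi_\beta - \nabla_\beta)/\nabla_\beta^2$.

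**The hard part will be** the diagrammatic comparison: showing that the "bad" contributions — where the two connection pieces overlap, or where one must subtract connections that also reach $o$ — are bounded by a triangle-type quantity, and crucially extracting the correct power $\nabla_\beta^2$ in the denominator rather than a $(1-\nabla_{p_c}(r))$ factor. I anticipate this goes via a Cauchy–Schwarz or rearrangement argument on the matrix $T_\beta$: one wants to say that $\sum_{u,v} T_\beta(o,u) T_\beta(u,v) \mathbf{1}[\text{some overlap}] \le \Chi_\beta \cdot (\text{triangle-ish})$, and then that $T_\beta^2(o,o) = \sum_u T_\beta(o,u)^2 \le \nabla_\beta$ type bounds let one convert $\Chi_\beta = \sum_u T_\beta(o,u)$ into something comparable to $\nabla_\beta \cdot (\text{average of } T_\beta)$. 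The referenced "new method for comparing diagrammatic sums on general transitive graphs" is presumably exactly this lemma, so I would isolate it: prove that for a nonnegative symmetric transitive matrix $T$ with $T(o,o)$ finite, certain restricted sums $\sum T(o,x)T(x,y)T(y,z)\mathbf{1}[\cdots]$ are controlled by $T^3(o,o)$ and the row sum $\sum_x T(o,x)$, then feed this into the Russo bound. Unimodularity enters to justify the mass-transport rearrangements that let us treat $o$ as an averaged/generic vertex and to symmetrise the pivotal decomposition; I would invoke the mass-transport principle at each step where a sum over $u$ with $o$ fixed is traded for a sum over $o$ with $u$ fixed.

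**One subtlety I would flag** is the interplay between the two ways $e=\{u,v\}$ can be pivotal (contributing a factor of $2$, matching the $2\deg(o)$ in \eqref{eq:Chi_prime_upper}) versus the loss incurred by the diagrammatic subtraction of $o$-reaching configurations, which is what produces the $(\Chi_\beta - \nabla_\beta)$ numerator instead of $\Chi_\beta$; getting the constant $3$ in the denominator will require tracking these factors honestly rather than absorbing them. I would organise the proof as: (i) Russo + BK/van den Berg–Kesten-type splitting; (ii) the mass-transport rearrangement; (iii) the diagrammatic comparison lemma giving the $\nabla_\beta^{-2}$; (iv) bookkeeping of constants.
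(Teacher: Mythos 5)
Your outline correctly identifies the high-level architecture---Russo's formula, a BK-type decomposition, a mass-transport rearrangement, and a diagrammatic comparison extracting $\nabla_\beta^{-2}$---and you are right to flag the diagrammatic comparison as the crux. But the two technical inventions that actually make the paper's proof work are absent from your plan, and I do not see how to fill them in from what you have written.

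First, the route from Russo to a usable lower bound is not ``forget the off-$e$ restriction and overcount, then correct.'' The paper instead introduces the functional $\Phi_\beta(S)=\sum_{e\in\partial_E^\rightarrow S}\sum_{v\notin S}J_e\,\P_\beta(e^+\leftrightarrow v\text{ off }S)$, observes $\frac{d\Chi_\beta}{d\beta}\geq\E_\beta[\Phi_\beta(K)]$, and then proves a deterministic lower bound on $\Phi_\beta(S)$ for every finite $S$: starting from a BK \emph{upper} bound $T_\beta(u,w)\leq\beta\sum_{v\in S}T_\beta(u,v)\sum_e\mathbbm{1}(e^+\notin S)J_e\P_\beta(e^+\leftrightarrow w\text{ off }S)$ (last exit of an open path from $S$), summing over $u\in S$, $w\notin S$, and applying $\langle T_\beta\mathbbm{1}_S,f\rangle\leq\|T_\beta\mathbbm{1}_S\|_2\|f\|_1^{1/2}\|f\|_\infty^{1/2}$ with $\|f\|_1=\Phi_\beta(S)$. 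This $L^1$--$L^\infty$ interpolation step is what inverts the BK upper bound into a lower bound on $\Phi_\beta(S)$, and it is where the denominator $\beta^2\,\Chi_\beta\sum_{u,v,w\in S}T_\beta(u,v)T_\beta(u,w)$ and the numerator $\bigl(\Chi_\beta|S|-\sum_{u,v\in S}T_\beta(u,v)\bigr)^2$ arise; the $\beta^2$ comes from one use of $1-e^{-\beta J_e}\leq\beta J_e$ followed by squaring, not from ``one power from Russo and one from re-expressing a pivotal probability.'' One then applies $\E[X^2/Y]\geq\E[X]^2/\E[Y]$ and the Aizenman--Newman tree-graph inequalities for the three- and four-point functions to land on $\frac{d\Chi_\beta}{d\beta}\geq\frac{\Chi_\beta(\Chi_\beta-\nabla_\beta)}{\beta^2(2A_\beta+B_\beta)}$, where $A_\beta$ and $B_\beta$ are the two distinct four-loop diagrams produced by the tree-graph bound on $\P_\beta(a,b,c\in K)$. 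The $3$ comes from the three terms of that tree-graph bound together with $B_\beta\leq A_\beta\leq\nabla_\beta^2$, not from double-counting orientations of a pivotal edge.

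Second, and more seriously, your plan for the comparison $2A_\beta+B_\beta\leq 3\nabla_\beta^2$ (``Cauchy--Schwarz or rearrangement on $T_\beta$, $T_\beta^2(o,o)\leq\nabla_\beta$-type bounds'') would at best give $A_\beta\leq\nabla_\beta^2$, which follows from positive semidefiniteness of $T_\beta^3$ (its diagonal dominates) and the mass-transport identity $A_\beta=\sum_x T_\beta(o,x)T_\beta^2(o,x)T_\beta^3(o,x)$. The genuinely hard inequality $B_\beta\leq A_\beta$, i.e.\ $\sum_x T_\beta^2(o,x)^3\leq\sum_x T_\beta(o,x)T_\beta^2(o,x)T_\beta^3(o,x)$, is what pins down the $\nabla_\beta^2$ rather than something worse. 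The paper proves it as a consequence of Ando's Hadamard-product inequality $(ST)^{1/2}\circ(ST)^{1/2}\leq S\circ T$ for commuting positive semidefinite operators (applied with $S=T_\beta$, $T=T_\beta^3$), which in turn rests on the Schur product theorem and a block-matrix/Schur-complement argument, combined with the trace on the group von Neumann algebra $\cA(G)$ to convert the operator inequality $T_\beta^2\circ T_\beta^2\leq T_\beta\circ T_\beta^3$ into the scalar inequality $B_\beta\leq A_\beta$ via Fej\'er's $\Tr(ST)\geq 0$. On $\Z^d$ this collapses to a one-line Fourier/AM--GM computation, but on a general unimodular transitive graph no such shortcut is available, and nothing in your sketch points toward this machinery. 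Without it, the proof of the theorem as stated---with the explicit constant $3$ and the power $\nabla_\beta^2$---does not close.
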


Here, we recall that a connected, transitive weighted graph $G=(V,E,J)$ is said to be \textbf{unimodular} if it satisfies the \textbf{mass-transport principle}, meaning that if $F:V\times V\to [0,\infty]$ is diagonally invariant in the sense that $F(\gamma x,\gamma y)=F(x,y)$ for every $x,y \in V$ and $\gamma \in \Aut(G)$ then
\begin{equation}
\label{eq:MTP}
\sum_{x\in V} F(o,x)=\sum_{x\in V} F(x,o).
\end{equation}
Most transitive weighted graphs arising in applications are unimodular, including all amenable transitive weighted graphs and all Cayley graphs of countable groups \cite{MR1082868}. Indeed, if $G$ is a Cayley graph of a countable group $\Gamma$ then left multiplication by an element of $\Gamma$ defines an automorphism of $G$ and we trivially verify \eqref{eq:MTP} by writing
\[
\sum_{\gamma\in \Gamma} F(\mathrm{id},\gamma) = \sum_{\gamma \in \Gamma} F(\gamma^{-1}\mathrm{id},\gamma^{-1} \gamma) =\sum_{\gamma \in \Gamma} F(\gamma^{-1},\mathrm{id}) = \sum_{\gamma \in \Gamma} F(\gamma,\mathrm{id}).
\]
(Note that all \emph{nonunimodular} transitive graphs are proven to have mean-field critical behaviour for  percolation in \cite{Hutchcroftnonunimodularperc} via different methods that are exclusive to the nonunimodular case.)

\medskip

Integrating the differential inequality \eqref{eq:mainchi} yields the following corollary, which recovers the estimate \eqref{eq:mean_field_susceptibility} when the triangle condition holds. 

\begin{corollary}
\label{cor:integrated}
Let $G$ be a connected, unimodular, transitive weighted graph that is normalised so that $\sum_{e\in E^\rightarrow_o} J_e=1$. Then
\[
\Chi_\beta \leq \left(1+\frac{\beta_c-\beta}{3\beta \beta_c}  \right) \left[ \int_\beta^{\beta_c} \frac{1}{3 \lambda^2 \nabla_\lambda^2} \dif \lambda \right]^{-1}
\]
for every $0\leq \beta < \beta_c$.
\end{corollary}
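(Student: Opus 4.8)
The plan is to integrate the differential inequality \eqref{eq:mainchi} after recasting it as a linear differential inequality for the reciprocal susceptibility $V_\beta := 1/\Chi_\beta$. First I would record the elementary inputs that will be used: $\nabla_\beta \ge 1$ for every $\beta$ (retain only the $x=y=o$ term in the definition of $\nabla_\beta$); $\nabla_\beta$ is non-decreasing in $\beta$ by the standard monotone coupling of Bernoulli percolation; $\nabla_\beta$ is finite for $\beta<\beta_c$; and $\Chi_\beta$ is smooth and positive on $[0,\beta_c)$ with $\Chi_\beta\to\infty$ as $\beta\uparrow\beta_c$ by sharpness, so that $V_\beta$ is smooth, positive and non-increasing on $[0,\beta_c)$ with $V_\beta\to0$ as $\beta\uparrow\beta_c$. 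Dividing \eqref{eq:mainchi} by $\Chi_\beta^2$ and rearranging---which is valid whatever the sign of $\Chi_\beta-\nabla_\beta$---turns it into
\[
V_\beta'\;\le\; a_\beta V_\beta - g_\beta,\qquad a_\beta:=\tfrac{1}{3\beta^2\nabla_\beta},\quad g_\beta:=\tfrac{1}{3\beta^2\nabla_\beta^{2}}=\tfrac{a_\beta}{\nabla_\beta},
\]
for $0<\beta<\beta_c$, where moreover $a_\beta\le\tfrac1{3\beta^2}$ since $\nabla_\beta\ge1$.

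Fixing $\beta\in(0,\beta_c)$, I would multiply this linear inequality by the integrating factor $\exp\!\big(\int_\lambda^{\beta_c}a_\mu\,d\mu\big)$ and integrate from $\beta$ to $\beta_c$; the boundary contribution at $\beta_c$ drops out in the limit because $V_\lambda\to0$ and the exponent tends to $0$, leaving
\[
V_\beta\;\ge\;\int_\beta^{\beta_c}e^{-U_\lambda}\,g_\lambda\,d\lambda,\qquad U_\lambda:=\int_\beta^{\lambda}a_\mu\,d\mu.
\]
Writing $W_\beta:=\int_\beta^{\beta_c}g_\lambda\,d\lambda=\int_\beta^{\beta_c}\tfrac{d\lambda}{3\lambda^2\nabla_\lambda^{2}}$, which is finite and strictly positive because $0<g_\lambda\le\tfrac1{3\lambda^2}$, and setting $c:=\tfrac{\beta_c-\beta}{3\beta\beta_c}$, the corollary is equivalent to showing $V_\beta\ge(1+c)^{-1}W_\beta$.

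The heart of the matter---and the step I expect to be the main obstacle---is to pass from the integral lower bound for $V_\beta$ to this sharp estimate: the crude bound $e^{-U_\lambda}\ge e^{-U_{\beta_c}}$ only yields the weaker $\Chi_\beta\le e^{c}W_\beta^{-1}$ (and the still cruder route of bounding $V_\lambda\le1$ before integrating gives $\Chi_\beta\le(W_\beta-c)^{-1}$, which may be vacuous near $\beta_c$). The gain comes from the positive correlation of the two factors in the integral: $g_\lambda$ is large precisely where $\nabla_\lambda$---and hence the surviving weight $e^{-U_\lambda}$---is large. To make this precise I would write $g_\lambda\,d\lambda=\nabla_\lambda^{-1}a_\lambda\,d\lambda$ and view the integral as an average against the finite measure $d\nu:=a_\lambda\,d\lambda$ on $[\beta,\beta_c]$, whose total mass is $T:=U_{\beta_c}=\int_\beta^{\beta_c}a_\mu\,d\mu\le c$; since $e^{-U_\lambda}$ and $\nabla_\lambda^{-1}$ are both non-increasing in $\lambda$ (the latter because $\nabla$ is non-decreasing), Chebyshev's correlation inequality for the probability measure $T^{-1}\,d\nu$ gives
\[
T\int_\beta^{\beta_c}e^{-U_\lambda}g_\lambda\,d\lambda = T\int_\beta^{\beta_c}e^{-U_\lambda}\nabla_\lambda^{-1}\,d\nu \;\ge\;\Big(\int_\beta^{\beta_c}e^{-U_\lambda}\,d\nu\Big)\Big(\int_\beta^{\beta_c}\nabla_\lambda^{-1}\,d\nu\Big)=(1-e^{-T})\,W_\beta,
\]
using $\int_\beta^{\beta_c}e^{-U_\lambda}\,d\nu=\int_\beta^{\beta_c}e^{-U_\lambda}a_\lambda\,d\lambda=1-e^{-T}$.

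Finally, combining the two displays with the elementary inequalities $\tfrac{1-e^{-t}}{t}\ge\tfrac{1}{1+t}$ (equivalently $e^{t}\ge1+t$) and the monotonicity of $t\mapsto(1+t)^{-1}$ together with $T\le c$ yields
\[
V_\beta\;\ge\;\frac{1-e^{-T}}{T}\,W_\beta\;\ge\;\frac{1}{1+T}\,W_\beta\;\ge\;\frac{1}{1+c}\,W_\beta,
\]
and inverting gives $\Chi_\beta\le(1+c)W_\beta^{-1}$, i.e.\ the claimed bound, for all $0<\beta<\beta_c$; the case $\beta=0$ is immediate since the right-hand side is then $+\infty$. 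The only genuine difficulty is thus obtaining the sharp prefactor $1+c$ rather than the routine $e^{c}$, which is exactly what the correlation inequality supplies.
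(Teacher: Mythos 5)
Your proof is correct, but it takes a substantially more elaborate route than the paper's. The paper also writes $1/\Chi_\beta = \int_\beta^{\beta_c}\Chi_\lambda^{-2}\Chi_\lambda'\,\dif\lambda$ and substitutes \cref{thm:main_chi} to obtain
\[
\frac{1}{\Chi_\beta}\;\geq\;\int_\beta^{\beta_c}\frac{\dif\lambda}{3\lambda^2\nabla_\lambda^2}\; -\;\int_\beta^{\beta_c}\frac{\dif\lambda}{3\lambda^2\nabla_\lambda\Chi_\lambda},
\]
but then it simply bounds the error integral by $\frac{1}{\Chi_\beta}\int_\beta^{\beta_c}\frac{\dif\lambda}{3\lambda^2}=\frac{c}{\Chi_\beta}$, using $\nabla_\lambda\geq 1$ and $\Chi_\lambda\geq\Chi_\beta$ for $\lambda\geq\beta$; this produces the self-improving inequality $(1+c)/\Chi_\beta\geq W_\beta$, which rearranges immediately to the claim. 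You instead solve the linear ODE for $V_\beta=1/\Chi_\beta$ with an integrating factor and then invoke Chebyshev's correlation inequality (exploiting monotonicity of $\nabla_\lambda$) to sharpen the crude exponential prefactor $e^{c}$ down to $1+c$. Both arguments are valid, and yours is an interesting alternative, but notice that the route you dismissed as ``still cruder'' --- bounding $V_\lambda$ inside the integral before integrating --- is essentially the paper's approach: the paper just uses the better pointwise bound $V_\lambda\leq V_\beta$ (rather than $V_\lambda\leq 1$), which is enough to make the self-consistent rearrangement close with no loss, rendering both the integrating factor and the correlation inequality unnecessary.
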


Note that the $(1+(\beta_c-\beta)/(3\beta \beta_c))$ prefactor appearing here converges to $1$ as $\beta \uparrow \beta_c$.


\begin{proof}[Proof of \cref{cor:integrated} given \cref{thm:main_chi}]
Let $0\leq \beta < \beta_c$. Since $\Chi_{{\beta_c}}=\infty$ we have that
\begin{align*}
\frac{1}{\Chi_\beta}=\frac{1}{\Chi_\beta}-\frac{1}{\Chi_{\beta_c}} = -\int_\beta^{\beta_c} \frac{d}{d \lambda}  \frac{1}{\Chi_\lambda}\dif \lambda = \int_\beta^{\beta_c} \frac{1}{\Chi_\lambda^2} \frac{d \Chi_\lambda}{d \lambda}\dif \lambda.
\end{align*}
It follows from \cref{thm:main_chi} that
\begin{align*}
\frac{1}{\Chi_\beta}\geq  \int_\beta^{\beta_c} \frac{1}{3\lambda^2 \nabla_\lambda^2} \dif \lambda - \int_\beta^{\beta_c} \frac{1}{3\lambda^2 \nabla_\lambda \Chi_\lambda} \dif \lambda \geq \int_\beta^{\beta_c} \frac{1}{3\lambda^2 \nabla_\lambda^2} \dif \lambda - \frac{1}{\Chi_\beta} \int_\beta^{\beta_c} \frac{1}{3\lambda^2} \dif \lambda,
\end{align*}
where we used that $\Chi_\lambda$ is increasing in $\lambda$ and that $\nabla_\lambda \geq 1$ for every $\lambda \geq 0$ in the second inequality. The claim follows by rearranging this inequality.
\end{proof}


\subsection{Other exponents via the relative entropy method}
Classically, Barsky and Aizenman \cite{MR1127713} proved that \eqref{eq:mean_field_volume} and \eqref{eq:mean_field_density} hold under the triangle condition by proving a further partial differential inequality concerning the \emph{magnetization} $M_{\beta,h}=\E_\beta [1-e^{-h|K|}] \asymp h\sum_{n=1}^{\lceil 1/h \rceil} \P_\beta(|K|\geq n)$. Specifically, they proved that there exists a constant $C$ such that for each $r\geq 1$ there exists $\eps(r)>0$ such that
\begin{equation}
\label{eq:BarskyAizenmanPDI}
M_{\beta,h}-h\frac{\partial M_{\beta,h}}{\partial h} \geq \eps(r)\Bigl(1-C\nabla_\beta(r)\Bigr) 
M_{\beta,h}^2 \frac{\partial M_{\beta,h}}{\partial h}
- \frac{1}{\eps(r)} h M_{\beta,h} \frac{\partial M_{\beta,h}}{\partial h}
\end{equation}
for every $\beta_c /2 \leq \beta <\beta_c$ and $h\geq 0$; the upper bound of \eqref{eq:mean_field_volume} can be deduced from this partial differential inequality via an elementary (if non-obvious) calculation. This method suffers from the same quantitative shortcomings affecting Aizenman and Newman's analysis of the susceptibility and does not provide estimates of reasonable order when the triangle sum is unbounded but diverges slowly as $\beta \uparrow \beta_c$.

\medskip

Rather than following this strategy, we instead observe that the upper bounds of \eqref{eq:mean_field_volume} and \eqref{eq:mean_field_density} can each be deduced directly from the upper bound of \eqref{eq:mean_field_susceptibility} using the \emph{relative entropy method} pioneered in the recent work of Dewan and Muirhead~\cite{dewan2021upper}, which yields good quantitative control of all the relevant constants. This allows us to completely avoid proving any partial differential inequality of the form \eqref{eq:BarskyAizenmanPDI} and obtain versions of \eqref{eq:mean_field_volume} and \eqref{eq:mean_field_density} with good quantitative dependence on the value of the triangle diagram $\nabla_{p_c}$. More specifically, we adapt the methods of Dewan and Muirhead to prove the following general inequality.

\begin{thm}
\label{thm:DM_Chi}
Let $G=(V,E,J)$ be a connected, transitive weighted graph that is normalised so that $\sum_{e\in E^\rightarrow_o} J_e=1$. Then
\[
\P_{\beta_2}(|K|\geq n) 
 \leq  \left(\frac{2}{n} +\frac{4}{\beta_1}|\beta_2-\beta_1|^2\right) \Chi_{\beta_1}
\]
for every $\beta_2 \geq \beta_1 >0$ and $n\geq 0$.
\end{thm}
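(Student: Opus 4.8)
The plan is to treat the second inequality as routine and concentrate on the first. Indeed, $\P_{\beta_1}(|K|\ge n)\le \tfrac1n\E_{\beta_1}[|K|\wedge n]\le\tfrac1n\Chi_{\beta_1}$ by Markov's inequality, while $\sum_{k=1}^n\P_{\beta_1}(|K|\ge k)=\E_{\beta_1}[|K|\wedge n]\le\Chi_{\beta_1}$, so the second inequality follows from the first. To prove the first inequality I would reveal the cluster $K$ of the origin by a breadth‑first exploration that is \emph{stopped} as soon as $m:=\min\{n,\lceil \beta_1/(\beta_2-\beta_1)^2\rceil\}$ vertices have been found (or $K$ has been exhausted), and let $\mathcal F$ be the $\sigma$‑algebra it generates, so that $\{|K|\ge m\}\in\mathcal F$ and $\mathcal F$ is generated by the states of an at most countable set of edges: the discovered tree‑edges (all open) together with the examined boundary edges (all closed). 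Writing $p_{i,e}=1-e^{-\beta_i J_e}$, the Radon--Nikodym derivative $R:=\d\P_{\beta_2}|_{\mathcal F}/\d\P_{\beta_1}|_{\mathcal F}$ is then the explicit, convergent product
\[
R \;=\; \prod_{e\text{ open tree-edge}} \frac{1-e^{-\beta_2 J_e}}{1-e^{-\beta_1 J_e}}\;\cdot\; \prod_{e\text{ examined and closed}} e^{-(\beta_2-\beta_1)J_e},
\]
the second product converging because the total weight of examined edges is finite.

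\textbf{Change of measure.} Since $\{|K|\ge n\}\subseteq\{|K|\ge m\}\in\mathcal F$ and the elementary inequality $(r-2)^+\le(r-1)^2$ holds for all $r\ge0$, we get
\[
\P_{\beta_2}(|K|\ge n)\;\le\;\P_{\beta_2}(|K|\ge m)\;=\;\E_{\beta_1}\!\big[R\,\mathbf 1\{|K|\ge m\}\big]\;\le\;2\,\P_{\beta_1}(|K|\ge m)+\big(\E_{\beta_1}[R^2]-1\big),
\]
so the task reduces to bounding the $\chi^2$‑divergence $\E_{\beta_1}[R^2]-1$ (when $m<n$ one also needs $\P_{\beta_1}(|K|\ge m)\le\tfrac1m\E_{\beta_1}[|K|\wedge m]\le \tfrac{(\beta_2-\beta_1)^2}{\beta_1}\sum_{k=1}^n\P_{\beta_1}(|K|\ge k)$, which is immediate).

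\textbf{The key per‑edge estimate.} Exposing the examined edges one at a time, with $\mathcal H_{k-1}$ the history before the $k$‑th edge $e_k$ is examined, the state of $e_k$ under $\P_{\beta_1}$ is an independent $\mathrm{Bernoulli}(p_{1,e_k})$ variable, and the conditional second moment of the corresponding factor of $R$ is
\[
\E_{\beta_1}\!\Big[(\text{$k$-th factor})^2\,\Big|\,\mathcal H_{k-1}\Big]\;=\;\frac{p_{2,e_k}^2}{p_{1,e_k}}+\frac{(1-p_{2,e_k})^2}{1-p_{1,e_k}}\;=\;1+\frac{(p_{2,e_k}-p_{1,e_k})^2}{p_{1,e_k}(1-p_{1,e_k})},
\]
i.e.\ $1$ plus the Bernoulli $\chi^2$‑divergence $\chi^2(\mathrm{Bern}(p_{2,e_k})\,\|\,\mathrm{Bern}(p_{1,e_k}))$; the point is that the error here is \emph{quadratic} in $\beta_2-\beta_1$. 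Using $p_{2,e}-p_{1,e}=e^{-\beta_1 J_e}\big(1-e^{-(\beta_2-\beta_1)J_e}\big)\le e^{-\beta_1 J_e}(\beta_2-\beta_1)J_e$ together with $e^{\beta_1 J_e}-1\ge\beta_1 J_e$ gives the clean bound $\E_{\beta_1}[(\text{$k$-th factor})^2\mid\mathcal H_{k-1}]\le 1+\tfrac{(\beta_2-\beta_1)^2}{\beta_1}J_{e_k}$. Because each vertex of $K$ has total incident edge‑weight $1$ — this is where transitivity and the normalisation $\sum_{e\in E_o^\rightarrow}J_e=1$ are used — and each edge is examined at most once, the total weight $W$ of examined edges satisfies $W\le |K|\wedge m$. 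Telescoping $R^2=\prod_k(\text{$k$-th factor})^2$ (the cross terms in $\E_{\beta_1}[(R-1)^2]$ vanish since each factor has conditional mean $1$), substituting the per‑edge bound, and using that after truncation $R$ is bounded (so $R^2$ is a genuine $\P_{\beta_1}$‑submartingale and optional stopping applies) yields, with $c:=(\beta_2-\beta_1)^2/\beta_1$,
\[
\E_{\beta_1}[R^2]-1\;\le\; c\,\E_{\beta_1}\!\Big[R^2\!\!\sum_{e\text{ examined}}\!\!J_e\Big]\;\le\; c\,\E_{\beta_1}\!\big[R^2\,(|K|\wedge m)\big].
\]

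\textbf{The main obstacle.} The genuine difficulty — and the reason the exploration must be stopped at $m\le\lceil 1/c\rceil$ vertices rather than at $n$ — is that the naive bound $\E_{\beta_1}[R^2]\le\E_{\beta_1}[e^{cW}]$ is only useful when $cW$ is bounded, whereas $W$ can be as large as the truncation level; with $cm=O(1)$ this is harmless. The remaining work is to upgrade the last display to the sharp
\[
\E_{\beta_1}[R^2]-1\;\le\; 4c\,\E_{\beta_1}[|K|\wedge m]\;=\;4c\sum_{k=1}^m\P_{\beta_1}(|K|\ge k),
\]
which I expect to be the delicate step; I would do it by a further telescoping, controlling $\E_{\beta_1}[R^2(|K|\wedge m)]=\sum_{j\le m}\E_{\beta_1}[R^2\mathbf 1\{|K|\ge j\}]=\sum_{j\le m}\E_{\beta_2}[R\,\mathbf 1\{|K|\ge j\}]$ term by term (a bootstrap, using the already‑established $O(1)$ bound on $\E_{\beta_1}[R^2]$ when $cm=O(1)$). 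Once this is in hand, when $m=n$ the conclusion is immediate, and when $m=\lceil 1/c\rceil<n$ all surviving terms are of the form $c\sum_{k=1}^n\P_{\beta_1}(|K|\ge k)$ and are collected, the unneeded $2\P_{\beta_1}(|K|\ge n)\ge0$ on the right being simply dropped in. The per‑edge identity is a direct computation and the change‑of‑measure step is elementary; essentially all the real content lies in making this accumulation estimate quantitatively tight.
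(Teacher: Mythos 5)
Your approach is genuinely different from the paper's: the paper controls the \emph{relative entropy} (KL divergence) of the two measures restricted to the information revealed by a decision tree, and then applies the generalised Pinsker inequality $|\mu(A)-\nu(A)|^2\le 2D_{\mathrm{KL}}(\mu\,\|\,\nu)\max\{\mu(A),\nu(A)\}$. Because KL is \emph{additive} over the conditionally independent edge examinations, the per-edge bound accumulates directly to $D_{\mathrm{KL}}\le c\sum_e J_e\operatorname{Rev}_{\beta_1}(T,e)\le c\,\E_{\beta_1}[|K|\wedge n]$ with $c=(\beta_2-\beta_1)^2/\beta_1$, without any truncation below $n$, and the $\max\{\mu(A),\nu(A)\}=\P_{\beta_2}(|K|\ge n)$ factor in Pinsker gives the theorem by a one-line rearrangement (if $\P_{\beta_2}>2\P_{\beta_1}$ then $(\P_{\beta_2}/2)^2<c\,\E_{\beta_1}[|K|\wedge n]\,\P_{\beta_2}$). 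Your plan replaces KL with $\chi^2$-divergence; the per-edge identity, the clean conditional bound $1+cJ_e$ on the second moment, and the telescoping inequality $\E_{\beta_1}[R^2]-1\le c\,\E_{\beta_1}[W R^2]$ (which uses Jensen for the martingale $R_j$, a step you do not spell out but which is correct) are all fine.

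The genuine gap is exactly where you flag it: you never prove the claimed upgrade $\E_{\beta_1}[R^2]-1\le 4c\,\E_{\beta_1}[|K|\wedge m]$, only assert that a ``bootstrap'' and ``further telescoping'' should give it. From $\E_{\beta_1}[R^2]-1\le c\,\E_{\beta_1}[W R^2]$ the crude step $W\le m$ gives $\E_{\beta_1}[R^2]\le 1/(1-cm)$, i.e.\ $\chi^2=O(cm)$; but $cm$ can be much larger than $c\,\E_{\beta_1}[|K|\wedge m]$, and a bound by $O(cm)$ is not what the theorem requires (it would replace $\sum_{k\le n}\P_{\beta_1}(|K|\ge k)$ by the typically much larger $n$). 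What you actually need is $\E_{\beta_1}[W R^2]\le 4\,\E_{\beta_1}[W]$, i.e.\ that the $W$-size-biased average of $R^2$ is bounded, and this does not follow from the already-established $\E_{\beta_1}[R^2]=O(1)$: $W$ and $R^2$ are correlated, with no obvious sign, since $R$ reweights towards $\P_{\beta_2}$, which stochastically enlarges the cluster and hence changes the distribution of the stopped revealed weight. This is precisely the difficulty that the KL route sidesteps — KL accumulates linearly in the revealed weight without any self-improvement, whereas $\chi^2$ is multiplicative and forces both the truncation at $m\sim 1/c$ and the missing concentration step. As written the argument is incomplete, and I do not think the missing estimate is routine; the change to relative entropy in Dewan--Muirhead (and in the paper) is the load-bearing idea, not a cosmetic choice.
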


Taking $\beta_1=\beta_c-n^{-1/2}$ and $\beta_2=\beta_c$ in this inequality shows that the upper bound of \eqref{eq:mean_field_susceptibility} implies the upper bound of \eqref{eq:mean_field_volume}, while taking $\beta_1=\beta_c-\eps$, $\beta_2=\beta_c+\eps$ and $n\uparrow \infty$ shows that the upper bound of \eqref{eq:mean_field_susceptibility} implies the upper bound of \eqref{eq:mean_field_density}. The complementary fact that the upper bound of \eqref{eq:mean_field_volume} implies the upper bound of \eqref{eq:mean_field_susceptibility} was established in \cite[Theorem 1.1]{1901.10363}. 

\begin{remark}
The proof of \cref{thm:DM_Chi} will in fact establish the slightly stronger inequality
\begin{equation}
\label{eq:DM_Chi_strong}
\P_{\beta_2}(|K|\geq n) \leq 2\P_{\beta_1}(|K|\geq n) + \frac{4}{\beta_1}|\beta_2-\beta_1|^2 \sum_{k=1}^n \P_{\beta_1}(|K|\geq k),
\end{equation}
from which \cref{thm:DM_Chi} follows by Markov's inequality to the term $\P_{\beta_1}(|K|\geq n)$ and noting that $\sum_{k=1}^n \P_{\beta_1}(|K|\geq k) \leq \sum_{k=1}^\infty \P_{\beta_1}(|K|\geq k)=\Chi_{\beta_1}$.
Taking $\beta_1=\beta_c$, $\beta_2=\beta_c+\eps$, and $n=\lceil \eps^{-2} \rceil$ in this inequality we can also deduce directly that the upper bound of \eqref{eq:mean_field_volume} implies the upper bound of \eqref{eq:mean_field_density}. The same implication may also be proven using the \emph{extrapolation} technique of Aizenman and Fernandez \cite{MR857063} as discussed in \cite{MR1127713}.
\end{remark}

\begin{remark}
The bounds of \cref{thm:DM_Chi} are similar to, but quantitatively better than, those appearing in the work of Newman \cite{MR912497,MR869320}, which relies on differential inequalities obtained via large-deviations analysis of the \emph{fluctuation}
 $(1-p) \#\{$open edges in $K\}-p\#\{$closed edges touching $K\}$. One can sharpen Newman's analysis by using maximal inequalities instead of large-deviations estimates but the bounds one obtains this way are still not as strong as those of \cref{thm:DM_Chi}. 
As explained to us by Stephen Muirhead, it appears to be a rather general phenomenon that relative entropy methods give  `non-differential improvements' to estimates based on the analysis of the fluctuation.
\end{remark}






The following corollary is illustrative of what can be done with the new methods we introduce. The case $\alpha=0$ of the corollary recovers the classical fact the triangle condition implies mean-field critical behaviour as proven by Aizenman and Newman \cite{MR762034} and Barsky and Aizenman \cite{MR1127713}. 
We write $\preceq$ and $\succeq$ for inequalities holding to within constant positive multiplicative factors in a neighbourhood of the relevant limit point.

\begin{corollary}
\label{cor:logs}
Let $G$ be a connected, unimodular, transitive weighted graph and suppose that there exists $\alpha \geq 0$ and $A<\infty$ such that $\nabla_{\beta_c-\eps} \preceq (\log (1/\eps))^\alpha$ as $\eps \downarrow 0$. Then
\begingroup
\addtolength{\jot}{0.5em}
\begin{align}
\Chi_{\beta_c-\eps} &\preceq \Bigl(\log \frac{1}{\eps}\Bigr)^{2\alpha}\cdot\frac{1}{\eps} &\text{ as }\eps &\downarrow 0,
\label{logexponent:susceptibility}
\\
\P_{\beta_c}\left( |K| \geq n\right) &\preceq \hspace{0.13em} (\hspace{0.2em}\log n\hspace{0.15em})^{2\alpha}  \cdot \frac{1}{n^{1/2}} &\text{ as }n&\uparrow \infty,  \text{ and}
\label{logexponent:volume}\\
\P_{\beta_c+\eps}\left(|K|=\infty\right) &\preceq  \Bigl(\log \frac{1}{\eps}\Bigr)^{2\alpha}\cdot \eps&\text{ as }\eps &\downarrow 0.
\label{logexponent:theta}
\end{align}
\endgroup
In particular, mean-field critical behaviour holds to within polylogarithmic factors.
\end{corollary}

See \cite[Proposition 3.1]{aizenman1983renormalized} and \cite[Theorem 1.5.4]{MR2986656} for related results for the four-dimensional Ising model and self-avoiding walk.
The hypothesis $\nabla_{\beta_c-\eps} \preceq (\log (1/\eps))^\alpha$ is expected to hold with $\alpha>0$ for nearest-neighbour percolation on $\Z^d$ at the upper-critical dimension $d=6$, although proving this appears to be completely beyond the scope of existing methods. 
Indeed, it is believed that for percolation models at the upper-critical dimension there should exist non-zero exponents $\theta_\gamma$, $\theta_\delta$, and $\theta_\beta$ such that 
\begingroup
\begin{align}
\Chi_{\beta_c-\eps} &\asymp \Bigl(\log \frac{1}{\eps}\Bigr)^{\theta_\gamma}\cdot\frac{1}{\eps} &\text{ as }\eps &\downarrow 0,
\\
\P_{\beta_c}\left( |K| \geq n\right) &\asymp \hspace{0.13em} (\hspace{0.2em}\log n\hspace{0.15em})^{\hspace{0.13em}\theta_\delta}  \cdot \frac{1}{n^{1/2}} &\text{ as }n&\uparrow \infty,  \text{ and}
\\
\P_{\beta_c+\eps}\left(|K|=\infty\right) &\asymp  \Bigl(\log \frac{1}{\eps}\Bigr)^{\theta_\beta}\cdot \eps&\text{ as }\eps &\downarrow 0.
\end{align}
\endgroup
For nearest neighbour percolation on $\Z^d$ at the upper-critical dimension $d=6$, Essam, Gaunt, and Guttmann \cite{essam1978percolation} predicted that these asymptotics hold with $\theta_\gamma=\theta_\delta=\theta_\beta=2/7$. (Their notation is different to ours: their $\theta_\delta$ is $2\theta_\delta$ in our notation and their $\theta_\beta$ is $-\theta_\beta$ in our notation.) See \cite{MR3969983} and references therein for related rigorous results for weakly self-avoiding walk and the $\varphi^4$ model on hierarchical lattices.  Note that \cref{thm:DM_Chi} and its strengthened form stated in \eqref{eq:DM_Chi_strong} imply that these exponents must satisfy $\theta_\beta \leq \theta_\delta \leq \theta_\gamma$ if they are well-defined.  Combining the predictions of \cite{essam1978percolation} with some simple heuristic scaling theory calculations yields the prediction
\begin{equation}
\nabla_{\beta_c-\eps} \asymp \left(\log \frac{1}{\eps} \right)^{8/7} \text{ as $\eps \downarrow 0$}
\end{equation}
for nearest-neighbour percolation on $\Z^6$, so that we should not expect the bounds on logarithmic corrections given by \cref{cor:logs} to be sharp in this example.

\begin{proof}[Proof of \cref{cor:logs} given \cref{thm:main_chi,thm:DM_Chi}]
We may assume without loss of generality that $\sum_{e\in E^\rightarrow_o}J_e=1$.
Let $\alpha \geq 0$, $C<\infty$, and $\delta$ be such that $\nabla_{\beta_c-\eps} \leq C(\log(1/\eps))^\alpha$ for every $0\leq \eps \leq \delta$. We may assume that $\delta\leq 1/2$ and will write $\asymp$, $\preceq$, and $\succeq$ for equalities and inequalities holding to within positive multiplicative constants depending only on $\alpha$, $C$, and $\delta$. We have by \cref{cor:integrated} that
\begin{equation}
\label{eq:Chilog}
\Chi_\beta  \preceq 
\left[\int_\beta^{\beta_c} \left(\log\frac{1}{\beta_c-\lambda}\right)^{-2\alpha} \dif \lambda\right]^{-1}
 \preceq |\beta-\beta_c|^{-1} \left(\log \frac{1}{|\beta-\beta_c|}\right)^{2\alpha}
\end{equation}
for every $ \beta_c-\delta \leq \beta < \beta_c$, where the latter inequality follows by considering the contribution to the integral from $\lambda \in [\beta,\beta+(\beta_c-\beta)/2]$.
Substituting \eqref{eq:Chilog} into \cref{thm:DM_Chi} with $\beta_1=\beta_c-n^{-1/2}$ and $\beta_2=\beta_c$ yields that
\[
\P_{\beta_c}(|K|\geq n) \preceq n^{-1} \Chi_{\beta_c-n^{-1/2}} \preceq n^{-1/2} \left(\log n \right)^{2\alpha}
\]
for every $n\geq 1$. Similarly, substituting \eqref{eq:Chilog} into \cref{thm:DM_Chi} with $\beta_1=\beta_c-\eps$ and $\beta_2=\beta_c+\eps$ yields that
\[
\P_{\beta_c+\eps}(|K|\geq n) \preceq \left(n^{-1}+\eps^2\right) \eps^{-1} \left(\log \frac{1}{\eps}\right)^{2\alpha}
\]
for every sufficiently small $\eps>0$ and every $n\geq 0$, so that the claimed inequality \eqref{logexponent:theta} follows by taking $n\uparrow \infty$. This completes the proof.
\end{proof}




\subsection{About the proof}

 The differential inequality of \cref{thm:main_chi} will be deduced from a more fundamental estimate involving two diagrammatic sums $A_\beta$ and $B_\beta$ that are more complicated than the usual triangle diagram. 
 Let $G$ be a countable, transitive weighted graph, and for each $\beta \geq 0$ consider the diagrammatic sums
\begin{align}
A_\beta &=  \sum_{v,w,x,y \in V} T_\beta(o,w)T_\beta(o,v)T_\beta(w,x)T_\beta(v,x) T_\beta(v,y) T_\beta(y,x) \label{eq:Adef}
\intertext{and}
B_\beta &=  \sum_{v,w,x,y \in V} T_\beta(o,w)T_\beta(o,v)T_\beta(w,x)T_\beta(v,x) T_\beta(w,y) T_\beta(v,y), \label{eq:Bdef}
\end{align}
both of which belong to $[1,\infty]$. See \cref{fig:diagrams_intro} for graphical representations of these sums. When $G$ is unimodular, the mass-transport principle \eqref{eq:MTP} allows us to exchange the roles of $o$ and $v$ to write these sums more succinctly as
\begin{align}
A_\beta &=  \sum_{v,w,x,y \in V} T_\beta(o,v)T_\beta(v,w)T_\beta(w,x)T_\beta(o,x) T_\beta(o,y) T_\beta(y,x) 
\nonumber\\
&\hspace{8.5cm}=\sum_{x\in V} T_\beta(o,x)T_\beta^2(o,x) T_\beta^3(o,x)
\label{eq:Adefunimod}
\intertext{and}
B_\beta &=  \sum_{v,w,x,y \in V} T_\beta(o,v)T_\beta(v,w)T_\beta(o,x)T_\beta(x,w) T_\beta(o,y) T_\beta(y,w)=\sum_{w \in V} T_\beta^2(o,w)^3 \label{eq:Bdefunimod}
\end{align}
for each $\beta \geq 0$. The diagram $B_\beta$ also arises in Hara and Slade's lace expansion analysis of percolation, see in particular \cite[Lemma 5.9]{MR1043524}.
%
\medskip

We will deduce \cref{thm:main_chi} 
as an immediate corollary of the following two propositions. 

\begin{figure}
\includegraphics[width=\textwidth]{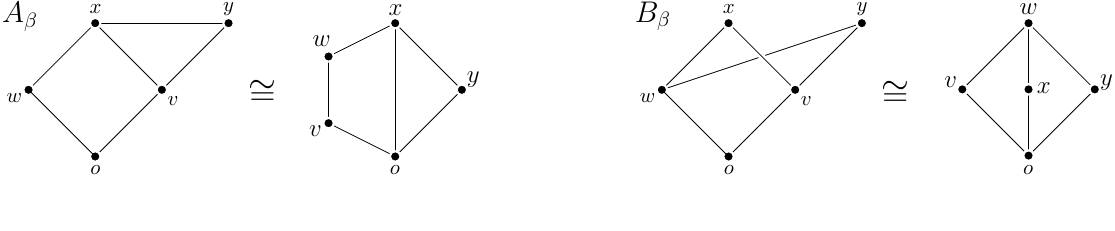}
\caption{The diagrammatic sums $A_\beta$ and $B_\beta$. In each case, we fix the origin $o$ and sum over every other vertex, with an edge of the diagram representing a copy of the two-point matrix $T_\beta$.
When $G$ is unimodular we can change which vertex of the diagram is pinned to the origin without changing the value of the corresponding sum. 
}
\label{fig:diagrams_intro}
\end{figure}

\begin{prop}
\label{prop:chi_diff_ineq}
Let $G=(V,E,J)$ be a connected, transitive weighted graph that is normalised so that $\sum_{e\in E^\rightarrow_o} J_e=1$. Then 
\begin{equation}
\label{eq:ABChi}
\frac{d \Chi_\beta}{d\beta} \geq \frac{\Chi_\beta(\Chi_\beta-\nabla_\beta)}{\beta^2(2A_\beta + B_\beta)}
\end{equation}
for every $0 < \beta < \beta_c$.
\end{prop}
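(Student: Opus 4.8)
The plan is to derive \eqref{eq:ABChi} from Russo's formula together with a ``surgery plus Cauchy--Schwarz'' argument, in which the diagrammatic sums $A_\beta$ and $B_\beta$ appear as the denominator of a Cauchy--Schwarz step; this is precisely what lets the error terms be absorbed into the bound rather than subtracted, and hence what avoids any smallness requirement on $\nabla_\beta$.

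\emph{A formula for the derivative.} Since $\Chi_\beta<\infty$ and each $T_\beta(o,x)$ is a smooth, increasing function of the configuration, differentiating $\Chi_\beta=\sum_x T_\beta(o,x)$ and applying Russo's formula in the $\beta$-parametrisation (where $\tfrac{d}{d\beta}(1-e^{-\beta J_e})=J_e e^{-\beta J_e}$) gives
\[
\frac{d\Chi_\beta}{d\beta}=\sum_{x\in V}\ \sum_{\vec e=(u,w)} J_{\vec e}\, e^{-\beta J_{\vec e}}\ \P_\beta\!\big(\vec e\text{ pivotal for }o\leftrightarrow x\big),
\]
the inner sum being over oriented edges, where $\vec e=(u,w)$ pivotal means that, off the edge $\{u,w\}$, the origin is connected to $u$ and $w$ is connected to $x$ via disjoint open sub-configurations, while $o$ is not connected to $x$. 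Summing over $x$ first exhibits $\tfrac{d\Chi_\beta}{d\beta}$ as a sum of nonnegative terms indexed by $(x,u,w)$ (together with an internal summation variable once the connections are expanded).

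\emph{Pointwise estimates.} For the piece feeding the numerator of the Cauchy--Schwarz I use the Harris--FKG inequality, $\P_\beta(o\leftrightarrow u,\, w\leftrightarrow x)\ge T_\beta(o,u)T_\beta(w,x)$, together with a shared-vertex bound of Aizenman--Newman type: the probability that additionally $o\leftrightarrow x$ off $\{u,w\}$, or that the two connections fail to be disjoint, or that $\{u,w\}$ is used, is at most a fixed sum over a common vertex of four two-point functions. For the piece feeding the denominator I use the tree-graph/BK inequality to bound the pivotal probability above by products of two-point functions. Finally, the passage from edge weights to two-point functions, via $1-e^{-\beta J_{ab}}\le T_\beta(a,b)$ and the elementary inequality $\lambda J\le e^{\lambda J}-1$ (equivalently $J e^{-\beta J}\le\beta^{-1}(1-e^{-\beta J})\le\beta^{-1}$), is applied twice and is the source of the factor $\beta^2$ in \eqref{eq:ABChi}.

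\emph{The Cauchy--Schwarz comparison.} Writing $\tfrac{d\Chi_\beta}{d\beta}=\sum_i s_i$ with $s_i\ge0$, Cauchy--Schwarz gives $\tfrac{d\Chi_\beta}{d\beta}\ge\big(\sum_i s_i r_i\big)^2/\big(\sum_i s_i r_i^2\big)$ for any positive ratios $r_i$. The ratios are chosen so that: using the upper bounds above, $\sum_i s_i r_i^2\le\beta^2(2A_\beta+B_\beta)$, the two summands reflecting the two combinatorially distinct ways the ``squared'' pivotal structure glues to the extra copy of the two-point function supplied by $r_i^2$, and the factor $2$ on $A_\beta$ coming from the two relative orientations of the shared edge; and, using the lower bounds above together with the identity $\nabla_\beta=\sum_x T_\beta(o,x)T_\beta^2(o,x)$, one has $\sum_i s_i r_i\ge\sqrt{\Chi_\beta(\Chi_\beta-\nabla_\beta)}$, one free vertex summation contributing a factor $\Chi_\beta$ and another contributing $\Chi_\beta-\nabla_\beta$ (the $-\nabla_\beta$ being exactly the shared-vertex correction). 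Combining the two estimates yields \eqref{eq:ABChi}.

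\emph{The main obstacle} is the choice of the ratio family $r_i$ in the last step: one must arrange that the error sum $\sum_i s_i r_i^2$ telescopes exactly to $\beta^2(2A_\beta+B_\beta)$ --- rather than to a larger diagram --- while simultaneously keeping $\sum_i s_i r_i$ as large as $\sqrt{\Chi_\beta(\Chi_\beta-\nabla_\beta)}$. The reason this is worth the effort, and the improvement over the classical Aizenman--Newman argument, is that the shared-vertex corrections, which individually can be of the same order as the main term once $\nabla_\beta$ is large, are here placed in the denominator instead of subtracted, so that no upper bound on $\nabla_\beta$ is needed. Everything else --- Russo's formula, Harris--FKG, the tree-graph/BK estimates, the Aizenman--Newman shared-vertex bound, the edge-to-two-point conversions, and the evaluation of the vertex sums using transitivity and $\sum_{w\sim u}J_{uw}=1$ --- is standard.
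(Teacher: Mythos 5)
Your general ingredients (Russo's formula, BK/tree-graph bounds, a Cauchy--Schwarz step that places the error diagrams in the denominator rather than subtracting them) are the right ones, and your heuristic for why the argument avoids any smallness condition on $\nabla_\beta$ is the correct one. But the route you sketch is not the route the paper takes, and as written it has a genuine gap at exactly the step you flag yourself: the choice of the ratio family $r_i$. You have not specified the $r_i$, and without that the entire argument is a wish rather than a proof. In particular, the asserted lower bound $\sum_i s_i r_i \ge \sqrt{\smash{\Chi_\beta(\Chi_\beta-\nabla_\beta)}}$ is structurally problematic: diagrammatic sums produce polynomial expressions in $\Chi_\beta$, $\nabla_\beta$, $A_\beta$, $B_\beta$, not square roots, and achieving that bound would in any case require a pointwise lower bound on the pivotal probabilities $s_i$. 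That is exactly what your ``Harris--FKG plus shared-vertex correction'' plan is supposed to deliver, but the pivotality event is not increasing (it forces $o\not\leftrightarrow x$ and the edge to be absent), so FKG does not apply to it directly; you would need to decompose the pivotal event, control several error terms, and verify that the corrections do not swallow the main term --- none of which is carried out, and none of which is needed in the paper's argument.

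The paper avoids the problem of lower-bounding pivotal probabilities entirely. It writes $\tfrac{d\Chi_\beta}{d\beta}\ge \E_\beta[\Phi_\beta(K)]$, where $\Phi_\beta(S)=\sum_{e\in\partial_E^\rightarrow S}\sum_{v\notin S}J_e\,\P_\beta(e^+\leftrightarrow v\text{ off }S)$, and then (Lemma~\ref{lem:Psi_general}) proves a deterministic lower bound $\Phi_\beta(S)\ge \bigl(\Chi_\beta|S|-\sum_{u,v\in S}T_\beta(u,v)\bigr)^2\big/\bigl(\beta^2\Chi_\beta\sum_{u,v,w\in S}T_\beta(u,v)T_\beta(u,w)\bigr)$ valid for every fixed finite $S$. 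The proof of that lemma does not try to bound pivotal probabilities from below; instead it runs BK in the \emph{opposite} direction to bound $\sum_{u\in S,w\notin S}T_\beta(u,w)$ from above by $\beta\langle T_\beta\mathbbm{1}_S,f\rangle$ for a suitable function $f$ with $\|f\|_1=\Phi_\beta(S)$, and then applies Cauchy--Schwarz and H\"older to $\langle T_\beta\mathbbm{1}_S,f\rangle$ and solves for $\Phi_\beta(S)$. This is a genuinely different mechanism from what you propose. The second Cauchy--Schwarz is then the elementary $\E[X^2/Y]\ge \E[X]^2/\E[Y]$ applied with $X=\Chi_\beta|K|-\sum_{x,y\in K}T_\beta(x,y)$ and $Y=\sum_{a,b,c\in K}T_\beta(a,b)T_\beta(a,c)$, after which the tree-graph inequalities give $\E_\beta[X]\ge\Chi_\beta(\Chi_\beta-\nabla_\beta)$ and $\E_\beta[Y]\le(2A_\beta+B_\beta)\Chi_\beta$. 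So the two applications of Cauchy--Schwarz live at two different levels (one pointwise in the cluster $S$, one averaged over the randomness of $K$); your single Cauchy--Schwarz over a summation index $(x,u,w)$ does not reproduce that structure. If you want to fill the gap, the thing to prove is precisely the deterministic inequality of Lemma~\ref{lem:Psi_general}.
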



\begin{prop}
\label{prop:triangle_comparison}
Let $G=(V,E,J)$ be a connected, unimodular, transitive weighted graph. Then 
$B_\beta \leq A_\beta \leq \nabla_\beta^{2}$ 
for every $\beta \geq 0$.
\end{prop}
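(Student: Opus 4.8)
The plan is to pass to the unimodular identities \eqref{eq:Adefunimod} and \eqref{eq:Bdefunimod}, so that $A_\beta=\sum_{x}T_\beta(o,x)T_\beta^2(o,x)T_\beta^3(o,x)$, $B_\beta=\sum_{x}T_\beta^2(o,x)^3$ and $\nabla_\beta=\sum_x T_\beta(o,x)T_\beta^2(o,x)=T_\beta^3(o,o)$. Both claimed inequalities are trivial when their right-hand side is infinite, and since each $T_\beta^k(o,x)$ increases to $T_{\beta_c}^k(o,x)$ as $\beta\uparrow\beta_c$ (monotone coupling and monotone convergence), it is enough to treat $0\le\beta<\beta_c$; there $T_\beta$ is a bounded operator on $\ell^2(V)$ — Schur's test gives $\|T_\beta\|\le\Chi_\beta<\infty$ — and is positive semidefinite, since $\mathbf 1(x\leftrightarrow y)=\sum_C\mathbf 1_C(x)\mathbf 1_C(y)$ exhibits $T_\beta$ as an average of the rank-one projections $\mathbf 1_C\mathbf 1_C^{*}$ over the clusters $C$ of the configuration.

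I would first prove $A_\beta\le\nabla_\beta^2$: from $T_\beta\succeq0$ we get $T_\beta^3\succeq0$, so Cauchy--Schwarz applied to the $2\times2$ principal submatrix on $\{o,x\}$, together with transitivity, gives $T_\beta^3(o,x)\le T_\beta^3(o,o)=\nabla_\beta$ for every $x$; hence $\nabla_\beta^2-A_\beta=\sum_x T_\beta(o,x)T_\beta^2(o,x)\bigl(\nabla_\beta-T_\beta^3(o,x)\bigr)\ge0$.

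For $B_\beta\le A_\beta$ — the step where a diagrammatic comparison is needed — the idea is to realise $A_\beta-B_\beta$ as a diagonal entry of a product of two positive operators. Let $P$ be the matrix with entries $P(x,y)=T_\beta(x,y)T_\beta^3(x,y)-T_\beta^2(x,y)^2$; it is symmetric, $\Aut(G)$-invariant, and has absolutely summable rows (bounded by $2T_\beta^4(o,o)<\infty$), hence is a bounded operator by Schur's test, and a direct computation from the forms above gives $A_\beta-B_\beta=\sum_x T_\beta^2(o,x)P(o,x)=(P\,T_\beta^2)(o,o)$. The crucial fact is $P\succeq0$: writing $T_\beta=S^2$ with $S=T_\beta^{1/2}\succeq0$ and setting $a_x:=S\delta_x$ and $c_x:=S^3\delta_x$, one has $T_\beta(x,y)=\langle a_x,a_y\rangle$, $T_\beta^3(x,y)=\langle c_x,c_y\rangle$ and $T_\beta^2(x,y)=\langle a_x,c_y\rangle=\langle c_x,a_y\rangle$, whence in $\ell^2(V)\otimes\ell^2(V)$
\[
P(x,y)=\bigl\langle a_x\otimes c_x,\,(I-\Sigma)(a_y\otimes c_y)\bigr\rangle=2\bigl\langle P_-(a_x\otimes c_x),\,P_-(a_y\otimes c_y)\bigr\rangle,
\]
where $\Sigma$ is the tensor flip and $P_-=\tfrac12(I-\Sigma)$ is the orthogonal projection onto antisymmetric tensors; thus $P$ is a Gram matrix, so positive semidefinite. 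Finally, unimodularity makes the site trace $Q\mapsto Q(o,o)$ cyclic on $\Aut(G)$-invariant operators — this is the mass-transport principle \eqref{eq:MTP} applied to $F(u,v)=Q(u,v)R(v,u)$, yielding $(QR)(o,o)=(RQ)(o,o)$ — so, using $T_\beta^2=T_\beta\cdot T_\beta$ and self-adjointness,
\[
A_\beta-B_\beta=(P\,T_\beta^2)(o,o)=\bigl(P^{1/2}T_\beta^2P^{1/2}\bigr)(o,o)=\|T_\beta P^{1/2}\delta_o\|^2\ge 0 .
\]

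The main obstacle, as I see it, is the identity $A_\beta-B_\beta=(P\,T_\beta^2)(o,o)$ taken together with its two ingredients: the positivity of the correction operator $P$, which I obtain above via the antisymmetrizer $P_-$ and which is where the particular combinatorial shape of $A_\beta$ and $B_\beta$ is used, and the cyclicity of the site trace, which is exactly the point at which unimodularity enters. The remaining issues — that $P^{1/2}$ again lies in the algebra of $\Aut(G)$-invariant operators, the absolute summability required to apply \eqref{eq:MTP} to $F(u,v)=P^{1/2}(u,v)\,(P^{1/2}T_\beta^2)(v,u)$, and the limiting argument $\beta\uparrow\beta_c$ — are routine given the finiteness of $\Chi_\beta$, $\nabla_\beta$ and $T_\beta^4(o,o)$ for $\beta<\beta_c$.
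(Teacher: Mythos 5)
Your proof is correct, and for the key inequality $B_\beta \le A_\beta$ it takes a genuinely different route from the paper. Both you and the paper reduce matters to showing that the Hadamard-product matrix $P = T_\beta \circ T_\beta^3 - T_\beta^2\circ T_\beta^2$ is positive semidefinite and then use the unimodular trace $Q\mapsto Q(o,o)$ together with its cyclicity (the signed mass-transport principle) to conclude $\Tr(P\,T_\beta^2)\ge 0$; your one-line computation $\Tr(P^{1/2}T_\beta^2 P^{1/2})(o,o)=\|T_\beta P^{1/2}\delta_o\|^2$ is exactly the paper's Fej\'er inequality (\cref{thm:Fejer}) inlined. Where you diverge is in \emph{how} you establish $P\succeq 0$. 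The paper deduces it from Ando's theorem $(ST)^{1/2}\circ (ST)^{1/2}\le S\circ T$ for commuting positive operators, which it proves via a block-matrix congruence and the Schur product theorem. You instead exhibit $P$ directly as a Gram matrix: writing $a_x=T_\beta^{1/2}\delta_x$, $c_x=T_\beta^{3/2}\delta_x$ and $\Sigma$ for the tensor flip on $\ell^2(V)\otimes\ell^2(V)$, you have $P(x,y)=\langle a_x\otimes c_x,(I-\Sigma)(a_y\otimes c_y)\rangle = 2\langle P_-(a_x\otimes c_x),P_-(a_y\otimes c_y)\rangle$, and since $P_-=\tfrac12(I-\Sigma)$ is an orthogonal projection this is manifestly a Gram matrix. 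This is shorter and more self-contained: it avoids both the Schur product theorem and the block-matrix machinery, and the combinatorics of the diagrams $A_\beta,B_\beta$ surfaces cleanly in the three inner-product identities for $T_\beta$, $T_\beta^2$, $T_\beta^3$. Note also that the same antisymmetrizer computation with $a_x=S^{1/2}\delta_x$, $c_x=T^{1/2}\delta_x$ in fact reproves Ando's theorem in the commuting case, so you lose no generality relative to the paper. What the paper's route buys, by contrast, is a general-purpose toolkit (block-matrix positivity criteria, the Schur product theorem over infinite index sets) that the author advertises as potentially useful beyond this one inequality. The remaining bookkeeping you flag as routine — left-continuity as $\beta\uparrow\beta_c$, boundedness from sharpness, $P^{1/2}\in\cA(G)$, summability for the signed MTP — is indeed handled in the paper by the same observations (\cref{lem:square_roots_in_A}, \cref{prop:trace_properties}), so there is no gap.
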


Note that \cref{prop:chi_diff_ineq} does \emph{not} require unimodularity. 

\medskip

The inequality $A_\beta \leq \nabla_\beta^2$ of \cref{prop:triangle_comparison} follows easily from the fact that $T_\beta$ is positive definite \cite[Lemma 3.3]{MR762034} and hence that the maximal entries of $T_\beta^3$ lie on its diagonal. The inequality $B_\beta\leq A_\beta$ holds for rather more subtle reasons, but is also a consequence of $T_\beta$ being positive definite.
For Euclidean and hierarchical lattices this inequality admits a relatively straightforward proof by Fourier analysis, which is given at the end of \cref{sec:mainproof}. The general proof of \cref{prop:triangle_comparison} uses some interesting and (in our view) rather obscure facts about positive definite matrices; since this is somewhat tangential to the rest of the paper we defer the full proof, along with the exposition of the relevant linear-algebraic concepts, to \Cref{sec:linear_algebra}. We are optimistic that the techniques developed in this appendix may
 find further applications to probability theory and statistical physics in the future, perhaps to the problem of implementing the lace expansion \cite{MR2239599} in non-Euclidean settings.

\subsection{Applications to the hierarchical lattice}
\label{subsec:hierarchical_intro}

We now describe the applications of our results to long-range percolation on the hierarchical lattice. We begin by defining the  model.
Let $d\geq 1$, $L\geq 2$, and let $\mathbb{T}^d_L=(\Z/L\Z)^d$ be the discrete $d$-dimensional torus of side length $L$. 
We define the \textbf{hierarchical lattice} $\mathbbm{H}^d_L$ to be the countable abelian group $\bigoplus_{i=1}^\infty \mathbb{T}^d_L = \{x =(x_1,x_2,\ldots) \in (\mathbb{T}^d_L)^\N : x_i =0$ for all but finitely many $i\geq 0\}$ together with the group-invariant ultrametric
\[\|y-x\| = 
\begin{cases} 0 & x=y\\
L^{h(x,y)} & x \neq y
\end{cases} \qquad \text{ where }h(x,y)=\max\{i \geq 1: x_i \neq y_i\}.
\]
A function $J: \bbH^d_L \to [0,\infty)$ is said to be \textbf{radially symmetric} if $J(x)$ can be expressed as a function of $\|x\|$ and is said to be \textbf{integrable} if $\sum_{x\in \bbH^d_L} J(x)<\infty$. For example, writing $\langle x \rangle = 1 \vee \|x\|$ for each $x\in \bbH^d_L$, the function
$J(x)=\langle x\rangle^{-d-\alpha}$ is radially symmetric and integrable whenever $\alpha$ is positive. 
Given a radially symmetric, integrable function $J:\bbH^d_L\to [0,\infty)$, we define a transitive weighted graph with vertex set $\bbH^d_L$, edge set $\{\{x,y\}:x,y\in \bbH^d_L$, $ J(x-y)>0\}$, and weights given by $J(\{x,y\})=J(x-y)$; percolation on the resulting weighted graph is referred to as \emph{long-range percolation on the hierarchical lattice}. This model has the convenient feature that for each $n\geq 1$ the ultrametric ball $\Lambda_n:=\{x\in \bbH^d_L: \|x\|\leq L^n\}$ has the group structure of the torus $\bigoplus_{i=1}^n \mathbb{T}^d_L$ and induces a weighted subgraph of the hierarchical lattice that is itself transitive: there is no distinction between free and periodic boundary conditions in hierarchical models.

\medskip

The principal result of our earlier work \cite{hutchcroft2021critical} states that if $J:\bbH^d_L\to[0,\infty)$ is an integrable, radially symmetric function satisfying $c\|x\|^{-d-\alpha} \leq J(x)\leq C\|x\|^{-d-\alpha}$ for every $x\in \bbH^d_L\setminus \{0\}$ for some $0<\alpha <d$ and some positive constants $c$ and $C$ then there exist positive constants $a$ and $A$ such that
\begin{equation}
\label{eq:hierarchical_two_point}
a \langle x-y\rangle^{-d+\alpha} \leq \P_{\beta_c}(x\leftrightarrow y) \leq A \langle x-y \rangle^{-d+\alpha}
\end{equation}
for every $x,y\in \bbH^d_L$. It follows from this result that the model satisfies the triangle condition if and only if $\alpha < d/3$ \cite[Corollary 1.4]{hutchcroft2021critical}, and indeed it is proven in \cite[Corollary 1.5]{hutchcroft2021critical} that the model does \emph{not} have mean-field critical exponents when $\alpha >d/3$. Note that the simple expression for the two-point function \eqref{eq:hierarchical_two_point} therefore holds both inside and outside of the mean-field regime.

\medskip

 The techniques we develop in this paper can be used to prove that mean-field critical behaviour holds to within polylogarithmic factors in the upper-critical case $\alpha=d/3$.

\begin{thm}
\label{thm:d/3}
Let $J:\mathbb{H}^d_L \to [0,\infty)$ be a radially symmetric, integrable function, let $0<\alpha<d$, and suppose that there exist constants $c$ and $C$ such that $c \|x\|^{-d-\alpha} \leq J(x) \leq C\|x\|^{-d-\alpha}$ for every $x\in \mathbb{H}^d_L \setminus \{0\}$. If $\alpha=d/3$ then
\begingroup
\begin{align}
\Chi_{\beta_c-\eps} &\preceq \Bigl(\log \frac{1}{\eps}\Bigr)^{2}\cdot\frac{1}{\eps} &\text{ as }\eps &\downarrow 0,
\\
\P_{\beta_c}\left( |K| \geq n\right) &\preceq \hspace{0.13em} (\hspace{0.2em}\log n\hspace{0.15em})^{2}  \cdot \frac{1}{n^{1/2}} &\text{ as }n&\uparrow \infty,  \text{ and}
\\
\P_{\beta_c+\eps}\left(|K|=\infty\right) &\preceq  \Bigl(\log \frac{1}{\eps}\Bigr)^{2}\cdot \eps&\text{ as }\eps &\downarrow 0.
\end{align}
\endgroup
In particular, mean-field critical behaviour holds to within polylogarithmic factors.
\end{thm}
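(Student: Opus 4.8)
The plan is to deduce \cref{thm:d/3} as an instance of \cref{cor:logs}. The hierarchical lattice equipped with a weight function satisfying the stated bounds is a connected, transitive weighted Cayley graph of the abelian group $\bigoplus_{i\geq 1}\mathbb{T}^d_L$, hence unimodular, so \cref{cor:logs} applies to it. Since the polylogarithmic powers in \cref{cor:logs} are exactly twice the exponent in its hypothesis, it suffices to prove that in the case $\alpha=d/3$ the triangle diagram diverges at most logarithmically, i.e.\
\[
\nabla_{\beta_c-\eps}\ \preceq\ \log(1/\eps)\qquad\text{as }\eps\downarrow 0;
\]
feeding this into \cref{cor:logs} (equivalently, into \cref{cor:integrated} for the susceptibility and \cref{thm:DM_Chi} for the volume and density estimates) then produces the three displayed inequalities with the exponent $2$.

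To prove the triangle bound I would first extract from the critical two‑point estimate \eqref{eq:hierarchical_two_point} the two consequences that drive the computation. When $\alpha=d/3$ we have $T_{\beta_c}(o,x)\asymp\langle x\rangle^{-2d/3}$, and a routine calculation using the ultrametric ball volumes $|\Lambda_n\setminus\Lambda_{n-1}|\asymp L^{dn}$ gives $T_{\beta_c}^2(o,x)\preceq\langle x\rangle^{-d/3}$; in particular $\sum_x T_{\beta_c}(o,x)\,T_{\beta_c}^2(o,x)\asymp\sum_x\langle x\rangle^{-d}$, which is precisely the borderline divergence that makes $d/3$ the upper‑critical exponent. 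Consequently, for any radius $\rho\geq 1$,
\[
\sum_{\|x\|,\|y\|\le\rho}T_{\beta_c}(o,x)\,T_{\beta_c}(x,y)\,T_{\beta_c}(y,o)\ \le\ \sum_{\|x\|\le\rho}T_{\beta_c}(o,x)\,T_{\beta_c}^2(o,x)\ \preceq\ \sum_{\|x\|\le\rho}\langle x\rangle^{-d}\ \asymp\ \log\rho,
\]
and by monotonicity of the two‑point function in $\beta$ the same bound holds with $\beta_c$ replaced by $\beta_c-\eps$. Splitting the sum defining $\nabla_{\beta_c-\eps}$ according to whether $\max\{\|x\|,\|y\|\}\le\rho$, and using the $x\leftrightarrow y$ symmetry of the diagram together with the termwise bound $T_{\beta_c-\eps}(x,y)T_{\beta_c-\eps}(y,o)\le T_{\beta_c}(x,y)T_{\beta_c}(y,o)$ (then summing over $y$) to handle the complementary region, I obtain
\[
\nabla_{\beta_c-\eps}\ \preceq\ \log\rho\ +\ \sum_{\|x\|>\rho}T_{\beta_c-\eps}(o,x)\,T_{\beta_c}^2(o,x)\ \preceq\ \log\rho\ +\ \rho^{-d/3}\,\Chi_{\beta_c-\eps},
\]
where the last step uses $T_{\beta_c}^2(o,x)\preceq\langle x\rangle^{-d/3}\le\rho^{-d/3}$ for $\|x\|>\rho$ and $\sum_x T_{\beta_c-\eps}(o,x)=\Chi_{\beta_c-\eps}$.

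At this point the whole theorem reduces to a single crude input: a polynomial a priori bound $\Chi_{\beta_c-\eps}\preceq\eps^{-C}$ for some constant $C$. Indeed, taking $\rho=\rho(\eps)=\eps^{-3C/d}$ above makes the second term $O(1)$ while keeping $\log\rho(\eps)\asymp\log(1/\eps)$, which is the desired triangle bound. Establishing this a priori bound is, in my view, the main obstacle: it is equivalent to showing that the correlation length of the model grows at most polynomially in $(\beta_c-\beta)^{-1}$ — far weaker than the mean‑field statement we are ultimately after, but still requiring genuine control of the off‑critical model, which is not provided by \eqref{eq:hierarchical_two_point} nor (as far as I can see) by the differential inequality alone. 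I would attack it via the renormalisation structure of $\mathbb{H}^d_L$: each ball $\Lambda_n$ is itself a transitive weighted graph, $\Lambda_{n+1}$ is built from $L^d$ copies of $\Lambda_n$ joined by edges of scale $L^{n+1}$, and the finite‑volume susceptibilities $\Chi^{(n)}_\beta:=\sum_{\|x\|\le L^n}T_\beta(o,x)$ — which increase to $\Chi_\beta$ — satisfy a recursion across scales forcing them to saturate by scale $n\asymp\log(1/(\beta_c-\beta))$ once $\beta<\beta_c$; since $\Chi^{(n)}_{\beta_c}\asymp L^{dn/3}$ by \eqref{eq:hierarchical_two_point}, this caps $\Chi_{\beta_c-\eps}$ polynomially. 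This step should be amenable to the scale‑by‑scale techniques of \cite{hutchcroft2021critical}. Once it is in hand, the triangle bound $\nabla_{\beta_c-\eps}\preceq\log(1/\eps)$ follows as above and \cref{cor:logs} completes the proof.
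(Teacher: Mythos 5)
Your reduction is on the right track, but there is a genuine gap: the a priori polynomial bound $\Chi_{\beta_c-\eps}\preceq\eps^{-C}$ that your argument hinges on is not available from \eqref{eq:hierarchical_two_point} or any other cited input, and the renormalisation recursion you sketch for it is too vague to verify. The paper itself flags this obstacle explicitly right after the statement of \cref{thm:d/3}, observing that the theorem is \emph{not} an instance of \cref{cor:logs} precisely because the growth of $\nabla_{\beta_c-\eps}$ is not a priori controlled. Instead, the paper's proof (Section 5) sidesteps the infinite-volume triangle entirely: it applies the differential inequality of \cref{thm:main_chi} to the finite transitive balls $\Lambda_n$, where monotonicity gives $\nabla_{n,\beta}\le\nabla_{n,\beta_c}\preceq n+1$ directly from the critical two-point estimate; it establishes $\Chi_\beta\asymp L^{\alpha n}$ for $\beta$ between the pseudocritical points $\beta_n$ defined via the Duminil-Copin--Tassion quantity $\phi_\beta(\Lambda_n,0)$; and it integrates the finite-volume differential inequality to obtain $|\beta_c-\beta_n|\preceq (n+1)^2 L^{-\alpha n}$, which is exactly the polynomial correlation-length bound your sketch hoped to extract from a recursion. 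So the heart of the proof — the correlation-length estimate of \cref{prop:correlation_length} — is what your proposal takes as a black box.

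That said, your decomposition contains a useful observation you did not exploit: taking $\rho=\Chi_\beta^{3/d}$ in your split gives the self-referential bound $\nabla_\beta\preceq 1+\log\Chi_\beta$ for all $0\le\beta<\beta_c$, with no a priori input needed. Fed into \cref{thm:main_chi} this yields $\frac{d\Chi_\beta}{d\beta}\succeq\Chi_\beta^2/(1+\log\Chi_\beta)^2$ once $\Chi_\beta$ is large and $\beta$ is bounded away from $0$, and since $H(t):=\int^t (1+\log s)^2 s^{-2}\dif s$ has $H(\infty)-H(t)\asymp(\log t)^2/t$, integrating from $\beta$ to $\beta_c$ gives $(\log\Chi_\beta)^2/\Chi_\beta\succeq\beta_c-\beta$ and hence $\Chi_{\beta_c-\eps}\preceq\eps^{-1}(\log 1/\eps)^2$; the volume and density bounds then follow from \cref{thm:DM_Chi} as usual. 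This would salvage your route without the renormalisation input, but notice it does not actually go through \cref{cor:logs} — the logarithmic triangle bound is established self-consistently along the way rather than in advance — so you would need to reorganise the argument accordingly. This self-consistent version is a legitimate alternative to the paper's finite-volume correlation-length argument, though the two proofs share the same essential inputs (the differential inequality of \cref{thm:main_chi} and the critical two-point estimate).
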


We are not aware of heuristic work giving predictions for the true exponents in the logarithmic corrections appearing here, but do not expect the bounds given by \cref{thm:d/3} to be sharp.
Note that \cref{thm:d/3} is not an instance of \cref{cor:logs} since the results of \cite{hutchcroft2021critical} do not \emph{a priori} give any control of the rate of growth of the near-critical triangle $\nabla_{\beta_c-\eps}$ when $\alpha = d/3$. Related results stating roughly that the exponents take values close to their mean-field values when $\alpha$ is slightly larger than $d/3$ are given in \cref{thm:>d/3}.

\section{Derivation of the main differential inequality}
\label{sec:mainproof}

In this section we prove \cref{prop:chi_diff_ineq} and the Euclidean and hierarchical cases of \cref{prop:triangle_comparison}, yielding a proof of \cref{thm:main_chi} in this case. The complete proof of \cref{thm:main_chi} for general transitive weighted graphs is deferred to \Cref{sec:linear_algebra}. 

\medskip

We begin with \cref{prop:chi_diff_ineq}. 
Our proof uses \textbf{Russo's formula} \cite[Chapter 2.4]{grimmett2010percolation}, which states in our context that if $G=(V,E,J)$ is a countable weighted graph and $A \subseteq \{0,1\}^E$ is an increasing event depending on at most finitely many edges then
\[
\frac{d}{d\beta} \P_\beta(A) = \sum_{e\in E} J_e e^{-\beta J_e} \P_\beta(e \text{ is pivotal for $A$}) = \sum_{e\in E} J_e  \P_\beta(e \text{ is closed pivotal for $A$}).
\]
Here we recall that a set $A\subseteq \{0,1\}^E$ is said to be \textbf{increasing} if $(\omega \in A)\Rightarrow (\omega'\in A)$ for every $\omega,\omega' \in \{0,1\}^E$ such that $\omega'(e) \geq \omega(e)$ for every $e\in E$, and that given a percolation configuration $\omega$ and an increasing event $A$, an edge $e\in E$ is said to be \textbf{pivotal} if $\omega \cup \{e\} \in A$ and $\omega \setminus \{e\} \notin A$; we say that an edge is \textbf{closed pivotal} for $A$ if it is closed and pivotal for $A$. More generally \cite[Equation 2.28]{grimmett2010percolation}, if $A$ is an increasing event that may depend on infinitely many edges, we have the inequality
\[
\lrDini{\beta} \P_\beta(A) \geq \sum_{e\in E} J_e e^{-\beta J_e} \P_\beta(e \text{ is pivotal for $A$}) = \sum_{e\in E} J_e  \P_\beta(e \text{ is closed pivotal for $A$})
\]
where $\lrDini{\beta} \P_\beta(A) = \liminf_{\eps\downarrow 0} \frac{1}{\eps}\left(\P_{\beta+\eps}(A)-\P_\beta(A)\right)$ is the \textbf{lower-right Dini derivative} of $\P_\beta(A)$. 

\medskip

Let $G=(V,E,J)$ be a countable weighted graph, and let $S$ be a finite set of vertices of $G$. 
Let $\partial_E^\rightarrow S$ denote the set of oriented edges of $G$ with $e^- \in S$ and $e^+ \notin S$. Given $u,v\in V \setminus S$, we write $\{u\leftrightarrow v$ off $S\}$ to mean that there exists an open path connecting $u$ and $v$ that does not visit any vertex of $S$.
For each $\beta \geq 0$ let $\Phi_\beta(S)$ be defined by
\begin{align*}
\Phi_\beta(S) &=   \sum_{e \in \partial_E^\rightarrow S} \sum_{v\in V \setminus S} J_e \P_{\beta}(e^+ \leftrightarrow v \text{ off $S$}).
\end{align*}
Note that an edge $e$ is a closed pivotal for the event $\{o\leftrightarrow v\}$ if and only if there exists a (necessarily unique) orientation of $e$ such that $e^- \in K$, $e^+ \notin K$, and $v$ is connected to $e^+$ off of $K$. As such, Russo's formula implies that
\begin{equation}
\lrDini{\beta} \E_\beta |K_v| \geq \E_\beta \left[\Phi_\beta(K_v)\right] 
\end{equation}
for every $\beta \geq 0$ and $v\in V$, where $K_v$ denotes the cluster of $v$.

\medskip

We begin our analysis with the following key lemma, which does not require transitivity. We write $\Chi_\beta^\mathrm{min}=\inf_{v\in V} \E_\beta |K_v|$ and  $\Chi_\beta^\mathrm{max}=\sup_{v\in V} \E_\beta |K_v|$ so that $\Chi_\beta^\mathrm{min} = \Chi_\beta^\mathrm{max} = \Chi_\beta$ when $G$ is transitive.

\begin{lemma}
\label{lem:Psi_general}
Let $G$ be a countable weighted graph normalized so that $\sup_{v\in V} \sum_{e\in E^\rightarrow_v} J_e \leq 1$. Then
\begin{align}
\label{eq:Phi_general}
\Phi_\beta(S) &\geq 
\frac{\bigl(\Chi^\mathrm{min}_\beta |S| -\sum_{u,v \in S} T_\beta(u,v) \bigr)^2 }{\beta^2 \Chi^\mathrm{max}_\beta \sum_{u,v,w \in S} T_\beta(u,v) T_\beta(u,w)}
\end{align}
for every $\beta>0$ and every finite set of vertices $S$ in $G$.
\end{lemma}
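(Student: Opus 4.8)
The plan is to obtain the bound from a single application of the van den Berg--Kesten (BK) inequality together with two applications of Cauchy--Schwarz and the normalisation $\sup_v\sum_{e\in E^\rightarrow_v}J_e\le1$. The underlying observation is that any connection from $u\in S$ to $v\in V\setminus S$ must cross the boundary $\partial^\rightarrow_E S$, and that if one crosses at the \emph{last} vertex of a self-avoiding witnessing path that lies in $S$, then the two sub-paths produced on either side of the crossing are vertex-disjoint; consequently the relevant events occur disjointly and BK can be applied with no correlation corrections.

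Concretely, I would first prove that for every $u\in S$ and $v\in V\setminus S$ one has $\{u\leftrightarrow v\}\subseteq\bigcup_{e\in\partial^\rightarrow_E S}\bigl(\{u\leftrightarrow e^-\}\circ\{e\text{ open}\}\circ\{e^+\leftrightarrow v\text{ off }S\}\bigr)$, where $\circ$ denotes disjoint occurrence: given a self-avoiding open path $\gamma=(\gamma_0,\dots,\gamma_m)$ from $u=\gamma_0$ to $v=\gamma_m$, set $k=\max\{i:\gamma_i\in S\}$ (well defined with $k<m$ as $u\in S$, $v\notin S$) and orient the edge $e=\{\gamma_k,\gamma_{k+1}\}$ so that $e^-=\gamma_k\in S$, $e^+=\gamma_{k+1}\notin S$; then $(\gamma_0,\dots,\gamma_k)$, the edge $e$, and $(\gamma_{k+1},\dots,\gamma_m)$ — whose vertices all lie in $V\setminus S$ by maximality of $k$ — witness the three events using pairwise disjoint edge sets, because $\gamma$ is self-avoiding. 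Since the three events are increasing, BK yields $T_\beta(u,v)\le\beta\sum_{e\in\partial^\rightarrow_E S}J_e\,T_\beta(u,e^-)\,\P_\beta(e^+\leftrightarrow v\text{ off }S)$, using $1-e^{-\beta J_e}\le\beta J_e$.

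Writing $N:=\sum_{u\in S,\,v\in V\setminus S}T_\beta(u,v)$ and $g(x):=\sum_{u\in S}T_\beta(x,u)$, and recalling that $\Phi_\beta(S)=\sum_{e\in\partial^\rightarrow_E S}J_e\,\E_\beta|K^{V\setminus S}_{e^+}|$ where $K^{V\setminus S}_{e^+}$ is the cluster of $e^+$ in the subgraph induced on $V\setminus S$, summing the previous bound over $u\in S$ and $v\in V\setminus S$ gives $N\le\beta\sum_{e}J_e\,g(e^-)\,\E_\beta|K^{V\setminus S}_{e^+}|$. Applying Cauchy--Schwarz to this sum — splitting the $e$-th term as $\bigl(J_e^{1/2}g(e^-)\,\E_\beta|K^{V\setminus S}_{e^+}|^{1/2}\bigr)\cdot\bigl(J_e^{1/2}\,\E_\beta|K^{V\setminus S}_{e^+}|^{1/2}\bigr)$ so that the second factor reassembles into $\Phi_\beta(S)^{1/2}$ — yields $N^2\le\beta^2\,\Phi_\beta(S)\sum_e J_e\,g(e^-)^2\,\E_\beta|K^{V\setminus S}_{e^+}|$. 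Finally one bounds $\E_\beta|K^{V\setminus S}_{e^+}|\le\E_\beta|K_{e^+}|\le\Chi^{\mathrm{max}}_\beta$, groups the boundary edges by their endpoint $s=e^-\in S$, and uses $\sum_{e\in E^\rightarrow_s}J_e\le1$ to get $\sum_e J_e g(e^-)^2\le\sum_{s\in S}g(s)^2=\sum_{u,v,w\in S}T_\beta(u,v)T_\beta(u,w)$. This gives $\Phi_\beta(S)\ge N^2/\bigl(\beta^2\Chi^{\mathrm{max}}_\beta\sum_{u,v,w\in S}T_\beta(u,v)T_\beta(u,w)\bigr)$, and since $N=\sum_{u\in S}\E_\beta|K_u|-\sum_{u,v\in S}T_\beta(u,v)\ge\Chi^{\mathrm{min}}_\beta|S|-\sum_{u,v\in S}T_\beta(u,v)$ — with the asserted inequality trivial, as $\Phi_\beta(S)\ge0$, when this quantity is negative — the Lemma follows.

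The step that needs the most care is the first one: a direct attack — say by conditioning on the configuration on $V\setminus S$ and trying to decouple $|K^{V\setminus S}_{e^+}|$ from the cluster of $e^-$ inside $S$, which may itself re-enter $V\setminus S$ — leads to an awkward dependence structure. Crossing the boundary at the \emph{last} $S$-vertex of a self-avoiding witnessing path, rather than at the first (which would leave the $v$-side sub-path free to re-enter $S$), is exactly what makes the two sub-paths vertex-disjoint, so that BK applies verbatim; once that is set up, the remaining manipulations are entirely elementary.
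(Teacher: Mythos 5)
Your proof is correct and follows essentially the same route as the paper's: the same last-exit decomposition into a disjoint occurrence $\{u\leftrightarrow e^-\}\circ\{e\text{ open}\}\circ\{e^+\leftrightarrow v\text{ off }S\}$, BK plus $1-e^{-t}\leq t$, and then a Cauchy--Schwarz-type manipulation that isolates $\Phi_\beta(S)$ and the ``square'' diagram $\sum_{u,v,w\in S}T_\beta(u,v)T_\beta(u,w)$. The only difference is cosmetic: the paper applies Cauchy--Schwarz at the vertex level to get $\langle T_\beta\mathbbm{1}_S,f\rangle\leq\|T_\beta\mathbbm{1}_S\|_2\|f\|_2$ and then H\"older's $\|f\|_2\leq\|f\|_1^{1/2}\|f\|_\infty^{1/2}$, whereas you fold these two steps into a single Cauchy--Schwarz on the boundary-edge sum with the weight $J_e\,\E_\beta|K^{V\setminus S}_{e^+}|$, extracting $\Phi_\beta(S)^{1/2}$ directly; after bounding $\E_\beta|K^{V\setminus S}_{e^+}|\leq\Chi^{\mathrm{max}}_\beta$ and grouping by $e^-$, the two routes produce the identical bound.
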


The proof of this lemma will apply the \emph{van den Berg and Kesten} (BK) \emph{inequality} \cite{MR799280}  and the attendant notion of the \emph{disjoint occurrence} $A\circ B$ of two sets $A$ and $B$; we refer the unfamiliar reader to \cite[Chapter 2.3]{grimmett2010percolation} for background.


\begin{proof}[Proof of \cref{lem:Psi_general}]
Fix $S \subseteq V$ and $\beta >0$. 
  Observe that for each $u\in S$ and $w\notin S$ we have that
\[
\{u \leftrightarrow w \} \subseteq  \bigcup_{e \in \partial_E^\rightarrow S} \{u \leftrightarrow e^-\} \circ \{ e \text{ open} \} 
\circ
\{ e^+ \leftrightarrow w \text{ off $S$}\}.
\]
Indeed, if $\gamma$ is a simple open path from $u$ to $w$ and $e$ is the oriented edge that is crossed by $\gamma$ as it leaves $S$ for the last time, then the pieces of $\gamma$ before and after it crosses $e$ are disjoint witnesses for $\{u \leftrightarrow e^-\}$ and $\{e^+ \to w $ off $S\}$ that are both disjoint from $e$. 
Thus, applying a union bound and the BK inequality yields that
\begin{align*}
T_\beta(u,w) &\leq  \sum_{v\in S} T_\beta(u,v) \sum_{e \in E^\rightarrow_v} \mathbbm{1}(e^+ \notin S) (1-e^{-\beta J_e})\P_{\beta}(e^+ \leftrightarrow  w  \text{ off $S$})\\
&\leq  \beta \sum_{v\in S} T_\beta(u,v) \sum_{e \in E^\rightarrow_v} \mathbbm{1}(e^+ \notin S) J_e \P_{\beta}(e^+ \leftrightarrow  w  \text{ off $S$})
\end{align*}
for every $u \in S$ and $w\in V\setminus S$, where we used the inequality $1-e^{-t} \leq t$ in the second line. Summing over $u \in S$ and $w\in V \setminus S$, we deduce that
\[
\sum_{w\in V \setminus S} \sum_{u \in S} T_\beta(u,w) \leq \beta \sum_{w\in V \setminus S} \sum_{u,v\in S} T_\beta(u,v) \sum_{e \in E^\rightarrow_v} \mathbbm{1}(e^+ \notin S)J_e \P_{\beta}(e^+ \leftrightarrow  w \text{ off $S$}).
\]
 Let $f: V \to \R$ 
 be defined by 
\[
f(v) = \mathbbm{1}(v\in S) \sum_{w\in V \setminus S}\sum_{e \in E^\rightarrow_v} \mathbbm{1}(e^+ \notin S) J_e \P_{\beta}(e^+ \leftrightarrow  w  \text{ off $S$})
\]
so that the above inequality may be rewritten  as 
\[\sum_{w\in V \setminus S} \sum_{u \in S} T_\beta(u,w) \leq\beta \langle  \mathbbm{1}_S T_\beta \mathbbm{1}_S, f\rangle,
\]
where $\langle f,g \rangle = \sum_{v\in V} f(v)g(v)$ is the standard inner product on $L^2(V)$ and $\mathbbm{1}_S T_\beta \mathbbm{1}_S (v)=\mathbbm{1}(v\in S) \sum_{u\in S} T_\beta(v,u)$ for every $v\in V$. Applying Cauchy-Schwarz and (a trivial special case of) H\"older's inequality, we obtain that
\begin{equation}
\label{eq:Holder}
\sum_{w\in V \setminus S} \sum_{u \in S} T_\beta(u,w)  \leq \beta \langle \mathbbm{1}_S T_\beta \mathbbm{1}_S, f \rangle \leq \beta \|\mathbbm{1}_S T_\beta \mathbbm{1}_S \|_2 \| f \|_2 \leq \beta \|\mathbbm{1}_S T_\beta \mathbbm{1}_S \|_2 \| f \|_1^{1/2} \|f\|_\infty^{1/2}.
\end{equation}
We have from the definitions that $\|\mathbbm{1}_S T_\beta \mathbbm{1}_S\|_2^2 = \sum_{u,v,w\in S} T_\beta(u,v)T_\beta(u,w)$, $\|f\|_1=\Phi_{\beta}(S)$, 
and
\begin{align*}
\sum_{w\in V\setminus S}\sum_{u \in S} T_\beta(u,w) &= 
\sum_{w\in V}\sum_{u \in S} T_\beta(u,w) -  \sum_{u,v \in S} T_\beta(u,w)
\geq 
\Chi_{\beta}^\mathrm{min} |S| - \sum_{u,v \in S}T_\beta(u,v). 
\end{align*}
Since $\sum_{e\in E^\rightarrow_v} J_e \leq 1$ for every $v\in V$ we also have that
\[\|f\|_\infty \leq \sup_{v\in V} \sum_{w\in V} \sum_{e\in E^\rightarrow_v} J_e \P_\beta(e^+ \leftrightarrow w) \leq \Chi_{\beta}^\mathrm{max},\]
and the claim follows by substituting these four estimates into \eqref{eq:Holder} and rearranging.
\end{proof}

We now deduce \cref{prop:chi_diff_ineq} from \cref{lem:Psi_general}. 
Before beginning the proof, let us recall the following trivial consequence of Cauchy-Schwarz inequality: If $X$ and $Y$ are real-valued random variables defined on the same probability space such that $Y$ is positive almost surely and $\E[ Y ]$ is finite then
\[
\E \!\left[\frac{X^2}{Y} \right] \geq \myfrac{\E [ X ]^2}{\E[ Y ]^{\phantom{2}}}.
\]

\begin{proof}[Proof of \cref{prop:chi_diff_ineq}]
Recall that $K$ denotes the cluster of the origin. We have by Russo's formula and \eqref{eq:Phi_general} that
\begin{align*}
\frac{d \Chi_\beta}{d\beta} &\geq \E_p \left[\Phi_\beta\left(K\right)\right] \geq \E_\beta\left[\frac{\bigl(\Chi_\beta|K|-\sum_{x,y \in K} T_\beta(x,y)\bigr)^2}{\beta^2 \Chi_\beta \sum_{a,b,c \in K} T_\beta(a,b)T_\beta(a,c)}\right]
\end{align*}
for every $0<\beta<\beta_c$. (As previously mentioned, $\Chi_\beta$ is a smooth function of $\beta$ on $[0,\beta_c)$ \cite[Chapter 6.4]{grimmett2010percolation}, so that its usual derivative and lower-right Dini derivative coincide on this interval.)
Applying the Cauchy-Schwarz inequality as above with the random variables $X=\Chi_\beta|K|-\sum_{x,y \in K} T_\beta(x,y)$ and  $Y=\sum_{a,b,c \in K} T_\beta(a,b)T_\beta(a,c)$  yields that
\begin{align}
\label{eq:diagram_derivation0}
\frac{d \Chi_\beta}{d\beta} \geq \frac{1}{\beta^2 \Chi_\beta} \E_\beta\left[\Chi_\beta|K|-\sum_{x,y \in K} T_\beta(x,y)\right]^2 \E_\beta\left[ \sum_{a,b,c \in K} T_\beta(a,b)T_\beta(a,c)\right]^{-1}
\end{align}
for every $0<\beta<\beta_c$. (Note that the random variable $\sum_{a,b,c \in K} T_\beta(a,b)T_\beta(a,c)$ is bounded by $|K|^3$ and is therefore integrable for $\beta<\beta_c$ by sharpness of the phase transition \cite{duminil2015new,1901.10363,aizenman1987sharpness}.)
Thus, to complete the proof it suffices to prove that 
\begin{equation}
\label{eq:diagram_derivation1}
 \E_{\beta}\left[\sum_{x,y \in K} T_\beta(x,y)\right] \leq \Chi_{\beta} \nabla_\beta \quad \text{ and } \quad 
 \E_{\beta}\left[ \sum_{a,b,c \in K} T_\beta(a,b)T_\beta(a,c)\right] \leq (2A_\beta + B_\beta)\Chi_{\beta}
\end{equation}
for every $0<\beta<\beta_c$; the claimed estimate \eqref{eq:ABChi} will then follow by substituting \eqref{eq:diagram_derivation1} into \eqref{eq:diagram_derivation0}. 

\begin{figure}[t!]
\centering
\includegraphics[width=0.8\textwidth]{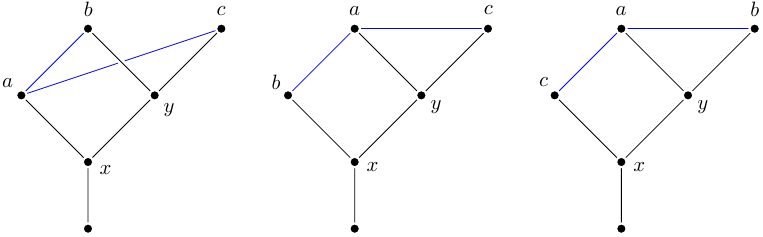}
\caption{Diagrammatic representations of the three sums contributing to the upper bound on the expectation of $\E_\beta \sum_{a,b,c\in K} T_\beta(a,b)T_\beta(a,c)$. Fixing $a$, $b$, and $c$, weighting each black edge by $T_\beta$, and summing over $x$ and $y$ gives the tree-graph bound on the probability that $a$, $b$, and $c$ all belong to the cluster of the origin.}
\label{fig:diagrams_tree_graph}
\end{figure}

\medskip

Both inequalities of \eqref{eq:diagram_derivation1} will follow by the same reasoning used to prove the ``tree graph inequalities" of Aizenman and Newman \cite{MR762034}, an account of which can also be found in \cite[Chapter 6.3]{grimmett2010percolation}.
We begin with the first of the two inequalities claimed in \eqref{eq:diagram_derivation1}. Let $x,y \in V$, and suppose that $x$ and $y$ are both connected to $o$. We claim that there must exist a vertex $w \in V$ (possibly equal to one of $o,x,$ or $y$) such that the event $\{o \leftrightarrow w\} \circ \{w \leftrightarrow x\} \circ \{w \leftrightarrow y\}$ holds. Indeed, if $\gamma_1$ is an open simple path from $v$ to $x$ and $\gamma_2$ is an open simple path from $v$ to $y$, then the last vertex of $\gamma_1$ visited by $\gamma_2$ has this property. Thus, we have by the BK inequality and the union bound that
\begin{equation}
\P_{\beta}(x,y \in K) \leq \sum_{w\in V}  T_\beta(o,w)T_\beta(w,x)T_\beta(w,y)
\label{eq:threepoint}
\end{equation}
for every $v,x,y \in V$. It follows that
\begin{multline*}
 \E_{\beta}\left[\sum_{x,y \in K} T_\beta(x,y)\right] =  \sum_{x,y \in V} \P_{\beta}(x,y \in K) T_\beta(x,y) 
\\\leq \sum_{x,y,w\in V} T_\beta(o,w)T_\beta(w,x)T_\beta(w,y) T_\beta(x,y)=\Chi_{\beta}\nabla_\beta
\end{multline*}
as claimed. We now turn to the second inequality of \eqref{eq:diagram_derivation1}, whose proof is very similar although the required expressions are larger.
Let $a,b,c \in V$. It follows by similar reasoning to the two-vertex case above that if $a$, $b$, and $c$ are all connected to $o$  then there exist vertices $x$ and $y$ such that at least one of the events
\begin{align*}
&\{o \leftrightarrow x\} \circ \{x \leftrightarrow a\} \circ \{ x \leftrightarrow y\} \circ \{y \leftrightarrow b\} \circ \{y \leftrightarrow c\},\\
&\{o \leftrightarrow x\} \circ \{x \leftrightarrow b\} \circ \{ x \leftrightarrow y\} \circ \{y \leftrightarrow a\} \circ \{y \leftrightarrow c\}, \text{ or}\\
&\{o \leftrightarrow x\} \circ \{x \leftrightarrow c\} \circ \{ x \leftrightarrow y\} \circ \{y \leftrightarrow a\} \circ \{y \leftrightarrow b\}
\end{align*}
holds. See \cite[Equation 6.93]{grimmett2010percolation} and its proof for details. Thus, it follows from a union bound and the BK inequality that
\begin{align}\P_{\beta}(a,b,c \in K)
&\leq \sum_{x,y \in V} T_{\beta}(o,x)T_\beta(x,a)T_\beta(x,y)T_\beta(y,b)T_\beta(y,c)
\nonumber\\
&\hspace{2cm}+\sum_{x,y \in V} T_{\beta}(o,x)T_\beta(x,b)T_\beta(x,y)T_\beta(y,a)T_\beta(y,c)
\nonumber\\
&\hspace{4cm}+\sum_{x,y \in V} T_{\beta}(o,x)T_\beta(x,c)T_\beta(x,y)T_\beta(y,a)T_\beta(y,b)
\label{eq:fourpoint}
\end{align}
for every $a,b,c\in V$ and $\beta \geq 0$.
Summing over $a,b,c\in V$, we deduce from this together with the definitions of $A_\beta$ and $B_\beta$ that
\[
\E_{\beta}\left[ \sum_{a,b,c \in K} T_\beta(a,b)T_\beta(a,c)\right]  = \sum_{a,b,c\in V}\P_{\beta}(a,b,c \in K) T_\beta(a,b)T_\beta(a,c) \leq \Chi_{\beta} (2A_\beta+B_\beta)
\]
for every $0\leq \beta < \beta_c$ as claimed; see \cref{fig:diagrams_tree_graph} for graphical representations of the resulting diagrammatic sums. \qedhere

\end{proof}








We finish this section by proving \cref{prop:triangle_comparison} in the Euclidean and hierarchical cases using Fourier analysis; the same proof is available on any abelian Cayley graph, but breaks down for general transitive graphs (including Cayley graphs of nonabelian groups), where Fourier analysis is not available. The proof of the general case of the proposition is deferred to \Cref{sec:linear_algebra}.

\begin{proof}[Proof of \cref{prop:triangle_comparison} in the Euclidean and hierarchical cases via Fourier analysis] 
We begin with the case that $G=\Z^d$ and $J$ is translation-invariant. 
Let $0\leq \beta<\beta_c$ and let $\mathbb{T}^d=[-\pi,\pi]^d$ be the $d$-dimensional torus.
Let $\tau_\beta(x)=T_\beta(0,x)$, which belongs to $\ell^1(\Z^d)$ by sharpness of the phase transition, and let $\hat \tau_\beta: \mathbb{T}^d \to \C$ be its Fourier transform
\[
\hat \tau_\beta(\theta) = \sum_{x\in \Z^d} \tau_\beta(x) e^{i \theta \cdot x}.
\]
The symmetry $\tau_\beta(x)=\tau_\beta(-x)$ ensures that $\hat \tau_\beta (\theta) \in \R$ for every $\theta \in \mathbb{T}^d$, while the fact that $\tau_\beta$ is real-valued implies that $\hat \tau_\beta(\theta)=\hat \tau_\beta(-\theta)$ for every $\theta \in \mathbb{T}^d$. Moreover, 
it is a lemma due to Aizenman and Newman \cite[Lemma 3.3]{MR762034} (see also \cref{lem:positive_definite} below) that the matrix $T_\beta$ is positive semidefinite and hence that  $\hat \tau_\beta(\theta)$ is  non-negative for every $\theta \in \mathbb{T}^d$.

\medskip

Letting $*$ denote convolution (on either $\Z^d$ or $\mathbb{T}^d$, taken with respect to either counting measure or normalized Lebesgue measure as appropriate) and using that Fourier transforms exchange the roles of convolution and multiplication,  we can  write the diagrammatic sums $A_\beta$ and $B_\beta$ as
\begin{align}
A_\beta &= \sum_{x\in \Z^d} \tau_\beta(x) [\tau_\beta * \tau_\beta](x) [\tau_\beta* \tau_\beta * \tau_\beta] (x)
= [\hat\tau_\beta * \hat\tau_\beta^2 * \hat\tau_\beta^3] (0)
\label{eq:FourierA}
\end{align}
and
\begin{align}
B_\beta = \sum_{x\in \Z^d} [\tau_\beta * \tau_\beta](x)^3 = \hat\tau_\beta^2 * \hat\tau_\beta^2 * \hat\tau_\beta^2 (0),
\label{eq:FourierB}
\end{align}
where we used that $\sum_{x \in \Z^d} f(x) = \hat f(0)$ for every $f \in \ell^1(\Z^d)$. Similarly, we can express $\nabla_\beta$ via either of the two equivalent expressions
\[
\nabla_\beta = \sum_{x\in \Z^d} \tau_\beta(x)[\tau_\beta * \tau_\beta](x) = [\hat \tau_\beta * \hat \tau_\beta^2](0) \quad \text{ and } \quad \nabla_\beta =  [\tau_\beta*\tau_\beta * \tau_\beta](0)  = \frac{1}{(2\pi)^d}\int_{\mathbb{T}^d} \hat \tau_\beta^3(\theta) \dif \theta.
\]
%
%
This Fourier-analytic perspective can easily be used to prove the bound $A_\beta \leq \nabla_\beta^2$. (In fact this inequality is seen even more easily directly in physical space as in \cref{lem:Abound}; we give a Fourier proof here to set the stage for the more difficult inequality $B_\beta \leq A_\beta$ below.) Indeed, since $\hat \tau_\beta$ is non-negative we may apply H\"older's inequality to deduce that
\begin{align*}
\hat \tau_\beta * \hat \tau_\beta^2 (\theta) &= \frac{1}{(2\pi)^d}\int_{\mathbb{T}^d} \hat \tau_\beta(\phi)\hat \tau_\beta^2 (\theta-\phi) \dif \phi
\leq \frac{1}{(2\pi)^d}\left[\int_{\mathbb{T}^d} \hat \tau_\beta(\phi)^3 \dif \phi \right]^{1/3}\left[\int_{\mathbb{T}^d} \hat \tau_\beta(\theta-\phi)^{3} \dif \phi \right]^{2/3}
\\
&=\frac{1}{(2\pi)^d}\int_{\mathbb{T}^d} \hat \tau_\beta(\phi)^3 \dif \phi = \frac{1}{(2\pi)^d}\int_{\mathbb{T}^d} \hat \tau_\beta(\phi)\hat \tau_\beta^2 (-\phi) \dif \phi = \hat \tau_\beta * \hat \tau_\beta^2 (0)
\end{align*}
for every $\theta \in \mathbb{T}^d$, where we used that $\hat \tau_\beta$ is even in the second equality on the second line, and hence that
\[
A_\beta = \frac{1}{(2\pi)^d}\int_{\mathbb{T}^d} [\hat \tau_\beta * \hat \tau_\beta^2](\phi) \hat \tau_\beta^3(-\phi) \dif \phi \leq [\hat \tau_\beta * \hat \tau_\beta^2](0) \frac{1}{(2\pi)^d}\int_{\mathbb{T}^d} \hat \tau_\beta^3(-\phi) \dif \phi = \nabla_\beta^2
\]
as claimed, where we used the non-negativity of $\hat \tau_\beta$ in the central inequality.

\medskip

It remains to establish the less obvious bound $B_\beta \leq A_\beta$.
To this end, we claim that if $f: \mathbb{T}^d \to [0,\infty)$ is an arbitrary non-negative function then 
\begin{equation}
\label{eq:FourierAndo}
f^2*f^2 (\theta) \leq f * f^3(\theta)\end{equation} for every $\theta \in \mathbb{T}^d$. Indeed, if $f,g: \mathbb{T}^d \to [0,\infty)$ are any two functions then the AM-GM inequality implies that
\begin{align*}
[f^2 * g^2](\theta) &= \frac{1}{(2\pi)^d}\int_{\mathbb{T}^d} f^2(\phi)g^2(\theta-\phi) \dif \phi \\&\leq  \frac{1}{(2\pi)^d}\int_{\mathbb{T}^d} \frac{1}{2}\left[f(\phi)g^3(\theta-\phi)+f^3(\phi)g(\theta-\phi)\right] \dif \phi
= \frac{1}{2}\left([f*g^3](\theta)+[f^3*g](\theta) \right)
\end{align*}
for every $\theta \in \mathbb{T}^d$, 
and the claim follows by taking $f=g$. Applying the inequality \eqref{eq:FourierAndo} with $f=\hat \tau_\beta$ and using positivity of $\hat \tau_\beta^2$ we deduce that $\hat \tau_\beta^2 * \hat \tau_\beta^2 * \hat \tau^2_\beta (\theta) \leq \hat \tau_\beta * \hat \tau_\beta^3 * \hat \tau^2_\beta (\theta)$ for every $\theta \in \mathbb{T}^d$ and consequently that $B_\beta \leq A_\beta$ as claimed. This completes the proof in the case $G=\Z^d$.

\medskip

The proof for the hierarchical lattice follows similarly, but where the Fourier transform $\hat f : \bbH^d_L\to \C$ of a function $f\in \ell^1(\bbH^d_L,\C)$ is defined by 
\[
\hat f(y) = \sum_{x\in \bbH^d_L} f(x) e^{i y \cdot x} \qquad \text{ where $y\cdot x=\frac{2\pi}{L} \sum_{i=0}^\infty y_i \cdot x_i$ for every $x,y\in \bbH^d_L$.}
\]
Note in particular that the hierarchical lattice is its own Pontryagin dual. The Fourier transform on $\bbH^d_L$ enjoys all the same properties that are used to carry out our analysis on $\Z^d$ above; we omit the details.
\end{proof}

\section{The tail of the volume and the infinite cluster density}



In this section we show that mean-field behaviour of the susceptibility implies mean-field behaviour of the critical volume distribution and the infinite cluster density.
We begin by stating and proving a generalization to weighted graphs of an inequality established in the very recent work of Dewan and Muirhead \cite[Proposition 2.10]{dewan2021upper}, from which we will deduce \cref{thm:DM_Chi} as a corollary. The statement of this inequality will involve the notion of \emph{decision trees}, which have recently come to play an important role in mathematical physics following the breakthrough work of Duminil-Copin, Raoufi, and Tassion \cite{MR3898174}.

\medskip
Let $E$ be a countable set and let $E^*=\bigcup_{n \in \N \cup \{\infty\}} E^n$ be the set of finite or infinite sequences in $E$, which is equipped with the product topology and associated Borel $\sigma$-algebra.
  A \textbf{decision tree} is a function 
  $T:\{0,1\}^E \to E^*$ such that
$T_1(\omega) = e_1$ for some  fixed $e_1 \in E$ and for each $n \geq 2$ there exists a function $S_n : (E\times \{0,1\})^{n-1} \to E \cup \varnothing$ such that either $S_n\left[\left(T_i,\omega(T_i)\right)_{i=1}^{n-1}\right]=\varnothing$ in which case $T(\omega)=(T_1(\omega),\ldots,T_{n-1}(\omega))$ (i.e., the decision tree halts) or else
\[
T_n(\omega) = S_n\left[\left(T_i,\omega(T_i)\right)_{i=1}^{n-1}\right].
\]
That is, $T$ is a deterministic procedure for querying the values of the configuration $\omega \in \{0,1\}^E$ that starts by querying the value of some fixed edge $e_1$ and at each subsequent step chooses either to halt or to query the value of some other edge as a function of the values it has already observed. 

\medskip

Given a decision tree $T$ and $\omega\in \{0,1\}^E$, we write $\tau(\omega)$ for the length of the sequence $T(\omega)$.
Given a Borel subset $A$ of $\{0,1\}^E$, we say that a decision tree $T$ \textbf{Borel-computes} $A$ if there exists a Borel subset $\sA$ of $\{0,1\}^*$ such that $\omega \in A$ if and only if $(\omega(T_i))_{i=1}^{\tau(\omega)} \in \sA$. Given a decision tree $T$, a measure $\mu$ on $\{0,1\}^E$, and $e\in E$, we define the \textbf{revealment probability}
\[
\operatorname{Rev}(\mu,T,e) = \mu(\{ \omega: \exists n\geq 1 \text{ such that } T_n(\omega)=e \}).
\]
Given a countable weighted graph $G=(V,E,J)$, $\beta \geq 0$, and a decision tree $T:\{0,1\}^E \to E^*$, we write $\operatorname{Rev}_\beta(T,e)=\operatorname{Rev}(\P_\beta,T,e)$. 

\begin{theorem}[Generalised Dewan-Muirhead]
\label{thm:DewanMuirhead}
Let $G=(V,E,J)$ be a countable weighted graph, let $A \subseteq \{0,1\}^E$ be a Borel set, and let $T$ be a decision tree that Borel-computes the event $A$. Then 
\begin{equation}
|\P_{\beta_1}(A)-\P_{\beta_2}(A)|^2 \leq \frac{|\beta_1-\beta_2|^2}{\min\{\beta_1,\beta_2\}}  \max\{\P_{\beta_1}(A),\P_{\beta_2}(A)\} \sum_{e\in E} J_e \operatorname{Rev}_{\beta_1}(T,e)
\end{equation}
for every $\beta_1,\beta_2 \geq 0$.
\end{theorem}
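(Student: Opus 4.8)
The plan is to adapt the Dewan--Muirhead differential-inequality argument to the weighted, decision-tree setting. Fix a decision tree $T$ that Borel-computes $A$, write $p_e(\beta) = 1-e^{-\beta J_e}$ for the retention probability of edge $e$ at parameter $\beta$, and consider the function $\beta \mapsto \P_\beta(A)$. The first step is to differentiate this: using the OSSS-type / Russo-type identity adapted to decision trees, one has
\[
\frac{d}{d\beta}\P_\beta(A) = \sum_{e\in E} \frac{d p_e}{d\beta} \,\P_\beta(e \text{ is pivotal for } A) = \sum_{e\in E} J_e e^{-\beta J_e}\, \P_\beta(e \text{ is pivotal for }A),
\]
where, because $A$ may depend on infinitely many edges, this should be read as a lower-Dini-derivative inequality when $A$ is increasing and as a genuine identity when $A$ depends on finitely many edges; one can reduce to the latter by an approximation/monotone-limit argument, or work with Dini derivatives throughout. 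The key point inherited from the OSSS philosophy is that pivotality of $e$ for $A$ forces $e$ to be queried by any decision tree computing $A$: if $e$ is pivotal then flipping $\omega(e)$ changes membership in $A$, so the tree's output sequence must contain $e$. Hence $\P_\beta(e\text{ pivotal for }A) \le \operatorname{Rev}_\beta(T,e)$ — but we will need a slightly sharper bound incorporating the indicator of $A$.

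The second, crucial step is to bound $\P_\beta(e \text{ pivotal for } A)$ more carefully so that the eventual estimate carries a factor of $\P_\beta(A)$ rather than $1$. Following Dewan--Muirhead, one writes pivotality as an event that is ``anchored'' to $A$: when $A$ is increasing, $\{e \text{ pivotal}\} \cap A = \{e \text{ open and pivotal}\}$, and on this event one can resample $\omega(e)$; more usefully, one uses that $\{e\text{ pivotal for }A\} = \{e\text{ pivotal for }A\}\cap(A \cup A^c)$ and exploits the fact that the tree reveals $e$ on a definite positive-probability event. The technical heart is an inequality of the schematic shape
\[
\P_\beta(e\text{ pivotal for }A) \le \frac{1}{\beta J_e}\sqrt{\operatorname{Rev}_\beta(T,e)}\cdot\sqrt{\operatorname{Rev}_\beta(T,e)\wedge \text{(something involving }\P_\beta(A))},
\]
obtained by splitting according to whether $\omega(e)$ is open or closed and using that, conditionally on the tree reaching $e$, the value $\omega(e)$ is fresh Bernoulli$(p_e)$ randomness; the factor $1/(\beta J_e)$ (or $1/(e^{\beta J_e}-1) \le 1/(\beta J_e)$ after absorbing into $J_e e^{-\beta J_e}$) comes from the worst-case ratio between $\frac{dp_e}{d\beta}$ and $\min(p_e,1-p_e)$. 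Summed against $J_e e^{-\beta J_e}$, the $J_e$ cancels, leaving $\sum_e J_e \operatorname{Rev}_\beta(T,e)$ in the final bound.

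The third step assembles these into a differential inequality and integrates. One arrives at something like
\[
\Bigl|\frac{d}{d\beta}\P_\beta(A)\Bigr| \le \frac{1}{\beta}\sqrt{\P_\beta(A)}\,\Bigl(\sum_{e\in E} J_e \operatorname{Rev}_\beta(T,e)\Bigr)^{1/2}
\]
(with the appropriate one-sided-derivative caveat), from which, integrating the identity $\frac{d}{d\beta}2\sqrt{\P_\beta(A)} = \P_\beta(A)^{-1/2}\frac{d}{d\beta}\P_\beta(A)$ over $[\beta_1,\beta_2]$ and using monotonicity of the relevant quantities in $\beta$ (so that $\operatorname{Rev}_\beta$ and $\P_\beta(A)$ at the endpoint dominate), one extracts
\[
\bigl|\sqrt{\P_{\beta_2}(A)}-\sqrt{\P_{\beta_1}(A)}\bigr|^2 \le \frac{|\beta_1-\beta_2|^2}{\min\{\beta_1,\beta_2\}}\cdot\frac{\max\{\P_{\beta_1}(A),\P_{\beta_2}(A)\}}{\cdots}\sum_{e\in E}J_e\operatorname{Rev}_{\beta_1}(T,e),
\]
and then passes from the difference of square roots to the difference of probabilities via $|\sqrt{a}-\sqrt{b}|\cdot(\sqrt a + \sqrt b) = |a-b|$ and $\sqrt a + \sqrt b \ge \sqrt{\max(a,b)}$. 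Care is needed to land the revealment at $\beta_1$ rather than at a generic intermediate value — this is handled by choosing $\beta_1 = \min\{\beta_1,\beta_2\}$ and using monotonicity of revealment, or by bounding the integrand uniformly. I expect the main obstacle to be the second step: getting the $\sqrt{\P_\beta(A)}$ factor (rather than the trivial bound $1$) out of the pivotality estimate in a way that respects the decision-tree structure and the weighted (non-$\{0,1\}$) edge probabilities simultaneously — this is exactly where the Dewan--Muirhead refinement over Newman's fluctuation bound lives, and where the weighted generalisation requires the most attention to the interplay between $J_e$, $p_e(\beta)$, and $\frac{dp_e}{d\beta}$.
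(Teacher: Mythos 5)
Your proposal takes a fundamentally different route from the paper, and unfortunately the route has a gap at precisely the step you flag as the ``main obstacle.'' The paper does not differentiate $\P_\beta(A)$ at all. Instead it runs the decision tree $T$ on two independent configurations $\omega_1\sim\P_{\beta_1}$ and $\omega_2\sim\P_{\beta_2}$, looks at the resulting revealed bit-sequences $X$ and $Y$, and bounds the Kullback--Leibler divergence $D_{\mathrm{KL}}(X\|Y)$ by the chain rule: each freshly queried edge contributes at most $D_{\mathrm{KL}}(\operatorname{Ber}(1-e^{-\beta_1 J_e})\|\operatorname{Ber}(1-e^{-\beta_2 J_e}))\leq |\beta_1-\beta_2|^2 J_e/(2\min\{\beta_1,\beta_2\})$, weighted by the probability of reaching it, which sums to $\frac{|\beta_1-\beta_2|^2}{2\min\{\beta_1,\beta_2\}}\sum_e J_e\operatorname{Rev}_{\beta_1}(T,e)$. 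The factor $\max\{\P_{\beta_1}(A),\P_{\beta_2}(A)\}$ then comes for free from the generalised Pinsker inequality $|\mu(A)-\nu(A)|^2\leq 2D_{\mathrm{KL}}(\mu\|\nu)\max\{\mu(A),\nu(A)\}$ of Dewan and Muirhead, applied to the laws of $X$ and $Y$. This is a finite-parameter comparison, not an integrated differential inequality, and it is precisely what the paper's own remark after \cref{thm:DM_Chi} describes as a ``non-differential improvement.''

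The gap in your approach is the bound you write schematically as $\P_\beta(e\text{ pivotal})\lesssim (\beta J_e)^{-1}\sqrt{\operatorname{Rev}_\beta(T,e)}\cdot\sqrt{\cdots\P_\beta(A)\cdots}$. The containment $\{e\text{ pivotal for }A\}\subseteq\{T\text{ reveals }e\}$ is correct, so $\P_\beta(e\text{ pivotal})\leq\operatorname{Rev}_\beta(T,e)$, and for increasing $A$ one also has $\{e\text{ open and pivotal}\}\subseteq A$, giving $(1-e^{-\beta J_e})\P_\beta(e\text{ pivotal})\leq\P_\beta(A)$ edge by edge. But when you plug either of these into Russo's formula you get either $\sum_e J_e\operatorname{Rev}_\beta(T,e)$ with no $\P_\beta(A)$ factor, or a sum that fails to recombine into $\sqrt{\P_\beta(A)}\cdot\bigl(\sum_e J_e\operatorname{Rev}_\beta(T,e)\bigr)^{1/2}$; there is no evident Cauchy--Schwarz split that manufactures the geometric mean across \emph{all} edges simultaneously, because pivotality is a global event whereas revealment is not. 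You have in fact identified this as the crux yourself, but have not supplied the argument. Moreover, you attribute the ``refinement over Newman's fluctuation bound'' to a sharper pivotality estimate; in fact Dewan and Muirhead's refinement is the replacement of the fluctuation/differential approach by the relative entropy approach altogether, which sidesteps pivotality entirely. Given the paper's explicit remark that entropy methods strictly dominate fluctuation and maximal-inequality methods here, I do not expect the differential route to close this gap and reproduce the stated inequality.
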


This inequality plays an interesting complementary role to the \emph{OSSS inequality} of O'Donnel, Saks, Schramm, and Servedio \cite{o2005every}, which roughly states that if $A$ is \emph{increasing} then the existence of a decision tree computing $A$  with low \emph{maximum} revealment implies that the logarithmic derivative of $\P_\beta(A)$ is \emph{large}. 

\begin{remark}
The original Dewan-Muirhead inequality required $A$ to depend on at most finitely many edges. This assumption is not appropriate in the long-range case, and we have introduced the notion of a decision tree Borel-computing an event to circumvent this issue. In other parts of the literature, one often defines a decision tree $T$ to compute an event $A$ if $A$ belongs to the \emph{completion} of the $\sigma$-algebra generated by $T$; considering infinite-volume cases of \cref{thm:DewanMuirhead} in which $\P_{\beta_1}(A)=1$ and $\P_{\beta_2}(A)=0$ -- in which case $A$ is trivially in the completion of the $\sigma$-algebra generated by \emph{any} decision tree -- shows that this notion of computation cannot be used in the statement of \cref{thm:DewanMuirhead}.
\end{remark}

The proof of \cref{thm:DewanMuirhead} will rely on the notion of \emph{relative entropy}. Given two probability measures $\mu$ and $\nu$ on a common measurable space $\Omega$, recall that the \textbf{relative entropy} (a.k.a.\ \textbf{Kullback-Liebler divergence}) from $\nu$ to $\mu$ is defined to be 
\[
D_{\mathrm{KL}}(\mu || \nu) =  \int \log \left(\frac{d\mu}{d\nu} \right) d\mu(x) 
\]
if $\mu$ is absolutely continuous with respect to $\nu$, and is defined to be infinite otherwise.
 Given random variables $X$ and $Y$ taking values in the same space (but not necessarily defined on the same probability space), we write $D_\mathrm{KL}(X||Y)$ for the relative entropy from the law of $Y$ to the law of $X$.
The relevance of this quantity to \cref{thm:DewanMuirhead} arises from a generalization of Pinsker's inequality established in \cite[Lemma 2.12]{dewan2021upper}, which states that if $\mu$ and $\nu$ are two probability measures on a common measurable space $\Omega$ then
\begin{equation}
\label{eq:Pinsker}
|\mu(A)-\nu(A)|^2 \leq 2 D_\mathrm{KL}(\mu||\nu) \max\{\mu(A),\nu(A)\}
\end{equation}
for every event $A \subseteq \Omega$. Indeed, the expression $\frac{|\beta_1-\beta_2|^2}{2\min\{\beta_1,\beta_2\}}  \sum_{e\in E} J_e \operatorname{Rev}_{\beta_1}(T,e)$ appearing in \cref{thm:DewanMuirhead} will arise as an upper bound on the relative entropy of two appropriately defined random variables. We will also use the \textbf{chain rule} for the relative entropy, which states that if $(X_1,X_2)$ and $(Y_1,Y_2)$ are random variables taking values in the same product space $\Omega_1 \times \Omega_2$ then
\begin{equation}
\label{eq:chainrule}
D_\mathrm{KL}\Bigl((X_1,X_2) || (Y_1,Y_2)\Bigr) = D_\mathrm{KL}(X_1||Y_1) + \E_{x\sim X_1} \left[ D_\mathrm{KL}\Bigl( (X_2 | X_1 =x ) \; \big|\big| \; (Y_2 | Y_1=x) \Bigr) \right],
\end{equation}
where we write $(X_2 | X_1 =x )$ for the conditional distribution of $X_2$ given $X_1=x$ and write $\E_{x\sim X_1}$ for an expectation taken over a random $x$ with the law of $X_1$.

\medskip

We now begin to work towards the proof of \cref{thm:DewanMuirhead}.
We will use the following estimate on the relative entropy of Bernoulli random variables, which we express in a form that is convenient for our applications to percolation on weighted graphs.


\begin{lemma}
\label{lem:Bernoulli_DKL}
For each $p\in [0,1]$ let $\operatorname{Ber}(p)$ denote the law of a Bernoulli-$p$ random variable. The estimate
\begin{equation}
\label{eq:Bernoulli_DKL}
D_\mathrm{KL}\Bigl(\operatorname{Ber}\bigl(1-e^{-a}\bigr)\Big|\Big|\operatorname{Ber}\bigl(1-e^{-b}\bigr)\Bigr)=
D_\mathrm{KL}\Bigl(\operatorname{Ber}\bigl(e^{-a}\bigr)\Big|\Big|\operatorname{Ber}\bigl(e^{-b}\bigr)\Bigr)
\leq \frac{|a-b|^2}{2 \min\{a,b\}}
\end{equation}
holds for every $a,b>0$.
\end{lemma}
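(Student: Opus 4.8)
The plan is to reduce to bounding $D_\mathrm{KL}\bigl(\operatorname{Ber}(e^{-a})\,\|\,\operatorname{Ber}(e^{-b})\bigr)$ via an exact integral representation and then estimate the integrand by hand. First, the asserted equality is immediate: for any $p,q\in(0,1)$ one has $D_\mathrm{KL}(\operatorname{Ber}(p)\,\|\,\operatorname{Ber}(q)) = p\log(p/q)+(1-p)\log\bigl((1-p)/(1-q)\bigr)$, which is visibly invariant under $(p,q)\mapsto(1-p,1-q)$, so passing between $p=1-e^{-a}$, $q=1-e^{-b}$ and $p=e^{-a}$, $q=e^{-b}$ merely interchanges the two summands. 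The case $a=b$ is trivial, so assume $a\neq b$.

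Next I would establish the integral representation
\[
D_\mathrm{KL}\bigl(\operatorname{Ber}(e^{-a})\,\|\,\operatorname{Ber}(e^{-b})\bigr) = \int_a^b \frac{e^{-a}-e^{-s}}{1-e^{-s}}\dif s .
\]
This follows by differentiating in the second argument: $\frac{d}{dq}D_\mathrm{KL}(\operatorname{Ber}(p)\,\|\,\operatorname{Ber}(q)) = \frac{q-p}{q(1-q)}$, so together with $D_\mathrm{KL}(\operatorname{Ber}(p)\,\|\,\operatorname{Ber}(p))=0$ we get $D_\mathrm{KL}(\operatorname{Ber}(p)\,\|\,\operatorname{Ber}(q))=\int_p^q \frac{t-p}{t(1-t)}\dif t$, and substituting $t=e^{-s}$ (so $\dif t=-e^{-s}\dif s$) with $p=e^{-a}$, $q=e^{-b}$ yields the displayed identity. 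In particular the right-hand side is manifestly nonnegative: the integrand has the sign of $s-a$, and when $b<a$ this is compensated by the reversed orientation of the integral, consistently with Gibbs' inequality.

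The heart of the argument is the pointwise bound
\[
\left|\frac{e^{-a}-e^{-s}}{1-e^{-s}}\right| \le \frac{|s-a|}{\min\{a,b\}}
\qquad\text{for every $s$ lying (weakly) between $a$ and $b$.}
\]
The two orderings must be handled separately — this is the only genuine subtlety, owing to the asymmetry of relative entropy. If $s\geq a$ then $0\le e^{-a}-e^{-s}=e^{-a}\bigl(1-e^{-(s-a)}\bigr)\le e^{-a}(s-a)$ while $1-e^{-s}\ge 1-e^{-a}$, so the left-hand side is at most $\frac{e^{-a}(s-a)}{1-e^{-a}}=\frac{s-a}{e^{a}-1}\le \frac{s-a}{a}$. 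If instead $s<a$ then $0\le e^{-s}-e^{-a}=e^{-s}\bigl(1-e^{-(a-s)}\bigr)\le e^{-s}(a-s)$, so the left-hand side is at most $\frac{e^{-s}(a-s)}{1-e^{-s}}=\frac{a-s}{e^{s}-1}\le \frac{a-s}{s}$; here one uses only the elementary inequalities $1-e^{-x}\le x$ and $e^{x}-1\ge x$. In both cases the bound equals $|s-a|/\min\{a,s\}$, and since $s$ lies between $a$ and $b$ we have $\min\{a,s\}\ge\min\{a,b\}$, which gives the claim.

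Finally I would integrate. Combining the integral representation with the pointwise bound and using $\bigl|\int_a^b|s-a|\dif s\bigr|=\tfrac12(a-b)^2$,
\[
0\le D_\mathrm{KL}\bigl(\operatorname{Ber}(e^{-a})\,\|\,\operatorname{Ber}(e^{-b})\bigr)\le \frac{1}{\min\{a,b\}}\left|\int_a^b |s-a|\dif s\right| = \frac{|a-b|^2}{2\min\{a,b\}},
\]
which is exactly the claimed estimate. (An alternative route, which I would mention only in passing, realises $\operatorname{Ber}(1-e^{-a})$ as the law of $\mathbf 1\{N\geq 1\}$ for $N\sim\mathrm{Poisson}(a)$, applies the data-processing inequality to reduce to $D_\mathrm{KL}(\mathrm{Poisson}(a)\,\|\,\mathrm{Poisson}(b))=a\log(a/b)+b-a$, and bounds this by $\tfrac{|a-b|^2}{2\min\{a,b\}}$ by a short power-series comparison.) The main obstacle is purely bookkeeping around the asymmetry of relative entropy — the cases $a<b$ and $b<a$ really do need the two different elementary estimates above — but once the integral representation is in place the rest is routine.
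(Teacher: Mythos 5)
Your proof is correct and follows essentially the same route as the paper: the same exact integral representation $D_\mathrm{KL}\bigl(\operatorname{Ber}(e^{-a})\,\|\,\operatorname{Ber}(e^{-b})\bigr)=\int_a^b\frac{e^{-a}-e^{-s}}{1-e^{-s}}\dif s$, the same split into $a<b$ and $b<a$, and the same two elementary inequalities $1-e^{-x}\le x$ and $e^{x}-1\ge x$. The only (cosmetic) difference is the order of estimation: you extract a clean pointwise bound $|s-a|/\min\{a,b\}$ on the integrand before integrating, whereas the paper first pulls out the constant prefactor $e^{-a}/(1-e^{-a})$, integrates $1-e^{-u}\le u$, and only then bounds the prefactor by $1/\min\{a,b\}$.
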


\begin{proof}[Proof of \cref{lem:Bernoulli_DKL}]
We have by definition that
\begin{align*}D_\mathrm{KL}\Bigl(\operatorname{Ber}(p)\Big|\Big|\operatorname{Ber}(q)\Bigr) &=  p \log \frac{p}{q} + (1-p) \log \frac{1-p}{1-q} = \int_q^p \frac{p-s}{s(1-s)} \dif s
\end{align*}
for every $0<p,q<1$. First assume that $a<b$. Taking $p=e^{-a}$ and $q=e^{-b}$ and using the substitution $s=e^{-t}$ yields that
\begin{align*}D_\mathrm{KL}\Bigl(\operatorname{Ber}\bigl(e^{-a}\bigr)\Big|\Big|\operatorname{Ber}\bigl(e^{-b}\bigr)\Bigr)= \int_a^b \frac{e^{-a}-e^{-t}}{1-e^{-t}} \dif t \leq \frac{e^{-a}}{1-e^{-a}}\int_0^{b-a} (1-e^{-t})\dif t.
\end{align*}
Using that $1-e^{-t} \leq t$ and $e^{-t}/(1-e^{-t}) \leq 1/t$ for $t\geq 0$, we deduce that
\begin{align*}D_\mathrm{KL}\Bigl(\operatorname{Ber}\bigl(e^{-a}\bigr)\Big|\Big|\operatorname{Ber}\bigl(e^{-b}\bigr)\Bigr) \leq \frac{e^{-a}(b-a)^2}{2(1-e^{-a})} \leq \frac{|a-b|^2}{2\min\{a,b\}}
\end{align*}
as claimed. Now assume that $b<a$. Taking $p=e^{-a}$ and $q=e^{-b}$ and using the substitution $s=e^{-t}$ yields that
\begin{align*}D_\mathrm{KL}\Bigl(\operatorname{Ber}\bigl(e^{-a}\bigr)\Big|\Big|\operatorname{Ber}\bigl(e^{-b}\bigr)\Bigr)= \int_b^a \frac{e^{-t}-e^{-a}}{1-e^{-t}} \dif t 
\leq \frac{e^{-a}}{1-e^{-b}}\int_0^{a-b} (e^{t}-1)\dif t.
\end{align*}
Now, using the inequality $e^{-x}(e^x-1) \leq x$ we deduce that $e^{-a}(e^t-1) \leq e^{-b} t$ for every $0\leq t \leq a-b$ and hence that
\begin{align*}D_\mathrm{KL}\Bigl(\operatorname{Ber}\bigl(e^{-a}\bigr)\Big|\Big|\operatorname{Ber}\bigl(e^{-b}\bigr)\Bigr) 
\leq \frac{e^{-b}}{1-e^{-b}}\int_0^{a-b} t\dif t \leq \frac{e^{-b}(b-a)^2}{2(1-e^{-b})} \leq \frac{|a-b|^2}{2\min\{a,b\}}
\end{align*}
as before.
\end{proof}

We are now ready to prove \cref{thm:DewanMuirhead}.

\begin{proof}[Proof of \cref{thm:DewanMuirhead}]

For notational convenience, for each $\omega\in \{0,1\}^E$ we will set $T_i(\omega)=\varnothing$ for $i\geq \tau(\omega)$ so that $T_i(\omega)$ is well-defined for every $\omega \in \{0,1\}^E$ and $i\geq 1$.
Let $\omega_1,\omega_2 \in \{0,1\}^E$ be samples of Bernoulli percolation on $G$ with parameters $\beta_1$ and $\beta_2$ respectively, and let $T_1^1=T_1(\omega_1), T_2^1=T_2(\omega_1), \ldots, T_{\tau_1}^1=T_{\tau(\omega_1)}(\omega_1)$ and $T_1^2=T_2(\omega_2), T_2^2=T_2(\omega_2), \ldots, T_{\tau_2}^2=T_{\tau(\omega_2)}(\omega_2)$ be the edges of $G$ that are revealed when applying the decision tree $T$ to $\omega_1$ and $\omega_2$ respectively. 
For each $i\geq 1$, let $X_i=\omega_1(T_i^1)$ if $i \leq \tau_1$ and $X_i = \varnothing$ otherwise, so that $X=(X_1,X_2,\ldots)$ is an infinite $\{0,1,\varnothing\}$-valued sequence, and similarly define $Y_i = \omega_2(T_i^2)$ if if $i \leq \tau_2$ and $Y_i = \varnothing$ otherwise. For each $i\geq 1$, let $X^i=(X_1,\ldots,X_i)$ and $Y^i=(Y_1,\ldots,Y_i)$. For each $k\geq 1$, we have by the chain rule that
\begin{multline}
\label{eq:chainruleproof}
D_\mathrm{KL}\bigl(X^{k+1} || Y^{k+1}\bigr)
=D_\mathrm{KL}\bigl(X^{k} || Y^{k}\bigr)
+ \E_{x \sim X^k} \left[D_\mathrm{KL}\Bigl( \bigl(X_{k+1} \mid X^{k} = x \bigr) \Big|\Big| \bigl(Y_{k+1} \mid Y^k= x \bigr)  \Bigr)\right].
\end{multline}
Since $T$ is a decision tree, for each $x\in \{0,1,\varnothing\}^k$ we either have that $X_{k+1}=\varnothing$ and $Y_{k+1}=\varnothing$ on the events $X^k=x$ and $Y^k=x$ or else that there exists an edge $T_{k+1}(x)$ such that $X_{k+1}=\omega_1(T_{k+1}(x))$ and $Y_{k+1}=\omega_2(T_{k+1}(x))$ on the events $X^k=x$ and $Y^k=x$. Letting $J_{k+1}(x)$ be the weight of the edge $T_{k+1}(x)$, it follows that 
\begin{align*}
&D_\mathrm{KL}\Bigl( \bigl(X_{k+1} \mid X^{k} = x \bigr) \Big|\Big| \bigl(Y_{k+1} \mid Y^k= x \bigr)  \Bigr) 
\\&\hspace{4.5cm}= 
\mathbbm{1}(T_{k+1}(x) \neq \varnothing) D_\mathrm{KL}\Bigl(\operatorname{Ber}\bigl(1-e^{-\beta_1 J_{k+1}(x)}\bigr)\Big|\Big|\operatorname{Ber}\bigl(1-e^{-\beta_2 J_{k+1}(x)}\bigr)\Bigr)
\\&\hspace{4.5cm}
\leq
\mathbbm{1}(T_{k+1}(x) \neq \varnothing) \frac{|\beta_1-\beta_2|^2}{2\min\{\beta_1,\beta_2\}} J_{k+1}(x).
\end{align*}
for every $x\in \{0,1,\varnothing\}^k$ such that $X^k=x$ with positive probability. Taking expectations and letting $J_i$ be the weight of the edge $T_i^1$ for each $1\leq i \leq \tau$, it follows by induction on $k$ that
\begin{equation}
D_\mathrm{KL}(X^k || Y^k) \leq \frac{|\beta_1-\beta_2|}{2\min\{\beta_1,\beta_2\}}\E_{\beta_1} \sum_{i=1}^{k\wedge \tau}J_{i} 
\end{equation}
for every $k\geq 1$. Taking the limit as $k\uparrow \infty$ (which is valid since the relative entropy $D_\mathrm{KL}(\mu ||\nu)$ is lower semicontinuous in $(\mu,\nu)$ with respect to the weak topology on probability measures \cite[Theorem 1]{MR689214}), we deduce that
\begin{equation}
D_\mathrm{KL}(X|| Y) 
\leq \frac{|\beta_1-\beta_2|}{2\min\{\beta_1,\beta_2\}}\E_{\beta_1} \sum_{i=1}^{\tau}J_{i}
= \frac{|\beta_1-\beta_2|^2}{2\min\{\beta_1,\beta_2\}} \sum_{e\in E} J_e \cdot \operatorname{Rev}_{\beta_1}(T,e).
\end{equation}
Now, since $T$ Borel-computes the event $A$, there exists a Borel-measurable subset $\sA$ of $\{0,1,\varnothing\}^\N$ such that $\omega_1$ belongs to $A$ if and only if $X$ belongs to $\sA$
and $\omega_2$ belongs to $A$ if and only if $Y$ belongs to $\sA$. The claim now follows by applying the generalised Pinsker  inequality \eqref{eq:Pinsker} to the laws of $X$ and $Y$ and the event $\sA$.
\end{proof}

\begin{proof}[Proof of \cref{thm:DM_Chi}]
We will prove the stronger inequality 
\begin{equation}
\label{eq:DM_Chi_strong2}
\P_{\beta_2}(|K|\geq n) \leq 2\P_{\beta_1}(|K|\geq n) + \frac{4}{\beta_1}|\beta_2-\beta_1|^2 \sum_{k=1}^n \P_{\beta_1}(|K|\geq k).
\end{equation}
as stated in \eqref{eq:DM_Chi_strong}, from which the claimed inequality follows by Markov's inequality.
Fix $0< \beta_1 < \beta_2$ and $n\geq 0$. 
We apply \cref{thm:DewanMuirhead} taking $A=\{|K|\geq n\}$ and taking $T$ to be a decision tree that explores the cluster of the origin one edge at a time, stopping if and when it first finds $n$ vertices in $K$; see e.g.\ the proof of \cite[Proposition 3.1]{1901.10363} for a formal definition. This decision tree can only query edges with at least one endpoint in $K$, and the edges queried by $T$ all have endpoints in some set of size at most $n$. Using the assumption that $\sum_{e\in E^\rightarrow_o} J_e =1$, it follows that
\[
\sum_{e\in E} J_e \cdot \operatorname{Rev}_{\beta_1}(T,e) \leq \E_{\beta_1} \left[ |K| \wedge n \right] =\sum_{k=1}^n \P_{\beta_1}(|K|\geq k)
\]
and hence by \cref{thm:DewanMuirhead} that
\begin{equation}
|\P_{\beta_2}(|K|\geq n)-\P_{\beta_1}(|K|\geq n)|^2 \leq \frac{1}{ \beta_1 } |\beta_2-\beta_1|^2  \P_{\beta_2}(|K|\geq n) \sum_{k=1}^n \P_{\beta_1}(|K|\geq k).
\label{eq:DM_Chi_almost_done}
\end{equation}
If $\P_{\beta_2}(|K|\geq n) \leq 2\P_{\beta_1}(|K|\geq n)$ then \eqref{eq:DM_Chi_strong2} holds trivially, while if $\P_{\beta_2}(|K|\geq n) \geq 2\P_{\beta_1}(|K|\geq n)$ then it follows from \eqref{eq:DM_Chi_almost_done} that
\[
 \P_{\beta_2}(|K|\geq n)^2 \leq 4 |\P_{\beta_2}(|K|\geq n)-\P_{\beta_1}(|K|\geq n)|^2 \leq \frac{4}{ \beta_1 } |\beta_2-\beta_1|^2 \P_{\beta_2}(|K|\geq n) \sum_{k=1}^n \P_{\beta_1}(|K|\geq k),
\]
so that the claimed inequality \eqref{eq:DM_Chi_strong2} holds in this case also. This completes the proof.
\end{proof}

\section{Applications to the hierarchical lattice}
\label{sec:hierarchical_proof}
In this section we prove our results concerning long-range percolation on the hierarchical lattice. In addition to \cref{thm:d/3}, we will also prove the following related theorem showing that the critical exponents describing the model are close to their mean-field values when $\alpha$ is only slightly larger than $d/3$. Note that when $\alpha>d/3$ the model is below its upper-critical dimension and mean-field critical behaviour does not hold \cite[Corollary 1.5]{hutchcroft2021critical}.

\begin{thm}
\label{thm:>d/3}
Let $J:\mathbb{H}^d_L \to [0,\infty)$ be a radially symmetric, integrable function, let $0<\alpha<d$, and suppose that there exist constants $c$ and $C$ such that $c \|x\|^{-d-\alpha} \leq J(x) \leq C\|x\|^{-d-\alpha}$ for every $x\in \mathbb{H}^d_L \setminus \{0\}$. If $d/3<\alpha<4d/11$ then
\begingroup
\begin{align}
\Chi_{\beta_c-\eps} &\preceq \eps^{-\alpha/(2d-5\alpha)} &\text{ as }\eps &\downarrow 0,
\\
\P_{\beta_c}\left( |K| \geq n\right) &\preceq n^{-(4d-11\alpha)/(4d-10\alpha)}&\text{ as }n&\uparrow \infty,  \text{ and}
\\
\P_{\beta_c+\eps}\left(|K|=\infty\right) &\preceq  \eps^{(4d-11\alpha)/(2d-5\alpha)}&\text{ as }\eps &\downarrow 0.
\end{align}
\endgroup
\end{thm}

The requirement that $\alpha < 4d /11$ is used only to ensure that the resulting bounds on $\P_{\beta_c}(|K|\geq n)$ and $\P_{\beta_c+\eps}(|K|=\infty)$ are non-trivial; we expect this threshold at $4d/11$ to be a feature of the proof that does not correspond to any true change in behaviour of the model. 
%
%
Indeed, power-law upper bounds on the same quantities have been proven for all $0<\alpha<d$ via completely different methods in our recent work \cite{hutchcroft2020power}. (See in particular \cite[Theorem 1.2]{hutchcroft2020power} for a very general result applying to the hierarchical lattice.) That paper does \emph{not} establish that these exponents approach their mean-field values as $\alpha \downarrow d/3$, or indeed that mean-field behaviour holds for $\alpha <d/3$. See \cref{fig:1d} for a detailed comparison of the two sets of results.

\pgfplotsset{width=0.45\textwidth}
\pgfplotsset{compat=newest}

\begin{figure}[t]
\centering
    \begin{tikzpicture}
\begin{axis}[
    axis lines = left,
    xlabel = $\alpha/d$,
    ylabel = {$\delta$},
    xmin=0, xmax=1,
    ymin=0, ymax=25
]
\addplot [
    domain=0:1/3, 
    samples=100, 
    color=purple,
    semithick
]
{2};
\addplot [
    domain=1/3:0.99, 
    samples=100, 
    color=red,
    semithick
]
{(1+x)/(1-x)};
\addplot [
    domain=0:0.99, 
    samples=100, 
    color=orange,
    semithick
    ]
    {(2+x)/(1-x)};
    \addplot [
    domain=0:0.99, 
    samples=100, 
    color=teal,
    semithick
    ]
    {2/(1-x)};
        \addplot [
    domain=1/3:0.363, 
    samples=100, 
    color=blue,
    semithick
    ]
    {(4-10*x)/(4-11*x)};
\end{axis}
\end{tikzpicture}
\hspace{1cm}
    \begin{tikzpicture}
\begin{axis}[
    axis lines = left,
    xlabel = $\alpha/d$,
    ylabel = {$\gamma$},
    xmin=0, xmax=1,
    ymin=0, ymax=25
]
\addplot [
    domain=0:1/3, 
    samples=100, 
    color=purple,
    semithick
]
{1};
\addplot [
    domain=0:0.99, 
    samples=100, 
    color=orange,
    semithick
    ]
    {(2+x)/(1-x)-1};
    \addplot [
    domain=0:0.99, 
    samples=100, 
    color=teal,
    semithick
    ]
    {2/(1-x)-1};
        \addplot [
    domain=1/3:0.399, 
    samples=100, 
    color=blue,
    semithick
    ]
    {x/(2-5*x)};
\end{axis}
\end{tikzpicture}
    \caption{
Estimates on the exponents (left) $\delta$ and $\gamma$ (right) defined by $\P_{\beta_c}(|K|\geq n) \approx n^{-1/\delta}$ as $n\to\infty$ and $\Chi_{\beta_c-\eps}\approx \eps^{-\gamma}$ as $\eps \downarrow 0$. Purple: The mean-field regime $d>3\alpha$, where $\delta=2$ and $\gamma=1$. Red: The conjectured true value $\delta=(d+\alpha)/(d-\alpha)$ for $d \leq 3\alpha$ (we are not aware of a conjectured exact value for $\gamma$ outside of the mean-field regime). Blue: The upper bounds $\delta \leq (4d-10\alpha)/(4d-11\alpha)$ and $\gamma \leq \alpha/(2d-5\alpha)$ of \cref{thm:d/3,thm:>d/3}. Orange: The upper bound $\delta \leq (2d+\alpha)/(d-\alpha)$ of \cite{hutchcroft2020power} and the associated bound on $\gamma$ obtained using the inequality $\gamma \leq \delta-1$ of \cite{1901.10363}. Teal: The upper bound $\delta \leq 2d/(d-\alpha)$ obtainable by inputting the two-point estimate of \cite{hutchcroft2021critical} into the method of \cite{hutchcroft2020power} as done in \cite[Section 3]{hutchcroft_sharp_LRP} and the associated bound on $\gamma$ obtained using the inequality $\gamma \leq \delta-1$.
}
    \label{fig:1d}
    \vspace{-1em}
\end{figure}

\medskip

For the remainder of the section we will fix $d\geq 1$, $L \geq 2$, $0<\alpha <d$ and a radially symmetric function $J:\bbH^d_L \to [0,\infty)$ satisfying $c \langle x \rangle^{-d-\alpha} \leq J(x) \leq C \langle x \rangle^{-d-\alpha}$ for some positive constants $c$ and $C$ and every $x\in \bbH^d_L \setminus \{0\}$.
We will assume without loss of generality that $\sum_{x\in \bbH^d_L} J(x)=1$ and write $\asymp$, $\preceq$, and $\succeq$ for equalities and inequalities holding to within positive multiplicative constants depending only on $d,L,\alpha,c$ and $C$.

\medskip

 Recall that for each $n \geq 1$ we write $\Lambda_n=\{x\in \bbH^d : \langle x \rangle \leq L^n\}$ for the ultrametric ball of radius $L^n$ containing the origin, noting that the restriction to $\Lambda_n$ of the weighted graph defined in terms of $\bbH^d_L$ and $J$ is itself transitive. (Indeed, the ball $\Lambda_n$ has the group structure of the torus $\bigoplus_{i=1}^n \mathbb{T}^d_L$.) Let $K_n$ denote the cluster of the origin in $\Lambda_n$, i.e., the set of vertices in $\Lambda_n$ that are connected to the origin by an open path contained in $\Lambda_n$. 
For each $\beta \geq 0$ and $n\geq 0$ we let $T_{\beta,n}$ denote the two-point matrix on $\Lambda_n$ and define $\Chi_{\beta,n} := \E_\beta |K_n| = \sum_{x\in \Lambda_n} T_{\beta,n}(0,x)$ and $\nabla_{\beta,n}:= \sum_{x,y \in \Lambda_n} T_{\beta,n}(0,x)T_{\beta,n}(x,y)T_{\beta,n}(y,0)$. It is proven in \cite[Corollary 1.2]{hutchcroft2021critical} that
\begin{equation}
\label{eq:hierarchical_Chi}
\Chi_{\beta_c,n} \asymp L^{\alpha n}
\end{equation}
for every $n \geq 0$, while 
 \cref{thm:main_chi} yields that
\begin{equation}
\label{eq:hierarchical_diffineq}
\frac{d \Chi_{\beta,n}}{d\beta}   \geq \frac{\Chi_{\beta,n}(\Chi_{\beta,n}-\nabla_{\beta,n})}{3 \beta^2 \nabla_{\beta,n}^2} 
\end{equation}
for every $\beta \geq 0$ and $n\geq 0$.

\medskip

We will apply the differential inequality \eqref{eq:hierarchical_diffineq} to study the growth of the \emph{correlation length} $\xi(\beta)$ as $\beta \uparrow \beta_c$. Before defining the correlation length, we first develop some general theory that will motivate the definition. Let $G=(V,E,J)$ be a countable weighted graph.
Following Duminil-Copin and Tassion \cite{duminil2015new}, for each $v\in V$, $\beta\geq 0$, and finite subset $S \subseteq V$  we consider the quantity
\[
\phi_\beta(S,v) := \sum_{e \in \partial^\rightarrow S} \Bigl(1-e^{-\beta J_e}\Bigr)\P_\beta(v \xleftrightarrow{S} e^-),
\]
where we write $\{x \xleftrightarrow{S} y\}$ to mean that $x$ and $y$ are connected by an open path all of whose vertices belong to $S$ and recall that $\partial^\rightarrow S$ denotes the set of oriented edges with $e^-\in S$ and $e^+\notin S$. When $G$ is infinite and transitive, 
Duminil-Copin and Tassion  \cite{duminil2015new} proved that the critical parameter $\beta_c$ admits the alternative characterisation
\begin{equation}
\label{eq:phibeta_betac}
\beta_c = \inf\Bigl\{ \beta \geq 0 : \phi_\beta(S,o) \geq 1 \text{ for every finite $S\ni o$}\Bigr\},
\end{equation}
where $o$ is an arbitrary fixed root vertex.
This characterisation is closely related to the following inequality, which slightly sharpens the analysis of \cite[Theorem 1.1, Item 2]{duminil2015new}.

\begin{lemma}
\label{lem:phi_beta_S}
Let $G=(V,E,J)$ be a countable weighted graph and let $S\subseteq \Lambda$ be finite subsets of $V$. Then
\[
\sum_{x\in \Lambda} \P_\beta(v \xleftrightarrow{\Lambda} x) \leq \sum_{x\in S} \P_\beta(v \xleftrightarrow{S} x) + \phi_\beta(S,v) \cdot \sup_{u\in \Lambda} \sum_{x\in \Lambda} \P_\beta(u \xleftrightarrow{\Lambda} x)
\]
for every $\beta\geq 0$ and $v\in S$.
\end{lemma}

\begin{proof}[Proof of \cref{lem:phi_beta_S}]
 Fix $\beta \geq 0$ and $v\in S$. For each $x\in \Lambda$, considering the edge crossed by an open path from $v$ to $x$ in $\Lambda$ as it leaves $S$ for the first time yields that
\[
\{v\xleftrightarrow{\Lambda} x\} \setminus \{v \xleftrightarrow{S} x\} \subseteq \bigcup_{e\in \partial^\rightarrow S} \{v \xleftrightarrow{S} e^-\}\circ\{e \text{ open}\} \circ \{e^+ \xleftrightarrow{\Lambda} x\}.
\] 
Thus, it follows by the BK inequality and a union bound that
\[
\P_\beta(v \xleftrightarrow{\Lambda} x) \leq \P(v \xleftrightarrow{S} x) + \sum_{e\in \partial^\rightarrow S} \P_\beta(v \xleftrightarrow{S} e^-) (1-e^{-\beta J_e})\P_\beta(e^+ \xleftrightarrow{\Lambda} x)
\]
for every $x\in \Lambda$. The claim follows by summing over $x \in \Lambda$.
\end{proof}

We now return to our setting of the hierarchical lattice as above. For each $n \geq 0$ we define
\[
\beta_n = \sup\left\{\beta \geq 0 : \phi_\beta(\Lambda_n,0) \leq \frac{1}{2} \right\} \qquad \text{ and } \qquad \beta^*_n = \max_{0 \leq m \leq n} \beta_m,
\]
so that $(\beta_n^*)_{n\geq 0}$ is a non-decreasing sequence.
Since $\phi_{\beta_c}(\Lambda_n,0) \geq 1$ for every $n\geq 0$ by \eqref{eq:phibeta_betac}, we have that $\beta^*_n< \beta_c$ for every $n\geq 0$.
For each $0\leq \beta<\beta_c$ we define the \textbf{correlation length} $\xi(\beta)$ by
\begin{equation}
\label{eq:correlation_length_definition}
\xi(\beta) = L^{n(\beta)} \qquad \text{ where } \qquad  n(\beta)=\inf\{n\geq 0 : \beta \leq \beta_n^*\}.
\end{equation}
We are justified in referring to this quantity as the correlation length by the following lemma, which also implies that $\xi(\beta) \uparrow \infty$ as $\beta \uparrow \beta_c$.

\begin{lemma}
\label{lem:thewindow}
$\Chi_\beta \asymp \Chi_{\beta,n} \asymp \Chi_{\beta_c,n} \asymp L^{\alpha n}$ for every $n\geq 0$ and $\beta_n^* \leq \beta \leq \beta_{n+1}^*$.
\end{lemma}

 Roughly speaking, this lemma shows that the two-point function is comparable to the critical two-point function for $x$ with $\langle x \rangle \leq \xi(\beta)$ and that scales above the correlation length do not contribute significantly to the susceptibility.

\begin{proof}[Proof of \cref{lem:thewindow}]
It follows from \cref{lem:phi_beta_S} and transitivity of $\Lambda_n$ that 
\[
\Chi_{\beta,N} \leq \Chi_{\beta,n} + \phi_\beta(\Lambda_n,0) \cdot \Chi_{\beta,N}
\]
for each $N \geq n \geq 0$ and $\beta \geq 0$, and hence that
\begin{equation}
\label{eq:Chi_in_the_window}
\Chi_\beta = \lim_{N\to\infty} \Chi_{\beta,N} \leq 2 \Chi_{\beta,n} \leq 2 \Chi_{\beta_c,n}
\end{equation}
for every $n \geq 0$ and $0\leq \beta \leq \beta_n$. In particular, if $\beta \leq \beta_{n+1}^*$ then $\beta\leq \beta_m$ for some $0\leq m \leq n+1$ and we have by \eqref{eq:hierarchical_Chi} and \eqref{eq:Chi_in_the_window} that
\[
\Chi_\beta \leq 2 \Chi_{\beta_c,m} \leq 2 \Chi_{\beta_c,n+1} \preceq L^{\alpha(n+1)} \preceq L^{\alpha n}.
\]
This implies that $ \Chi_{\beta,n} \leq \Chi_\beta \preceq \Chi_{\beta_c,n} \asymp L^{\alpha n}$ for every $\beta \leq \beta_{n+1}^*$. To complete the proof it suffices to prove that if $\beta \geq \beta_n^*$ then $\Chi_{\beta,n}\succeq L^{\alpha n}$. To this end, note that if $\beta\geq \beta_n^*$ then $\beta \geq \beta_n$ and hence by the definitions and \eqref{eq:hierarchical_Chi} that
\[
\frac{1}{2} \leq \phi_\beta(\Lambda_n,0) =  \sum_{y\in \bbH^d_L \setminus \Lambda_n} \Bigl(1-e^{-\beta J(y)} \Bigr) \cdot \E_\beta |K_n| \asymp \beta L^{-\alpha n} \cdot \E_\beta |K_n| = \beta L^{-\alpha n} \Chi_{\beta,n}.
\]
This rearranges to give the claimed lower bound.
\end{proof}


\cref{thm:d/3,thm:>d/3} will be deduced from \cref{lem:thewindow} together with the following proposition.

\begin{prop}
\label{prop:correlation_length}
Let $J:\mathbb{H}^d_L \to [0,\infty)$ be a radially symmetric, integrable function, let $0<\alpha<d$, and suppose that there exist constants $c$ and $C$ such that $c \|x\|^{-d-\alpha} \leq J(x) \leq C\|x\|^{-d-\alpha}$ for every $x\in \mathbb{H}^d_L \setminus \{0\}$. If $\alpha<2d/5$ then
\[
\xi(\beta) \preceq \begin{cases}
|\beta-\beta_c|^{-1/\alpha} & \alpha < d/3\\
|\beta-\beta_c|^{-1/\alpha}\left(\log \frac{1}{|\beta-\beta_c|}\right)^{2/\alpha}  & \alpha = d/3\\
|\beta-\beta_c|^{-1/(2d-5\alpha)} & \alpha > d/3
\end{cases}
\]
for every $0 \leq \beta < \beta_c$.
\end{prop}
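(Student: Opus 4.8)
The strategy is to feed the differential inequality \eqref{eq:hierarchical_diffineq} into the finite-volume quantities $\Chi_{n,\beta}$ and integrate it across the scale window $[\beta_n, \beta_{n+1}]$, using the a priori bound $\Chi_{n,\beta} \asymp L^{\alpha n}$ from \eqref{eq:two_point_in_the_window} to control all the terms. First I would record that, by Proposition~\ref{prop:triangle_comparison} applied on the transitive graph $\Lambda_n$ together with the tree-graph-type bound $\nabla_{n,\beta} \leq \Chi_{n,\beta}$, one has $1 \leq \nabla_{n,\beta} \leq \Chi_{n,\beta} \asymp L^{\alpha n}$ for $\beta_n \leq \beta \leq \beta_c$; in particular $\Chi_{n,\beta} - \nabla_{n,\beta}$ need not be positive, so the raw differential inequality is useless unless we can first guarantee that $\Chi_{n,\beta}$ is at least, say, twice $\nabla_{n,\beta}$. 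This is the crux: one needs a separate argument showing that the near-critical triangle is genuinely smaller than the near-critical susceptibility on the scale $\xi(\beta)$, and the natural source is \eqref{eq:hierarchical_two_point}, which gives $\nabla_{n,\beta_c} \asymp \sum_{x,y \in \Lambda_n} \langle x\rangle^{-d+\alpha}\langle x-y\rangle^{-d+\alpha}\langle y\rangle^{-d+\alpha} \asymp L^{(3\alpha-d)_+ n}$ when $\alpha \neq d/3$ and $\asymp n$ when $\alpha = d/3$ (the sum over the hierarchical lattice decomposing scale-by-scale). Combined with $\Chi_{n,\beta_c} \asymp L^{\alpha n}$ and monotonicity of both quantities in $\beta$, this yields $\nabla_{n,\beta}/\Chi_{n,\beta} \preceq L^{(2\alpha - d) n}$ for $\alpha < d/2$, which is $o(1)$ precisely when $\alpha < d/2$ — a weaker constraint than the hypothesis $\alpha < 2d/5$, so we have room. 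Hence for $\alpha < d/2$ there is an absolute constant $c_0$ and a scale $n_0$ such that $\Chi_{n,\beta} - \nabla_{n,\beta} \geq \tfrac12 \Chi_{n,\beta}$ whenever $n \geq n_0$.

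Next I would integrate. Fix $n$ large and $\beta \in [\beta_n, \beta_{n+1})$. From \eqref{eq:hierarchical_diffineq} and the bound just established,
\[
\frac{d\Chi_{n,\lambda}}{d\lambda} \geq \frac{\Chi_{n,\lambda}^2}{6\lambda^2 \nabla_{n,\lambda}^2} \geq \frac{\Chi_{n,\lambda}^2}{6\beta_c^2 \nabla_{n,\beta_c}^2}
\]
for $\lambda \in [\beta_{n-1}, \beta_c)$, say, using monotonicity $\nabla_{n,\lambda} \leq \nabla_{n,\beta_c}$ and $\lambda \leq \beta_c$. Dividing by $\Chi_{n,\lambda}^2$ and integrating from $\beta_{n}$ up to $\beta_c$ gives
\[
\frac{1}{\Chi_{n,\beta_{n}}} \geq \frac{1}{\Chi_{n,\beta_{n}}} - \frac{1}{\Chi_{n,\beta_c}} \geq \frac{\beta_c - \beta_{n}}{6\beta_c^2 \nabla_{n,\beta_c}^2},
\]
so that $\beta_c - \beta_{n} \preceq \nabla_{n,\beta_c}^2 / \Chi_{n,\beta_{n}} \asymp \nabla_{n,\beta_c}^2 L^{-\alpha n}$. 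Plugging in the three regimes for $\nabla_{n,\beta_c}$: for $\alpha < d/3$ this is $\preceq L^{-\alpha n}$; for $\alpha = d/3$ it is $\preceq n^2 L^{-\alpha n}$; for $\alpha > d/3$ it is $\preceq L^{(6\alpha - 2d - \alpha)n} = L^{(5\alpha - 2d)n}$, which is the exponent that forces the condition $\alpha < 2d/5$ (so the exponent $2d - 5\alpha$ is positive and $\beta_c - \beta_n \to 0$). Inverting these relations — recalling $\xi(\beta) = L^{n(\beta)}$ and that for $\beta \in [\beta_{n}, \beta_{n+1})$ one has $n(\beta) = n+1$ — gives $L^{n} \preceq (\beta_c - \beta)^{-1/\alpha}$, resp. $(\beta_c-\beta)^{-1/\alpha}(\log \tfrac{1}{\beta_c - \beta})^{2/\alpha}$, resp. $(\beta_c - \beta)^{-1/(2d-5\alpha)}$, which is exactly the claimed bound on $\xi(\beta)$. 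The logarithmic regime requires a little care: from $\beta_c - \beta_n \preceq n^2 L^{-\alpha n}$ one solves for $n$ in terms of $\eps := \beta_c - \beta$ by first getting $n \preceq \tfrac{1}{\alpha}\log_L(1/\eps) + 2\log_L n$, then bootstrapping $\log n \preceq \log\log(1/\eps)$ and reabsorbing, yielding $n \le \tfrac1\alpha \log_L(1/\eps) + O(\log\log(1/\eps))$, hence $L^n \preceq \eps^{-1/\alpha}(\log 1/\eps)^{2/\alpha}$ as stated.

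The main obstacle is the positivity issue flagged above: without the scale-dependent input $\nabla_{n,\beta_c}/\Chi_{n,\beta_c} = o(1)$, the differential inequality \eqref{eq:hierarchical_diffineq} carries no information, and establishing this asymptotic rests entirely on the sharp two-point estimate \eqref{eq:hierarchical_two_point} from \cite{hutchcroft2021critical} together with a careful scale-by-scale evaluation of the triangle sum on $\bbH^d_L$ — the computation $\nabla_{n,\beta_c} \asymp L^{(3\alpha-d)_+ n}$ (with the borderline logarithm at $\alpha = d/3$) is where all the arithmetic lives. A secondary subtlety is handling $\beta$ in the range $[0,\beta_1]$ and small $n$ uniformly, where the constants degenerate; here one simply notes $\xi(\beta)$ is bounded on any interval $[0, \beta_c - \delta]$, so all estimates are trivially true after adjusting the implicit constant, and only the large-$n$ (equivalently $\beta \uparrow \beta_c$) regime carries content.
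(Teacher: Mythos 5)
Your proof is correct and follows essentially the same route as the paper: both reduce the claim to bounding $|\beta_n-\beta_c|$, both identify the key input that $\nabla_{n,\beta_c}/\Chi_{n,\beta_c} = o(1)$ for $\alpha<d/2$ (via the scale-by-scale triangle estimate of \cite[Equation 2.19]{hutchcroft2021critical} together with \eqref{eq:hierarchical_Chi}), and both integrate the finite-volume differential inequality \eqref{eq:hierarchical_diffineq} over $[\beta_n,\beta_c]$ while controlling $\Chi_{n,\beta}$ by \eqref{eq:two_point_in_the_window}. The only cosmetic difference is that you integrate $-\tfrac{d}{d\lambda}(1/\Chi_{n,\lambda})$ whereas the paper lower-bounds $d\Chi_{n,\lambda}/d\lambda$ by a constant depending on $n$ and integrates that directly; the two give the same asymptotic relation $|\beta_c-\beta_n| \preceq \nabla_{n,\beta_c}^2 L^{-\alpha n}$. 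One small inaccuracy worth flagging: the parenthetical claim that a ``tree-graph-type bound'' gives $\nabla_{n,\beta}\leq\Chi_{n,\beta}$ is not justified as stated (the elementary bound is $\nabla_\beta\leq\Chi_\beta^2$, not $\leq\Chi_\beta$), but this remark plays no role — your actual argument for $\nabla\ll\Chi$ correctly rests on the explicit power estimates, exactly as in the paper.
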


\begin{remark}
It follows from \cref{lem:thewindow} together with the mean-field lower bound $\Chi_\beta \succeq |\beta-\beta_c|^{-1}$ that the correlation length always satisfies the mean-field lower bound $\xi(\beta) \succeq (\beta_c-\beta)^{-1/\alpha}$ for every $0\leq \beta < \beta_c$, no matter what value $\alpha$ takes between $0$ and $d$.
\end{remark}

\begin{proof}[Proof of \cref{prop:correlation_length}]
It suffices to prove that there exists a constant $n_0$ such that the estimate
\[
|\beta_n^* - \beta_c| \preceq \begin{cases} L^{-\alpha n} & \alpha < d/3\\
(n+1)^2 L^{-\alpha n} & \alpha = d/3\\
L^{-(2d-5\alpha)n} & \alpha > d/3
\end{cases}
\]
holds for every $n\geq n_0$.
It follows from \cref{lem:thewindow} and \eqref{eq:hierarchical_Chi} that there exist positive constants $c_1$ and $C_1$ such that $c_1 L^{\alpha n} \leq \Chi_{\beta,n} \leq C_1L^{\alpha n}$ for every $n\geq 0$ and $\beta_n^* \leq \beta \leq \beta_c$.
On the other hand, it is an immediate consequence of \eqref{eq:hierarchical_Chi} as shown in \cite[Equation 2.19]{hutchcroft2021critical} that
 \begin{equation}
 \label{eq:nabla_n}
1\leq \nabla_{\beta,n} \leq \sum_{x,y \in \Lambda_n} T_{\beta_c}(0,x)T_{\beta_c}(x,y)T_{\beta_c}(y,0) \preceq \begin{cases}
1 & \alpha < d/3\\
n+1 & \alpha = d/3\\
L^{(3\alpha-d)n} & \alpha > d/3
\end{cases}
 \end{equation}
for every $n\geq 0$ and $0 \leq \beta \leq \beta_c$. Since $\alpha < 2d/5 \leq d/2$ we deduce that $\nabla_{\beta_c,n} = o(\Chi_{\beta_n^*,n})$ as $n\to \infty$ and hence by \eqref{eq:hierarchical_diffineq} that there exist positive constants $c_2$, $c_3$ and $n_0$ such that
\[
\frac{ d \Chi_{\beta,n}}{d \beta} \geq c_2 \frac{\Chi_{\beta,n}^2}{\nabla_\beta^2} \geq c_3 \cdot \begin{cases}
L^{2\alpha n} & \alpha < d/3\\
\frac{1}{(n+1)^2} L^{2\alpha n} & \alpha = d/3\\
L^{(2d-4\alpha)n} & \alpha > d/3
\end{cases}
\]
for every $n\geq n_0$ and every $\beta_n^* \leq \beta \leq \beta_c$. Integrating this inequality between $\beta_n^*$ and $\beta_c$ yields that
\[
\Chi_{\beta_c,n} \geq c_3 |\beta_c-\beta_n^*| \cdot \begin{cases}
L^{2\alpha n} & \alpha < d/3\\
\frac{1}{(n+1)^2} L^{2\alpha n} & \alpha = d/3\\
L^{(2d-4\alpha)n} & \alpha > d/3
\end{cases}
\]
and the claim follows by rearranging and applying \eqref{eq:hierarchical_Chi} to estimate the left hand side.
\end{proof}

\begin{proof}[Proof of \cref{thm:d/3,thm:>d/3}]
It follows immediately from \cref{lem:thewindow} and \eqref{prop:correlation_length} that
\[
\Chi_\beta \preceq \xi(\beta)^\alpha \preceq 
 \begin{cases}
|\beta-\beta_c|^{-1} & \alpha < d/3\\
|\beta-\beta_c|^{-1}\left(\log \frac{1}{|\beta-\beta_c|}\right)^{2}  & \alpha = d/3\\
|\beta-\beta_c|^{-\alpha/(2d-5\alpha)} & \alpha > d/3
\end{cases}
\]
for every $0<\beta < \beta_c$. The remaining claims follow easily from \cref{thm:DM_Chi} as in the proof of \cref{cor:logs}.
\end{proof}

\begin{remark}
The tree-graph estimates \eqref{eq:threepoint} and \eqref{eq:fourpoint} on the three- and four-point functions used to prove \eqref{eq:diagram_derivation1} are not sharp outside of the mean-field regime, and one can  improve the exponent estimates of \cref{thm:>d/3} by instead using the techniques of \cite{hutchcroft2021critical} to directly bound the critical four-point function in the hierarchical case. The improvements we were able to obtain via this method were rather modest and we have chosen not to pursue this further here.
\end{remark}

\subsection*{Acknowledgements}
This work was carried out while the author was a Senior Research Associate at the University of Cambridge, and was supported in part by ERC starting grant 804166 (SPRS). We thank Vivek Dewan, Emmanuel Michta, Stephen Muirhead, and Gordon Slade for helpful comments on a previous version of the manuscript.

  \bibliographystyle{abbrv}
  \bibliography{unimodularthesis.bib}

\appendix

\section{Comparing diagrams: a sojourn into linear algebra}
\label{sec:linear_algebra}

In this section we prove \cref{prop:triangle_comparison} in full generality, removing all the Fourier analysis from the proof given for the Euclidean and hierarchical cases in \cref{sec:mainproof}. 
We will use the alternative expressions for $A_\beta$ and $B_\beta$ derived from the mass-transport principle in \eqref{eq:Adefunimod} and \eqref{eq:Bdefunimod} throughout our calculations.
While we have made this section into an appendix due to its tangential relationship to the rest of the paper, we stress that the tools introduced here are in our view highly elegant and, in some cases, do not appear to have been used in probability theory or classical statistical mechanics before. 

\medskip

The general proof of \cref{prop:triangle_comparison} will be based on the theory of infinite positive (semi)definite matrices and the Schur product theorem, with a particularly important role being played by a theorem of Ando \cite{MR535686} concerning the interplay between the \emph{Hadamard product} (i.e., the entrywise product) and the usual matrix product. Since the results we use are spread around several disparate sources and since we expect the content of this section to be highly unfamiliar to most of our audience, we give a thorough and self-contained treatment of most of the results that we use. A secondary purpose of this self-contained account is to prove infinite-dimensional cases of various results that are usually stated only in the finite-dimensional case. In addition to the paper on Ando \cite{MR535686}, the primary sources we used to prepare this section were \cite{MR2284176} and \cite[Appendix A]{MR2064921}, both of which are written in a very accessible manner.

\medskip

Let $V$ be a countable (finite or infinite) set, and denote by $\cM(V) = \R^{V \times V}$ the set of all matrices indexed by $V$. Let $L^0(V)$ be the set of \emph{finitely supported} functions $L^0(V) := \{ f \in \R^V : f(v) =0 $ for all but finitely many $v\}$, so that every $T \in \cM(V)$ defines a linear map
\[\begin{array}{rcl} L^0(V) &\longrightarrow & \R^V\\
f & \longmapsto & Tf
\end{array} \qquad  \text{ where } \qquad Tf(u)=\sum_{v\in V} T(u,v) f(v) \quad \text{ for each $u\in V$}
\]
in the natural way, where the assumption that $f \in L^0(V)$ means that convergence is not an issue.

\medskip

A matrix $T \in \cM(V)$ is said to be \textbf{symmetric} if $T(u,v) = T(v,u)$ for every $u,v \in V$. A matrix $T \in \cM(V)$ is said to be \textbf{positive semidefinite} if it is symmetric and satisfies $\langle T f, f \rangle := \sum_{u,v \in V} f(u) T(u,v) f(v) \geq 0$ for every $f \in L^0(V)$  and is said to be \textbf{positive definite} if there exists $\lambda>0$ such that $\langle T f, f \rangle \geq \lambda \langle f,f \rangle$ for every $f\in L^0(V)$. 
(Again, restricting to $f \in L^0(V)$ means that both inner products are trivially well-defined.) Given $S,T \in \cM(V)$, we write $S \geq T$ to mean that $S-T$ is positive semidefinite and write $S>T$ to mean that $S-T$ is positive definite.


\medskip

The relevance of these notions to percolation theory is established by the following lemma of Aizenman and Newman \cite[Lemma 3.3]{MR762034}. We include a proof for completeness.

\begin{lemma}[Aizenman and Newman]
\label{lem:positive_definite}
Let $G$ be a countable weighted graph. Then $T_\beta$ is positive \emph{semidefinite} for every $\beta \geq 0$. Moreover, if $G$ has $\sup_{v\in V} \sum_{e\in E^\rightarrow_v} J_e < \infty$ then $T_\beta$ is positive \emph{definite} for every $\beta \geq 0$.
\end{lemma}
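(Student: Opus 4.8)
The plan is to prove that the two-point matrix $T_\beta$ is positive semidefinite (and positive definite under the stated summability hypothesis) by writing it as a limit of explicitly nonnegative quadratic forms built from the percolation configuration. First I would fix $\beta \ge 0$ and a finitely supported $f \in L^0(V)$, and note that the key identity is
\[
\langle T_\beta f, f\rangle = \sum_{u,v\in V} f(u) \P_\beta(u\leftrightarrow v) f(v) = \E_\beta\!\left[\sum_{u,v\in V} f(u) \Ind{u\leftrightarrow v} f(v)\right],
\]
where the interchange of sum and expectation is justified because $f$ has finite support, so only finitely many terms appear. The point is that for a fixed percolation configuration $\omega$, the relation ``$u\leftrightarrow v$'' is an equivalence relation on $V$, so $\Ind{u\leftrightarrow v} = \sum_{C} \Ind{u\in C}\Ind{v\in C}$ where the sum is over the (at most countably many) open clusters $C$. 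Hence
\[
\sum_{u,v\in V} f(u)\Ind{u\leftrightarrow v}f(v) = \sum_{C} \Bigl(\sum_{u\in C} f(u)\Bigr)^2 \ge 0
\]
pointwise in $\omega$, and taking expectations gives $\langle T_\beta f, f\rangle \ge 0$. Symmetry of $T_\beta$ is immediate since $\{u\leftrightarrow v\} = \{v\leftrightarrow u\}$. This handles the semidefinite claim.

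For the positive definite claim under the hypothesis $\sup_{v\in V}\sum_{e\in E_v^\rightarrow} J_e < \infty$, I would isolate the diagonal contribution. Write $T_\beta = I + (T_\beta - I)$; here $T_\beta(v,v) = 1$ for all $v$, so $I$ contributes $\langle f,f\rangle$ to the quadratic form. It then suffices to show that $T_\beta - I$ is positive semidefinite, or more simply to bound $\langle T_\beta f, f\rangle$ below by a positive multiple of $\|f\|_2^2$ directly. The cleanest route: by the cluster decomposition above, $\langle T_\beta f, f\rangle = \E_\beta\sum_C (\sum_{u\in C}f(u))^2 \ge \E_\beta \sum_v f(v)^2 \Ind{\text{(the singleton term)}}$ — but this needs the event that $v$ is isolated to have positive probability, which it does precisely because $\P_\beta(v \text{ is isolated}) = \prod_{e\in E_v^\rightarrow}e^{-\beta J_e} = \exp(-\beta\sum_{e\in E_v^\rightarrow}J_e) \ge e^{-\beta M} =: \lambda > 0$ where $M = \sup_v \sum_{e\in E_v^\rightarrow}J_e$. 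More carefully, writing $C_v$ for the cluster of $v$, we have $\sum_C(\sum_{u\in C}f(u))^2 \ge \sum_v f(v)^2 \Ind{C_v = \{v\}}$ since on the event $C_v = \{v\}$ the singleton $\{v\}$ is one of the clusters $C$ and contributes exactly $f(v)^2$; summing over $v$ double-counts nothing because distinct isolated vertices give distinct clusters. Taking expectations,
\[
\langle T_\beta f, f\rangle \ge \sum_{v\in V} f(v)^2 \P_\beta(C_v = \{v\}) \ge \lambda \langle f, f\rangle,
\]
which is exactly positive definiteness with constant $\lambda = e^{-\beta M}$.

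The main obstacle, such as it is, is purely bookkeeping: making sure the cluster decomposition $\Ind{u\leftrightarrow v} = \sum_C \Ind{u\in C}\Ind{v\in C}$ and the resulting lower bound are handled with care when there are infinitely many clusters (the sum $\sum_C (\sum_{u\in C}f(u))^2$ is a sum of nonnegative terms, hence well-defined in $[0,\infty]$, and is in fact finite for $f\in L^0(V)$ since only finitely many clusters meet the support of $f$), and that the Fubini/Tonelli interchange of $\E_\beta$ with the finite sum over $\mathrm{supp}(f)\times\mathrm{supp}(f)$ is legitimate (it is, as the summand is bounded). Everything else is elementary. I would present the semidefinite case first in full and then note that the positive definite refinement only requires the extra observation about isolated vertices having uniformly positive probability.
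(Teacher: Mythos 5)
Your proof is correct and follows essentially the same route as the paper: the cluster decomposition $\langle T_\beta f,f\rangle = \E_\beta\sum_C(\sum_{u\in C}f(u))^2\ge 0$ for semidefiniteness, and restricting to singleton clusters with $\P_\beta(C_v=\{v\})=\exp(-\beta\sum_{e\in E^\rightarrow_v}J_e)\ge e^{-\beta M}>0$ for definiteness. The bookkeeping caveats you note are sound but elementary, exactly as the paper treats them.
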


\begin{proof}[Proof of \cref{lem:positive_definite}]
The matrix $T_\beta$ is trivially symmetric. Let $\sC$ be the set of clusters of $\omega$, each of which is a set of vertices. Then for each finitely supported $f: V \to \R$ we have that
\begin{align}
\langle T_\beta f, f \rangle &= \sum_{u,v \in V} f(u)f(v)\P_\beta(u \leftrightarrow v) = \E_\beta\left[ \sum_{u,v \in V} f(u)f(v)\mathbbm{1}(u \leftrightarrow v) \right]
\nonumber
\\
&= \E_\beta\left[ \sum_{C \in \sC} \Bigl(\sum_{u\in C} f(u) \Bigr)^2\right] \geq 0,
\label{eq:positive_definite_proof}
\end{align}
so that $T_\beta$ is positive semidefinite as required. Similarly, letting 
 $\sC_1 \subseteq \sC$ be the set of singleton clusters of $\omega$, that is, clusters containing exactly one vertex, we have by \eqref{eq:positive_definite_proof} that
\begin{multline}
\langle T_\beta f, f \rangle \geq \E_\beta\left[ \sum_{C \in \sC_1} \Bigl(\sum_{u\in C} f(u) \Bigr)^2\right] = \sum_{u \in V} f(u)^2 \P_\beta(\{u\} \text{ is a singleton cluster}\}) \\ =  \sum_{u \in V} f(u)^2 \prod_{e\in E^\rightarrow_v} e^{-\beta J_e}
\end{multline}
which is  easily seen to establish the second claim.
\end{proof}

\begin{remark}
The proof that the two-point matrix is positive semidefinite works for \emph{any} percolation process, that is, any random subgraph of a given graph, and is not specific to Bernoulli percolation. Similarly, the proof of positive definiteness applies to any percolation process in which the probability that a vertex lies in a singleton cluster is bounded away from zero.
\end{remark}


Since we will want to consider \emph{powers} of positive semidefinite matrices, it will be useful to work within an algebra where such powers are well-defined.
Define $\cM^2(V) \subseteq \cM(V)$ to be the algebra of $V$-indexed matrices $T$ such that 
%
%
\[
\|T\|_{2\to 2} := \sup \left\{ \frac{\langle Tf,Tf\rangle^{1/2}}{\langle f,f\rangle^{1/2}} : f\in L^0(V), \langle f ,\, f \rangle > 0 \right\} < \infty.
\]
Since $L^0(V)$ is dense in $L^2(V)$, each matrix $T\in \cM^2(V)$ defines a unique bounded linear operator $f\mapsto Tf$ on $L^2(V)$. Conversely, we can always recover a matrix from its associated operator using the formula $T(u,v) = \langle T \mathbbm{1}_v,\mathbbm{1}_u\rangle$; for $S,T \in \cM^2$ the product of $S$ and $T$ as operators coincides with the usual matrix product $ST(u,v)=\sum_w S(u,w) T(w,v)$.
A symmetric matrix $T \in \cM^2(V)$ is positive semidefinite if and only if $\langle T f, f \rangle \geq 0$ for every $f\in L^2(V)$ and is positive definite if and only if there exists $\lambda >0$ such that $\langle Tf, f \rangle \geq \lambda \langle f,f\rangle$ for every $f\in L^2(V)$. 

\medskip

It is a consequence of the Riesz-Thorin theorem (see e.g.\ \cite[Theorem 9.3.3]{MR2343341}) that
\[
\|T\|_{2\to 2} \leq \|T\|_{1\to 1}^{1/2} \|T\|_{\infty \to \infty}^{1/2} := \sqrt{\sup_{u \in V}\sum_{v\in V} |T(u,v)|}\sqrt{\sup_{u \in V}\sum_{v\in V} |T(v,u)|}
\]
for every $T \in \cM(V)$ (see e.g.\ \cite[Theorem 19 and Exercise 20]{van2014notes} for the fact that the $1\to 1$ and $\infty\to\infty$ norms can be expressed as the maximum $\ell^1$ norm of a column and of a row respectively) and when $T$ is symmetric this simplifies to
\[
\|T\|_{2\to 2} \leq \|T\|_{1\to 1} := \sup_{u \in V}\sum_{v\in V} |T(u,v)|.
\]
In particular, it follows that if $G$ is a transitive weighted graph then the associated two-point matrix $T_\beta$ satisfies $\|T_\beta\|_{2\to 2} \leq \|T_\beta\|_{1\to 1} = \Chi_\beta$ for every $\beta\geq 0$ and hence by the sharpness of the phase transition that $T_\beta$ defines a bounded operator on $L^2(V)$ for every $0\leq \beta <\beta_c$. (In fact it is possible in some contexts for this boundedness to continue hold for $\beta$ strictly larger than $\beta_c$, see \cite{MR4162843} for a thorough discussion.)
Moreover, it follows by spectral considerations that if $T \in \cM^2(V)$ is positive (semi)definite then so is $T^k$ for every $k\geq 1$, so that if $G$ is an infinite, connected, locally finite, quasi-transitive graph then $T_\beta^k$ is positive definite for every $0 \leq \beta < \beta_c$ and $k\geq 1$.

\medskip

Let us now prove the easy part of \cref{prop:triangle_comparison}.

\begin{lemma}
\label{lem:Abound}
Let $G$ be a connected, unimodular transitive weighted graph. Then
$A_\beta \leq \nabla_\beta^{2}$ for every $0 \leq \beta \leq \beta_c$.
\end{lemma}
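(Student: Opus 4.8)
The plan is to prove $A_\beta \le \nabla_\beta^2$ by the abstract, Fourier-free analogue of the $\Z^d$ argument, using only positive semidefiniteness of $T_\beta$ and the mass-transport principle. Using the unimodular expression \eqref{eq:Adefunimod}, write
\[
A_\beta = \sum_{x\in V} T_\beta(o,x)\, T_\beta^2(o,x)\, T_\beta^3(o,x) = \bigl\langle (T_\beta \odot T_\beta^2)\, \delta_o,\; T_\beta^3 \delta_o \bigr\rangle,
\]
where $\odot$ denotes the Hadamard (entrywise) product; similarly $\nabla_\beta = \sum_x T_\beta(o,x)T_\beta^2(o,x) = \langle (T_\beta\odot T_\beta^2)\delta_o,\delta_o\rangle$ and $\nabla_\beta = T_\beta^3(o,o) = \langle T_\beta^3\delta_o,\delta_o\rangle$. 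So the inequality $A_\beta \le \nabla_\beta^2$ says exactly that the $(o,o)$ entry of the matrix product $(T_\beta\odot T_\beta^2) T_\beta^3$ is bounded by the product of the $(o,o)$ entries of $T_\beta\odot T_\beta^2$ and of $T_\beta^3$.

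First I would record the two structural facts that make this work: (i) by \cref{lem:positive_definite} and the spectral remark following it, $T_\beta$, $T_\beta^2$, $T_\beta^3$ are all positive semidefinite bounded operators on $L^2(V)$ for $\beta<\beta_c$ (and the case $\beta=\beta_c$ follows by monotone convergence, letting $\beta\uparrow\beta_c$, since all quantities are increasing in $\beta$); and (ii) by the Schur product theorem, $T_\beta\odot T_\beta^2$ is positive semidefinite as well. For a positive semidefinite matrix $P$ one has the elementary Cauchy–Schwarz bound $P(o,x)^2 \le P(o,o)P(x,x)$, and by transitivity $P(x,x)=P(o,o)$ for all $x$, so the diagonal entries are the maximal entries; equivalently $P(o,x) \le P(o,o)$ for all $x$ when $P$ is PSD and transitive with nonnegative entries. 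Apply this with $P = T_\beta^3$: then
\[
A_\beta = \sum_{x\in V} (T_\beta\odot T_\beta^2)(o,x)\, T_\beta^3(o,x) \le T_\beta^3(o,o) \sum_{x\in V}(T_\beta\odot T_\beta^2)(o,x) = \nabla_\beta \cdot \nabla_\beta = \nabla_\beta^2,
\]
using that all entries of $T_\beta\odot T_\beta^2$ are nonnegative so the sum is controlled termwise. This is precisely the content-free shadow of the Hölder computation in the $\Z^d$ proof (where "diagonal entries dominate" was phrased as non-negativity of $\hat\tau_\beta$ plus Hölder on the torus).

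The one technical point requiring care — and the main (minor) obstacle — is justifying that the diagonal-dominance bound $T_\beta^3(o,x) \le T_\beta^3(o,o)$ holds in the infinite-dimensional setting and that all the sums converge so the manipulations are legitimate. Convergence is not a problem here: $A_\beta, \nabla_\beta \in [1,\infty]$ are sums of nonnegative terms, so the inequality is meaningful even when both sides are infinite, and if $\nabla_\beta<\infty$ then $T_\beta\in\cM^2(V)$ forces everything in sight to be finite. For diagonal dominance, apply Cauchy–Schwarz in $L^2(V)$: $T_\beta^3(o,x) = \langle T_\beta^3\delta_o,\delta_x\rangle = \langle T_\beta^{3/2}\delta_o, T_\beta^{3/2}\delta_x\rangle \le \|T_\beta^{3/2}\delta_o\|\,\|T_\beta^{3/2}\delta_x\| = \sqrt{T_\beta^3(o,o)}\sqrt{T_\beta^3(x,x)} = T_\beta^3(o,o)$, where $T_\beta^{3/2}$ is the positive square root of the bounded PSD operator $T_\beta^3$ (available by the spectral theorem for bounded self-adjoint operators) and the last equality uses transitivity; when $\beta=\beta_c$ and $T_\beta^3$ is unbounded one instead takes the limit from below. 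I would then note that this lemma does not even need the Hadamard/Schur machinery — it only uses PSD-ness of $T_\beta^3$ and nonnegativity of the weights of $T_\beta\odot T_\beta^2$ — reserving the genuinely subtle Ando-type arguments for the harder inequality $B_\beta\le A_\beta$ that follows.
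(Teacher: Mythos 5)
Your argument is correct and essentially identical to the paper's: both reduce to the unimodular expression $A_\beta=\sum_x T_\beta(o,x)T_\beta^2(o,x)T_\beta^3(o,x)$, establish diagonal dominance $T_\beta^3(o,x)\leq T_\beta^3(o,o)=\nabla_\beta$ from positive semidefiniteness plus transitivity (the paper expands $\langle T_\beta^3(\mathbbm{1}_u-\mathbbm{1}_v),\mathbbm{1}_u-\mathbbm{1}_v\rangle\geq 0$ rather than using Cauchy--Schwarz on $T_\beta^{3/2}$, but these are equivalent), pull out $\nabla_\beta$ termwise using nonnegativity of the remaining factors, and handle $\beta=\beta_c$ by a limit. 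Your closing remark that the Schur product theorem is superfluous here matches the paper, which also omits it for this lemma.
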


\begin{proof}[Proof of \cref{lem:Abound}]

By left continuity of $T_\beta$, it suffices to prove the claim for every $0 \leq \beta < \beta_c$, in which case $T_\beta$ is both positive semidefinite and belongs to $\cM^2(V)$, so that $T_\beta^3$ is well-defined as an element of $\cM^2(V)$ and positive semidefinite also. Fix one such $\beta$. Since $G$ is unimodular, we have by \eqref{eq:Adefunimod} that 
$A_\beta = \sum_{v\in V} T_\beta(o,v) T_\beta^2(o,v) T_\beta^3(o,v).$ Since $T_\beta^3$ is positive definite, it has the property that 
\[\langle T_\beta^3(\mathbbm{1}_u-\mathbbm{1}_v) , (\mathbbm{1}_u-\mathbbm{1}_v) \rangle = T_\beta^3(u,u) + T_\beta^3(v,v) - 2 T_\beta^3(u,v) \geq 0\] for every $u,v \in V$ and hence by transitivity that
\[
T_\beta^3(u,v) \leq \frac{1}{2}\left[T_\beta^3(u,u) + T_\beta^3(v,v)  \right] = T_\beta^3(o,o)=\nabla_\beta
\]
for every $u,v \in V$. It follows that
\begin{align*}
 A_\beta &\leq  \nabla_\beta \sum_{v\in V} T_\beta(o,v) T_\beta^2(v,o) = \nabla_\beta T_\beta^3(o,o)=\nabla_\beta^2
\end{align*}
as claimed.
\end{proof}


It remains to prove that $B_\beta\leq A_\beta$. This proof will rely on various more sophisticated pieces of technology, which we now begin to introduce.
 Recall that if $S,T \in \cM(V)$  are  $V$-indexed matrices, the \textbf{Hadamard product} $S \circ T \in \cM(V)$ is defined by entrywise multiplication 
\[
[S \circ T](u,v) : = S(u,v)T(u,v) \qquad \text{ for every $u,v \in V$.}
\]
Note that if $S,T \in \cM^2(V)$ then $S\circ T \in \cM^2(V)$ also and indeed satisfies $\|S\circ T\|_{2\to2}\leq \|S\|_{2\to2}\|T\|_{2\to2}$ \cite[Equation 2.2]{MR2284176}.
The Hadamard product may be used to give various alternative expressions for the diagrams $A_\beta$ and $B_\beta$. The most obvious way to do this, which is analogous to what we did in \eqref{eq:FourierA} and \eqref{eq:FourierB}, is to use \eqref{eq:Adefunimod} and \eqref{eq:Bdefunimod} to write
\begin{align*}
A_\beta =\sum_{v\in V} \Bigl[T_\beta \circ T_\beta^2 \circ T_\beta^3\Bigr](o,v) 
\qquad and \qquad 
  B_\beta = \sum_{v\in V} \Bigl[T_\beta^{2} \circ T_\beta^2 \circ T_\beta^2 \Bigr](o,v).
\end{align*}
However, we will see that the equivalent expressions
\begin{align*}
A_\beta =  \Bigl[T_\beta \circ T_\beta^3\Bigr]T_\beta^2(o,o) 
\qquad and \qquad 
  B_\beta = \Bigl[T_\beta^{2} \circ T_\beta^2\Bigr]T_\beta^2(o,o)
\end{align*}
are better suited to our needs. 
We stress that these expressions contain both Hadamard products and ordinary matrix products: for example, $T_\beta^2$ denotes the ordinary square of $T_\beta$ while $[T_\beta \circ T_\beta^3]T_\beta^2$ denotes the ordinary matrix product of $T_\beta \circ T_\beta^3$ with $T_\beta^2$. 

\medskip

The key estimate needed to complete the proof of \cref{prop:triangle_comparison} is the following proposition, which can be thought of as a generalisation of the inequality \eqref{eq:FourierAndo}.

\begin{prop}
\label{prop:AndoTp}
Let $V$ be a countable set and let $T \in \cM^2(V)$ be positive semidefinite. Then
 \[T^2 \circ T^2  \leq T \circ T^3.\]
\end{prop}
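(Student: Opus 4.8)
To prove \cref{prop:AndoTp}, the plan is to mirror the Fourier proof of the $\Z^d$ case given at the start of this section. There, the inequality came from the pointwise bound $a^2b^2\leq\tfrac12(ab^3+a^3b)$ applied to the non-negative Fourier transform $\hat\tau_\beta$ and then ``convolved at the origin''. In the general setting I would replace the Fourier transform by passing to the Hilbert-space tensor square, replace the AM--GM step by the joint functional calculus for a commuting pair of positive self-adjoint operators, and replace integration at the origin by compression to the diagonal. Concretely, the scalar identity that is being lifted is $a^3b-2a^2b^2+ab^3=ab(a-b)^2\geq 0$ for $a,b\geq 0$.

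First I would form, on $L^2(V\times V)\cong L^2(V)\otimes L^2(V)$, the operators $X:=T\otimes I$ and $Y:=I\otimes T$. Since $T\in\cM^2(V)$ is positive semidefinite, a slicing argument shows that $X$ and $Y$ are bounded, self-adjoint and positive semidefinite; moreover they commute and satisfy $X^kY^\ell=T^k\otimes T^\ell$ for all $k,\ell\geq 0$. Because $X$ and $Y$ are commuting bounded self-adjoint operators, the continuous functional calculus of the commutative $C^\ast$-algebra they generate applies to them jointly (equivalently, one can write $W:=X^{1/2}Y^{1/2}(X-Y)$, which is self-adjoint since $X^{1/2}$, $Y^{1/2}$ and $X-Y$ pairwise commute, and observe that $W^2=XY(X-Y)^2$); either way this gives
\[
T^3\otimes T \;-\; 2\,T^2\otimes T^2 \;+\; T\otimes T^3 \;=\; XY(X-Y)^2 \;\geq\; 0
\]
as operators on $L^2(V\times V)$, the right-hand side being a non-negative element of the relevant function algebra (resp.\ a square of a self-adjoint operator).

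Next I would compress by the diagonal isometry $P:L^2(V)\to L^2(V\times V)$ determined by $P\mathbbm{1}_v=\mathbbm{1}_{(v,v)}$, using the elementary entrywise identity $P^\ast(S\otimes U)P=S\circ U$, which holds for all $S,U\in\cM^2(V)$ and is the natural companion of the Schur-multiplier bound $\|S\circ U\|_{2\to2}\leq\|S\|_{2\to2}\|U\|_{2\to2}$ of \cite[Equation 2.2]{MR2284176}, together with the fact that $M\geq 0$ implies $P^\ast M P\geq 0$. Conjugating the displayed inequality by $P$ and using linearity gives $T^3\circ T-2\,T^2\circ T^2+T\circ T^3\geq 0$; since the Hadamard product is commutative we have $T^3\circ T=T\circ T^3$, so this reads $2\,(T\circ T^3)\geq 2\,(T^2\circ T^2)$, i.e.\ $T^2\circ T^2\leq T\circ T^3$, as required.

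The step I expect to require the most care is not the algebra but the operator-theoretic bookkeeping in infinite dimensions: verifying that $T\otimes I$ and $I\otimes T$ really are bounded self-adjoint operators on $L^2(V\times V)$, that the joint continuous functional calculus is available for bounded (not necessarily compact or finite-rank) commuting self-adjoint operators, and that the compression identity $P^\ast(S\otimes U)P=S\circ U$ is valid at the level of matrix entries for infinite $V$. None of these is deep, but each must be stated cleanly, which is presumably why the surrounding section builds up the infinite-dimensional positive-definite and Schur-product toolkit in a self-contained way rather than quoting finite-dimensional references. An alternative would be to invoke a Hadamard-product inequality of Ando type directly in the form $S^2\circ T^2\leq\tfrac12(S\circ T^3+S^3\circ T)$ for positive semidefinite $S,T$ and then specialise $S=T$; but the commuting tensor-square argument above is what I would write down, since it exposes the mechanism and keeps the proof self-contained.
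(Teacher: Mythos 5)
Your proof is correct, and it takes a genuinely different route from the paper's. The paper obtains $T^2\circ T^2\leq T\circ T^3$ as the case $S=T$, $T\mapsto T^3$ of Ando's inequality $(ST)^{1/2}\circ(ST)^{1/2}\leq S\circ T$ for commuting positive semidefinite matrices, and proves Ando's inequality via $2\times 2$ block matrices: a congruence criterion for positive semidefiniteness of a block matrix, the Schur product theorem applied to two judiciously chosen blocks, and an extraction lemma. You instead work directly on $L^2(V\times V)$ with the commuting positive operators $X=T\otimes I$ and $Y=I\otimes T$, observe that $T^3\otimes T-2\,T^2\otimes T^2+T\otimes T^3=\bigl(X^{1/2}Y^{1/2}(X-Y)\bigr)^2\geq 0$, and compress to the diagonal via the isometry $P$ using $P^\ast(S\otimes U)P=S\circ U$. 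The two arguments ultimately rest on the same positivity principle---compressing a positive operator on the tensor square is precisely the tensor-product proof of the Schur product theorem the paper alludes to in passing---but you have short-circuited the block-matrix machinery. Your version makes the driving scalar inequality $ab(a-b)^2\geq 0$ explicit, and because it uses only square roots (never inverses) it handles the merely positive semidefinite case directly, whereas the paper's proof of Ando's theorem writes $S=(ST)^{1/2}T^{-1}(ST)^{1/2}$ and so implicitly requires a perturbation $T\mapsto T+\epsilon I$ when $T$ is not invertible (harmless here, since the relevant $T_\beta$ is positive definite, but still a step you bypass). What the paper's route buys in exchange is the full statement of Ando's theorem together with the block-matrix toolkit as reusable infrastructure; your argument is more targeted and self-contained for this one inequality. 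Both proofs are sound.
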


We will deduce this proposition as a special case of a theorem of Ando \cite{MR535686}, which in turn is a (non-obvious) consequence of the Schur product theorem. While all these theorems are usually stated for finite matrices, they are also valid for infinite matrices; indeed, we will see that the infinite cases of these theorems can be immediately reduced to the finite cases.

\begin{theorem}[Schur product theorem]
\label{thm:Shurproduct}
Let $V$ be countable, and let $S,T \in \cM(V)$. If $S$ and $T$ are both positive semidefinite, then $S \circ T$ is positive semidefinite.
\end{theorem}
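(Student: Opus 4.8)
The plan is to reduce the statement to the classical finite-dimensional Schur product theorem and then prove that finite case by exhibiting the Hadamard product as a Gram matrix.

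First I would record that $S \circ T$ is symmetric, which is immediate since $S$ and $T$ are. Since positive semidefiniteness of a matrix in $\cM(V)$ is by definition tested only against finitely supported functions, it suffices to show $\langle (S \circ T) f, f \rangle \ge 0$ for an arbitrary $f \in L^0(V)$; fixing such an $f$ and letting $F = \operatorname{supp}(f)$, a finite subset of $V$, this quantity equals $\langle (S_F \circ T_F)\, f|_F,\, f|_F \rangle$, where $S_F, T_F \in \cM(F)$ denote the principal submatrices indexed by $F \times F$. Extending functions on $F$ by zero to elements of $L^0(V)$ shows that $S_F$ and $T_F$ are themselves positive semidefinite, so the problem reduces to the case in which $V = F$ is a finite set, i.e.\ to finite real symmetric matrices.

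For the finite case I would use the Gram representation of positive semidefinite matrices: writing $S_F = P^{\mathsf{T}} P$ and $T_F = Q^{\mathsf{T}} Q$ for suitable real matrices $P, Q$ (for instance $P = S_F^{1/2}$), and letting $a_u, b_u$ be the $u$-th columns of $P, Q$, one has $S_F(u,v) = \langle a_u, a_v\rangle$ and $T_F(u,v) = \langle b_u, b_v\rangle$ for all $u, v \in F$. Then
\[
[S_F \circ T_F](u,v) = \langle a_u, a_v\rangle\,\langle b_u, b_v\rangle = \langle a_u \otimes b_u,\ a_v \otimes b_v\rangle,
\]
so $S_F \circ T_F$ is the Gram matrix of the family $\{a_u \otimes b_u\}_{u \in F}$ in the relevant tensor product space; for any $g \colon F \to \R$ we then obtain $\langle (S_F \circ T_F) g, g\rangle = \bigl\| \sum_{u \in F} g(u)\, a_u \otimes b_u \bigr\|^2 \ge 0$, which proves positive semidefiniteness. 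An equivalent route in the finite case is to observe that $S_F \circ T_F$ is the principal submatrix of the Kronecker product $S_F \otimes T_F$ on the index set $\{(u,u) : u \in F\}$, and that $S_F \otimes T_F$ is positive semidefinite because its eigenvalues are the pairwise products of the (non-negative) eigenvalues of $S_F$ and $T_F$.

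I do not expect a genuine obstacle here: the finite case is classical, and the only step warranting attention is the reduction to finitely supported test functions. That step is harmless precisely because the paper has defined positive semidefiniteness in $\cM(V)$ via $L^0(V)$, so the infinite case carries no content beyond the finite one and there are no convergence issues to address.
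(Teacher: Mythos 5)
Your reduction to the finite case via principal submatrices is exactly what the paper does, and your Gram-matrix (and Kronecker-product) argument is one of the standard proofs of the finite case that the paper invokes by reference rather than spelling out. The proposal is correct and follows essentially the same route.
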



\begin{proof}[Proof of \cref{thm:Shurproduct}]
Recall that if $V$ is a countable set and $T \in \cM(V)$ the matrices $T|_W \in \cM(W)$ where $W \subseteq V$ and $T|_W(u,v)= T(u,v)$ for every $u,v \in W$ are referred to as the \textbf{principal submatrices} of $T$.
By the definitions, a matrix $T \in \cM(V)$ is positive definite if and only if all its finite dimensional principal submatrices are. Thus, since the Hadamard product commutes with taking principal submatrices in the sense that $S|_W \circ T|_W = (S\circ T)|_W$ for every $S,T \in \cM(V)$ and $W \subseteq V$, it suffices to consider the case that $V$ is finite. That is, the countably infinite case of the Schur product theorem immediately reduces to the finite case, which is classical; see \url{https://en.wikipedia.org/wiki/Schur_product_theorem} for three distinct proofs. (A further proof works by observing that $S \circ T$ is a principal submatrix of the tensor product $S \otimes T$, which in turn is easily seen to be positive definite as a direct consequence of the relevant definitions.)
\qedhere

\end{proof}

Before stating Ando's theorem on Hadamard products, 
we recall that for every positive semidefinite matrix $T \in \cM^2(V)$  there exists a unique positive semidefinite matrix $T^{1/2} \in \cM^2(V)$, known as the \textbf{principal square root} of $T$, such that $(T^{1/2})^2 =T$. (Be careful to note that there may exist other matrices $S$ for which $S^2=T$ or $S S^\perp=T$, but any such matrix other than $T^{1/2}$ will not be  positive semidefinite.) Moreover, the principal square root $T^{1/2}$ of $T$ commutes with $T$ under multiplication in the sense that $T^{1/2}T=TT^{1/2}$. Indeed, the matrix $T^{1/2}T$ is also equal to the principal square root of $T^3$ and is denoted simply by $T^{3/2}$. Similarly, one may define for each $\alpha \geq 0$ a symmetric, positive semidefinite matrix $T^\alpha$ such that $(T^\alpha)_{\alpha\geq 0}$ is a semigroup in the sense that $T^\alpha$ depends continuously on $\alpha$ and $T^\alpha T^\beta=T^\beta T^\alpha = T^{\alpha+\beta}$ for every $\alpha,\beta \geq 0$. All of these claims are immediate consequences of the spectral theorem for bounded self-adjoint operators, see e.g.\ \cite[Chapter VII]{MR751959} for further background. Similar spectral considerations imply that every positive definite matrix $T \in \cM^2(V)$ has a well-defined inverse $T^{-1} \in \cM^2(V)$ with $\|T^{-1}\|_{2\to2}^{-1}=\sup\{ \lambda: \langle Tf,f\rangle \geq \lambda \langle f,f\rangle$ for every $f\in L^0(V)\}$.


\medskip

We are now ready to state the aforementioned theorem of Ando, which is a special case of \cite[Theorem 12]{MR535686}. While Ando stated his theorem for finite-dimensional matrices, the proof extends easily  to the infinite-dimensional case.
 We remark  that the paper of Ando contains a huge number of further related inequalities.


\begin{theorem}[Ando]
\label{thm:Ando}
Let $V$ be a countable set, and let $S,T \in \cM^2(V)$ be commuting, positive semidefinite matrices. Then 
\[
(ST)^{1/2} \circ (ST)^{1/2} \leq S \circ T.
\]
\end{theorem}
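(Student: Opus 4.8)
The plan is to reduce the infinite-dimensional statement to the finite-dimensional case and then prove the finite case via the Schur product theorem together with a block-matrix positivity argument. For the reduction: since both sides of the claimed inequality $(ST)^{1/2}\circ(ST)^{1/2}\le S\circ T$ are determined by their finite principal submatrices (a symmetric matrix in $\cM^2(V)$ is positive semidefinite iff all its finite principal submatrices are), and since the Hadamard product commutes with restriction to principal submatrices, it would suffice to control $(S|_W T|_W)^{1/2}$ in terms of $(ST)^{1/2}|_W$ for finite $W$. This last point is the only subtle part of the reduction, since matrix square roots do not commute with restriction; I would handle it by a spectral-approximation argument, approximating $S$ and $T$ in the strong operator topology by matrices supported on finite blocks and using continuity of the functional calculus, or alternatively by exhausting $V$ by an increasing sequence of finite sets and passing to the limit in the quadratic forms $\langle ((ST)^{1/2}\circ(ST)^{1/2})f,f\rangle$ and $\langle(S\circ T)f,f\rangle$ for fixed finitely supported $f$.

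For the finite-dimensional core, write $R=(ST)^{1/2}$, which is a well-defined positive semidefinite matrix since $S$ and $T$ commute (so $ST$ is positive semidefinite). The goal is then $R\circ R\le S\circ T$, i.e. $S\circ T - R\circ R$ is positive semidefinite. The key observation is that
\[
\begin{pmatrix} S & R \\ R & T\end{pmatrix}
\]
is positive semidefinite: because $S$, $T$, and $R$ all commute and lie in a common commutative algebra, one can simultaneously diagonalise them, reducing this $2\times2$ block-positivity to the scalar inequalities $\lambda_S\ge 0$, $\lambda_T\ge0$, $\lambda_S\lambda_T=\lambda_R^2$, each of which is immediate (and the $2\times2$ scalar matrix $\left(\begin{smallmatrix}a&c\\c&b\end{smallmatrix}\right)$ with $a,b\ge0$, $ab=c^2$ is positive semidefinite). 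Now apply the Schur product theorem (Theorem~\ref{thm:Shurproduct}) to this $2\times2$ block matrix Hadamard-multiplied with itself: the Hadamard square
\[
\begin{pmatrix} S\circ S & R\circ R \\ R\circ R & T\circ T\end{pmatrix}
\]
is positive semidefinite. From positive semidefiniteness of a block matrix $\left(\begin{smallmatrix}P&Q\\Q&P'\end{smallmatrix}\right)$ one extracts, by testing against vectors of the form $(f,-g)$, the inequality $\langle Q f, g\rangle^2 \le \langle P f, f\rangle \langle P' g, g\rangle$ type bounds; more directly, I would instead Hadamard-multiply the original block matrix not by itself but arrange the argument so that the off-diagonal block becomes $R\circ R$ while the diagonal becomes $S\circ T$ — this is achieved by Hadamard-multiplying $\left(\begin{smallmatrix}S&R\\R&T\end{smallmatrix}\right)$ with $\left(\begin{smallmatrix}T&R\\R&S\end{smallmatrix}\right)$ (also positive semidefinite by the same simultaneous-diagonalisation argument, or by symmetry), yielding that $\left(\begin{smallmatrix}S\circ T & R\circ R\\ R\circ R & S\circ T\end{smallmatrix}\right)$ is positive semidefinite. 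Testing this last matrix against the vector $(f,-f)$ gives $2\langle (S\circ T)f,f\rangle - 2\langle (R\circ R)f,f\rangle \ge 0$, which is exactly $R\circ R\le S\circ T$.

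The main obstacle I anticipate is not the finite-dimensional algebra, which is clean once the right pair of block matrices is identified, but rather making the infinite-dimensional reduction fully rigorous: one must ensure that $(ST)^{1/2}$ genuinely lies in $\cM^2(V)$ and that the square root behaves well under the finite-rank approximation, since the subtlety flagged in the excerpt — "there may exist other matrices $S$ for which $S^2=T$" — means one must track the \emph{principal} square root throughout. I would resolve this by invoking the bounded functional calculus: $S,T$ generate a commutative $C^*$-algebra of operators on $L^2(V)$, $(ST)^{1/2}$ is the image of the continuous function $(s,t)\mapsto\sqrt{st}$, and all the required positivity and commutation relations are then inherited from the corresponding pointwise scalar facts on the joint spectrum, after which restriction to finite blocks and a density argument (using that $L^0(V)$ is dense in $L^2(V)$) finishes the proof.
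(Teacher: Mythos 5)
Your proposed proof is essentially correct, and its core is the same Ando-style block-matrix argument the paper uses: build two positive semidefinite block matrices with $S,T$ on the diagonal and $R=(ST)^{1/2}$ off-diagonal, Hadamard-multiply them via the Schur product theorem to obtain a block matrix with $S\circ T$ on the diagonal and $R\circ R$ off-diagonal, and test against $\left[\begin{smallmatrix}f\\-f\end{smallmatrix}\right]$. The genuine difference is how you establish that $\left[\begin{smallmatrix}S&R\\R&T\end{smallmatrix}\right]$ and $\left[\begin{smallmatrix}T&R\\R&S\end{smallmatrix}\right]$ are positive semidefinite: you do it by simultaneously diagonalising (or, in infinite dimensions, by the joint spectral measure of) the commuting family $\{S,T,R\}$ and verifying the resulting scalar $2\times2$ inequality $\left(\begin{smallmatrix}s&\sqrt{st}\\\sqrt{st}&t\end{smallmatrix}\right)\ge 0$ pointwise, whereas the paper deduces it from the Schur-complement criterion of \cref{lem:block1}, writing $S=(ST)^{1/2}T^{-1}(ST)^{1/2}$. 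Your mechanism is actually cleaner here: \cref{lem:block1} presupposes that $T$ (and, for the second block matrix, $S$) is positive \emph{definite} and hence invertible, which the hypotheses of \cref{thm:Ando} do not grant; the paper's argument therefore implicitly requires a limiting step from positive definite to positive semidefinite that your spectral argument avoids entirely. (An even more economical route, in the same spirit as yours, is to observe that since $S^{1/2}$ and $T^{1/2}$ commute one has $\left[\begin{smallmatrix}S&R\\R&T\end{smallmatrix}\right]=\left[\begin{smallmatrix}S^{1/2}\\T^{1/2}\end{smallmatrix}\right]\left[\begin{smallmatrix}S^{1/2}&T^{1/2}\end{smallmatrix}\right]$, manifestly a positive semidefinite Gram product.)

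One caution on the framing of your infinite-dimensional reduction. Restricting $S$ and $T$ to a finite principal block destroys commutativity, so $S|_W$ and $T|_W$ need not commute and $\bigl(S|_WT|_W\bigr)^{1/2}$ need bear no relation to $(ST)^{1/2}|_W$; the approximation schemes you sketch in the middle paragraph do not obviously repair this, and trying to reduce the whole theorem to the finite-dimensional case is more trouble than it is worth. You in fact land on the right fix at the end: run the block-positivity step directly in $\cM^2(V)\oplus\cM^2(V)$ using the bounded functional calculus for the commuting pair $(S,T)$, note that the only place where a finite-dimensional reduction is actually needed is inside the Schur product theorem (where it is painless, because positive semidefiniteness is a statement about finite principal submatrices and Hadamard products commute with restriction), and then conclude by testing against $\left[\begin{smallmatrix}f\\-f\end{smallmatrix}\right]$ for $f\in L^0(V)$. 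If you lead with that version and drop the attempt to restrict $S$ and $T$ themselves, the argument is complete and slightly more robust than the one in the paper.
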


As discussed above, we include a full proof of this theorem with the dual aims of making it accessible to probabilists and verifying the infinite-dimensional case.
Before starting the proof, let us note that the theorem immediately implies \cref{prop:AndoTp}. 

\begin{proof}[Proof of \cref{prop:AndoTp} given \cref{thm:Ando}]
Let $T \in \cM^2(V)$ be positive semidefinite. Applying \cref{thm:Ando} to the commuting, positive semidefinite matrices $T$ and $T^3$ yields that $T^2 \circ T^2 \leq T \circ T^3$ as claimed.
\end{proof}

The proof of \cref{thm:Ando} will proceed by applying the Schur product theorem to various judiciously defined \emph{block matrices}; see \cite{MR2284176} for further applications of this technique. Let $V$ be a countable set and consider the disjoint union $V \sqcup V= V \times \{1,2\}$. Given $A,B,C,D \in \cM(V)$, we define the \textbf{block matrix}
\[
\left[\begin{matrix} A & B \\ C & D \end{matrix}\right] \in \cM(V \sqcup V)
\qquad \text{by} \qquad 
\left[\begin{matrix} A & B \\ C & D \end{matrix}\right]\bigl((u,i),(v,j)\bigr) = \begin{cases}
A(u,v) & i=1,\, j=1\\
C(u,v) & i=2,\, j=1\\
B(u,v) & i=1,\, j=2\\
D(u,v) & i=2,\, j=2
\end{cases}
\]
so that if $f,g \in L^0(V)$ and we define
\[\left[\begin{matrix} f \\ g \end{matrix}\right] \in L^0(V \sqcup V) \quad \text{by} \quad \left[\begin{matrix} f \\ g \end{matrix}\right](v,i) = \begin{cases} f(v) & i=1 \\ g(v) & i=2 \end{cases}
\qquad \text{then} \qquad
\left[\begin{matrix} A & B \\ C & D \end{matrix}\right] \left[\begin{matrix} f \\ g \end{matrix}\right] = \left[\begin{matrix} A f + Bg \\ Cf + Dg \end{matrix}\right].
\]
Similarly, if $A,B,C,D,E,F,G,H \in \cM^2(V)$ then we can compute the products of the associated block matrices in the usual way
\[
\left[\begin{matrix} A & B \\ C & D \end{matrix}\right]\left[\begin{matrix} E & F \\ G & H \end{matrix}\right] = \left[\begin{matrix} A E + BG & AF + BH \\ C E + D G & CG + DH \end{matrix}\right] \in \cM^2(V \sqcup V).
\]

\medskip

For each $T \in \cM(V)$, we define $T^\perp $ to be the transpose matrix $T^\perp(u,v)=T(v,u)$. The following lemma is an infinite-dimensional version of \cite[Theorem 1.3.3]{MR2284176}.

\begin{lemma}
\label{lem:block1}
Let $V$ be a countable set, and let $S,T,X \in \cM^2(V)$ be such that $S$ is positive semidefinite and $T$ is positive definite.
 Then 
 \[\left[\begin{matrix} S & X \\ X^\perp & T \end{matrix}\right] \geq 0 \qquad \text{ if and only if } \qquad S \geq X T^{-1} X^\perp.\]
\end{lemma}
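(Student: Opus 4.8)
The plan is to prove \cref{lem:block1} via the standard Schur-complement identity, paying attention to the infinite-dimensional bookkeeping. Write $M=\left[\begin{matrix} S & X \\ X^\perp & T\end{matrix}\right]$. First I would record that $M\in\cM^2(V\sqcup V)$ and hence defines a bounded operator on $L^2(V\sqcup V)=L^2(V)\oplus L^2(V)$: this holds because $S$, $T$, $X$, $X^\perp$ and (since $T$ is positive definite) its bounded inverse $T^{-1}$ all lie in $\cM^2(V)$, and $\cM^2(V)$ is closed under products and transposes. In particular $XT^{-1}X^\perp\in\cM^2(V)$ is symmetric, so the inequality $S\ge XT^{-1}X^\perp$ is meaningful, and by the remarks preceding the lemma it is equivalent to test positive semidefiniteness of $M$ on all of $L^2(V\sqcup V)$ rather than only on finitely supported vectors.

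The heart of the argument is the quadratic-form version of the block factorization
\[
M=P^\perp\,\operatorname{diag}\!\bigl(S-XT^{-1}X^\perp,\,T\bigr)\,P,\qquad P=\left[\begin{matrix} I & 0 \\ T^{-1}X^\perp & I\end{matrix}\right],
\]
which one checks directly by block multiplication using that $T^{-1}$ is symmetric and $TT^{-1}=T^{-1}T=I$ (note $P^\perp=\left[\begin{matrix} I & XT^{-1} \\ 0 & I\end{matrix}\right]$). Since $P$ is a bijection of $L^2(V\sqcup V)$ with inverse $\left[\begin{matrix} I & 0 \\ -T^{-1}X^\perp & I\end{matrix}\right]\in\cM^2(V\sqcup V)$, for $h'=\left[\begin{matrix} f \\ g\end{matrix}\right]$ we get $\langle Mh',h'\rangle=\langle\operatorname{diag}(S-XT^{-1}X^\perp,T)\,Ph',\,Ph'\rangle=\langle(S-XT^{-1}X^\perp)f,\,f\rangle+\langle Th,\,h\rangle$, where $h:=g+T^{-1}X^\perp f$. (Alternatively, this identity can be obtained by expanding $\langle Mh',h'\rangle=\langle Sf,f\rangle+2\langle Xg,f\rangle+\langle Tg,g\rangle$, using $\langle X^\perp f,g\rangle=\langle f,Xg\rangle$, substituting $g=h-T^{-1}X^\perp f$, and observing that the cross terms cancel.)

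Granting the identity, both directions follow at once. If $S\ge XT^{-1}X^\perp$, then both summands on the right are non-negative — the second because a positive definite matrix is in particular positive semidefinite — so $\langle Mh',h'\rangle\ge0$ for every $h'$, i.e.\ $M\ge0$. Conversely, if $M\ge0$, then for an arbitrary $f\in L^0(V)$ I would take $g:=-T^{-1}X^\perp f\in L^2(V)$, so that $h=0$ and the identity reduces to $0\le\langle(S-XT^{-1}X^\perp)f,\,f\rangle$; since $f$ was an arbitrary finitely supported function this gives $S\ge XT^{-1}X^\perp$.

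The only genuinely delicate point — and the step I would be most careful about — is this last manoeuvre: the test vector $g=-T^{-1}X^\perp f$ need not be finitely supported, so one really does need positive semidefiniteness of $M$ as an operator on $L^2$, which is exactly why it is worth recording $M\in\cM^2(V\sqcup V)$ and the existence of the bounded symmetric inverse $T^{-1}$ at the outset. Everything else is a routine manipulation of quadratic forms.
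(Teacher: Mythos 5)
Your proof is correct and takes essentially the same route as the paper: both rest on the Schur-complement congruence $M = P^\perp\operatorname{diag}(S-XT^{-1}X^\perp,T)P$ (the paper writes this as $QMQ^\perp=\operatorname{diag}(S-XT^{-1}X^\perp,T)$ with $Q=P^{-\perp}$, which is the same identity), and both handle the converse direction by noting that the required test vector lives in $L^2(V)$ rather than $L^0(V)$, so positive semidefiniteness must be interpreted as an $L^2$ quadratic-form condition. The only cosmetic difference is that the paper packages the $L^0$-vs-$L^2$ point into a preliminary remark that congruence by an invertible element of $\cM^2(V)$ preserves positive semidefiniteness, whereas you build it directly into the quadratic-form manipulation.
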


Before proving this lemma, let us note that if $S,T \in \cM^2(V)$ are \emph{congruent} in the sense that there exists a matrix $C \in \cM^2(V)$ with inverse $C^{-1} \in \cM^2(V)$ such that $S = CTC^\perp$, then $S$ is positive semidefinite if and only if $T$ is. Indeed, first note that congruence is an equivalence relation since if $S = CTC^\perp$ then $T = C^{-1} S (C^{-1})^\perp$. Congruence is also trivially seen to preserve symmetry. To conclude, note that if $T$ is positive semidefinite then  we have that $\langle Sf,f \rangle = \langle CTC^\perp f,f \rangle = \langle T (C^\perp f), (C^\perp f)\rangle \geq 0$ for every $f\in L^2(V)$ so that $S$ is positive semidefinite as claimed. 

\begin{proof}[Proof of \cref{lem:block1}]
Since $T$ is positive definite, it is invertible with inverse $T^{-1}\in \cM^2(V)$.
Write $I$ and $O$ for the identity and zero matrices indexed by $V$ respectively. 
The block matrix we are interested in is congruent to
\begin{align}
\left[\begin{matrix} I & -XT^{-1} \\ O & I \end{matrix}\right] \left[\begin{matrix} S & X \\ X^\perp & T \end{matrix}\right]  \left[\begin{matrix} I & -XT^{-1} \\ O & I \end{matrix}\right]^\perp
&=
\left[\begin{matrix} I & -XT^{-1} \\ O & I \end{matrix}\right] \left[\begin{matrix} S & X \nonumber\\ X^\perp & T \end{matrix}\right]  \left[\begin{matrix} I & O \\ -T^{-1}X^\perp & I \end{matrix}\right]\\
&= \left[ \begin{matrix} S - X T^{-1} X^\perp & O \\ O & T \end{matrix}\right],
\label{eq:congruence}
\end{align}
where we note that this is indeed a congruence since
\[
\left[\begin{matrix} I & -XT^{-1} \\ O & I \end{matrix}\right] \qquad \text{ is invertible with inverse } \qquad \left[\begin{matrix} I & -XT^{-1} \\ O & I \end{matrix}\right]^{-1} = \left[\begin{matrix} I & XT^{-1} \\ O & I \end{matrix}\right].
\]
 The matrix on the right hand side of \eqref{eq:congruence} is positive semidefinite if and only if
\[
\left \langle \left[ \begin{matrix} S - X T^{-1} X^\perp & O \\ O & T \end{matrix}\right] \left[\begin{matrix} f \\ g \end{matrix}\right], \left[\begin{matrix} f \\ g \end{matrix}\right] \right \rangle = \bigl\langle (S -X T^{-1}X^\perp) f,f\bigr\rangle + \langle T g, g\rangle \geq 0,
\]
for every $f,g \in L^0(V)$, and since $T$ is positive definite this is the case if and only if $S -X T^{-1}X^\perp$ is positive semidefinite as claimed.
\end{proof}

We will also use the following easy fact. Note that the implication here is only in one direction.

\begin{lemma}
\label{lem:block2}
Let $V$ be countable and let $S,T \in \cM(V)$ be symmetric matrices. 
\begin{enumerate}
  \item
   If the block matrix $\left[\begin{matrix} S & T \\ T & S \end{matrix}\right]$ is positive semidefinite  then $S - T$ is positive semidefinite.
  \item If the block matrix $\left[\begin{matrix} S & T \\ T & S \end{matrix}\right]$ is positive definite  then $S - T$ is positive definite.
\end{enumerate}
\end{lemma}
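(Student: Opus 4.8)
The plan is to probe the block matrix only with the antisymmetric test vectors $\left[\begin{matrix} f \\ -f \end{matrix}\right]$, which collapse the block structure onto the combination $S-T$. Write $M = \left[\begin{matrix} S & T \\ T & S \end{matrix}\right] \in \cM(V \sqcup V)$ and note at the outset that $S-T$ is symmetric, since $S$ and $T$ both are.

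For part (1), I would fix an arbitrary $f \in L^0(V)$ and set $g = \left[\begin{matrix} f \\ -f \end{matrix}\right] \in L^0(V \sqcup V)$; this is finitely supported, so all inner products below are finite sums and no convergence issue arises. Using the block multiplication rule recorded above one computes $Mg = \left[\begin{matrix} Sf - Tf \\ Tf - Sf \end{matrix}\right]$, and hence
\[
\left\langle M g,\, g \right\rangle = \langle (S-T)f, f \rangle + \langle (T-S)f, -f \rangle = 2\langle (S-T)f, f \rangle.
\]
If $M$ is positive semidefinite, the left-hand side is $\geq 0$, so $\langle (S-T)f, f\rangle \geq 0$; as $f \in L^0(V)$ was arbitrary and $S - T$ is symmetric, this is exactly the statement that $S-T$ is positive semidefinite.

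For part (2), I would run the same computation but keep track of norms. If $M$ is positive definite there is $\lambda > 0$ with $\langle Mh, h\rangle \geq \lambda \langle h, h\rangle$ for all $h \in L^0(V \sqcup V)$. Taking $h = g$ with $g$ as above and using $\langle g, g\rangle = \langle f, f\rangle + \langle -f, -f\rangle = 2\langle f, f\rangle$ together with the identity $\langle Mg, g\rangle = 2\langle (S-T)f, f\rangle$ just established, we obtain $2\langle (S-T)f, f\rangle \geq 2\lambda\langle f, f\rangle$, i.e.\ $\langle (S-T)f, f\rangle \geq \lambda \langle f, f\rangle$ for every $f \in L^0(V)$. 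Hence $S - T$ is positive definite, with the same constant.

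I do not expect any genuine obstacle here: the argument is a one-line substitution. The only points to state carefully are that everything is done with finitely supported vectors, so no boundedness hypotheses on $S$ or $T$ enter, and that the implication goes only one way precisely because testing against the subspace of antisymmetric vectors $\left[\begin{matrix} f \\ -f \end{matrix}\right]$ records strictly less than positivity of $M$ on all of $L^0(V \sqcup V)$ (in particular nothing is learned about $S+T$).
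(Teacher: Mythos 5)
Your proof is correct and takes essentially the same approach as the paper: test the block matrix against antisymmetric vectors $\left[\begin{matrix} f \\ -f \end{matrix}\right]$ and read off $2\langle (S-T)f,f\rangle$. You simply spell out the ``claim follows from the definitions'' step, including the bookkeeping $\langle g,g\rangle = 2\langle f,f\rangle$ needed to carry through the positive-definiteness constant $\lambda$ in part (2).
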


\begin{proof}[Proof of \cref{lem:block2}]
We can compute that
\[\left\langle \left[\begin{matrix} S & T \\ T & S \end{matrix}\right] \left[\begin{matrix} \phantom{-}f \\ -f \end{matrix}\right] , \left[\begin{matrix} \phantom{-}f \\ -f \end{matrix}\right] \right \rangle = 2\bigl\langle (S-T) f, f \bigr\rangle
\]
for every $f \in L^0(V)$, and the claim follows from the definitions. 
\end{proof}

We are now ready to prove \cref{thm:Ando}.

\begin{proof}[Proof of \cref{thm:Ando}]
The assumption that $S$ and $T$ commute implies that $(ST)^{1/2} = (TS)^{1/2} = ((ST)^{1/2})^\perp$. In particular, we have that $S = (ST)^{1/2} T^{-1} (ST)^{1/2}$ and $T = (ST)^{1/2} S^{-1} (ST)^{1/2}$, and we deduce from \cref{lem:block1} that the block matrices \[\left[\begin{matrix} S & (ST)^{1/2} \\ (ST)^{1/2} & T \end{matrix}\right] \qquad \text{and} \qquad \left[\begin{matrix} T & (ST)^{1/2} \\ (ST)^{1/2} & S \end{matrix}\right]\] 
are both positive semidefinite. Applying the Schur product theorem we deduce that
\[
\left[\begin{matrix} S \circ T & (ST)^{1/2} \circ (ST)^{1/2} \\ (ST)^{1/2} \circ (ST)^{1/2} & S \circ T \end{matrix}\right]
\]
is positive semidefinite also, so that the claim follows from \cref{lem:block2}.
\end{proof}


\noindent \textbf{The trace on the group von Neumann algebra.}
We now wish to apply Ando's theorem to prove \cref{prop:triangle_comparison}. To do this, we will employ the notion of the \emph{trace} on the \emph{von Neumann algebra} associated to the action of $\Aut(G)$ on $G$. We assure the unfamiliar reader that, despite the intimidating name, this is in fact a very simple notion.

\medskip

Given a connected, transitive weighted graph $G=(V,E,J)$ with automorphism group $\Aut(G)$, we define the von Neumann algebra $\cA(G)$ to be the set of matrices $T \in \cM^2(V)$ that are diagonally invariant under the action of $\Aut(G)$ on $V$ in the sense that $T(\gamma u,\gamma v)=T(u,v)$ for every $u,v \in V$ and $\gamma \in \Aut(G)$. Note that $\cA(G)$ is indeed a von Neumann algebra in the sense that it is a weak$^*$-closed subspace of $\cM^2(V) \cong \cB(L^2(V))$ that is closed under multiplication and adjunction. It follows trivially from the definitions that $\cA(G)$ is also closed under Hadamard products. We will also need the slightly less obvious fact that $\cA(G)$ is closed under taking principal square roots of its positive semidefinite elements, which we now prove.


\begin{lemma}
\label{lem:square_roots_in_A}
Let $G=(V,E,J)$ be a connected, transitive weighted graph. If $T \in \cA(G)$ is positive semidefinite then its principal square root $T^{1/2}$ also belongs to $\cA(G)$.
\end{lemma}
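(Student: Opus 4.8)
The plan is to encode membership in $\cA(G)$ as commutation with a family of unitary operators and then invoke the uniqueness of the principal square root. First, for each $\gamma \in \Aut(G)$ I would introduce the operator $U_\gamma$ on $L^2(V)$ determined by $U_\gamma \mathbbm{1}_v = \mathbbm{1}_{\gamma v}$ (equivalently $(U_\gamma f)(v) = f(\gamma^{-1} v)$); since $\gamma$ permutes $V$, this operator permutes the orthonormal basis $\{\mathbbm{1}_v : v \in V\}$ of $L^2(V)$ and is therefore unitary, with $U_\gamma^{-1} = U_{\gamma^{-1}}$. Computing matrix entries shows that $U_\gamma T U_\gamma^{-1}$ has $(u,v)$ entry equal to $T(\gamma^{-1}u,\gamma^{-1}v)$, so a matrix $T \in \cM^2(V)$ lies in $\cA(G)$ if and only if $U_\gamma T U_\gamma^{-1} = T$ for every $\gamma \in \Aut(G)$.

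Given this reformulation, let $T \in \cA(G)$ be positive semidefinite, and recall (as discussed above) that $T$ has a unique positive semidefinite square root $T^{1/2} \in \cM^2(V)$. Fix $\gamma \in \Aut(G)$ and set $S_\gamma := U_\gamma T^{1/2} U_\gamma^{-1}$. Because $U_\gamma$ is unitary, $S_\gamma$ is symmetric and satisfies $\langle S_\gamma f, f \rangle = \langle T^{1/2} U_\gamma^{-1} f , U_\gamma^{-1} f \rangle \geq 0$ for all $f \in L^2(V)$, so $S_\gamma$ is positive semidefinite; and $S_\gamma^2 = U_\gamma T^{1/2} U_\gamma^{-1} U_\gamma T^{1/2} U_\gamma^{-1} = U_\gamma T U_\gamma^{-1} = T$, the last equality using $T \in \cA(G)$. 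Hence $S_\gamma$ is a positive semidefinite square root of $T$, so by uniqueness of the principal square root $U_\gamma T^{1/2} U_\gamma^{-1} = S_\gamma = T^{1/2}$. As $\gamma$ was arbitrary and $T^{1/2} \in \cM^2(V)$, this shows $T^{1/2} \in \cA(G)$.

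I do not expect any serious obstacle: the only points needing a little care are verifying that each $U_\gamma$ is a well-defined unitary on $L^2(V)$ and that conjugation by it transforms matrix entries by $(u,v) \mapsto T(\gamma^{-1}u,\gamma^{-1}v)$, both routine, while the genuinely non-trivial input — existence and uniqueness of the principal square root of a bounded positive semidefinite operator — has already been imported from the spectral theorem. As an alternative to invoking uniqueness, one could instead approximate $\sqrt{\,\cdot\,}$ uniformly on the compact spectrum $[0,\|T\|_{2\to 2}]$ by polynomials $p_n$ with $p_n(0) = 0$, so that $p_n(T) \to T^{1/2}$ in operator norm with each $p_n(T)$ a polynomial in $T$ lying in the norm-closed algebra $\cA(G)$; the uniqueness argument is cleaner, so that is the route I would take.
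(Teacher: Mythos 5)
Your argument is correct and is essentially the paper's own proof, just repackaged: where the paper defines $S(u,v)=T^{1/2}(\gamma u,\gamma v)$ and verifies boundedness, positive semidefiniteness, and $S^2=T$ by explicit index computations, you encode the same candidate as $U_\gamma T^{1/2}U_\gamma^{-1}$ and get those three facts immediately from unitary conjugation, then invoke uniqueness of the principal square root exactly as the paper does. The unitary-conjugation phrasing (and the polynomial-approximation alternative you mention) is a clean streamlining rather than a different route.
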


\begin{proof}[Proof of \cref{lem:square_roots_in_A}]
It suffices to prove that $T^{1/2}$ is diagonally invariant, i.e., that $T^{1/2}(\gamma u, \gamma v) = T^{1/2}(u,v)$ for every $\gamma \in \Aut(G)$ and $u,v\in V$.
Fix $\gamma \in \Aut(G)$ and let $S \in \cM(V)$ be defined by $S(u,v)=T^{1/2}(\gamma u,\gamma v)$ for each $u,v\in V$. Since $\gamma \in \Aut(G)$ was arbitrary, it suffices to prove that $S$ is equal to $T^{1/2}$. Since the principal square root of $T$ is unique, it suffices to prove that $S$ is positive semidefinite and satisfies $S^2=T$. We begin by proving that $S$ is bounded and positive semidefinite, noting that is is clearly symmetric.
For each $f \in L^0(V)$ let $\gamma f \in L^0(V)$ be given by $\gamma f (u)=f(\gamma^{-1}u)$. Since $\gamma$ is a bijection we have that
\[
Sf(u)=\sum_{v\in V} S(u,v)f(v) = \sum_{v\in V} T^{1/2}(\gamma u, \gamma v)f( v) = \sum_{v\in V} T^{1/2}(\gamma u, v)f( \gamma^{-1} v) = [T^{1/2}(\gamma f)](\gamma u)
\]
for every $f\in L^0(V)$ and $u\in V$ and hence that
\begin{align*}
\|Sf\|=\|T^{1/2}(\gamma f)\|
\leq \|T\|_{2\to2}^2 \|\gamma f\|=\|T\|_{2\to2}^2 \|f\|
\end{align*}
and
\begin{align*}
\langle Sf,f\rangle &= \sum_{u\in V} \sum_{v\in V} T^{1/2}(\gamma u, v)f(u)f( \gamma^{-1} v) = \sum_{u\in V} \sum_{v\in V} T^{1/2}( u, v)f( \gamma^{-1} u)f( \gamma^{-1} v) 
= 
\langle T^{1/2}(\gamma f),\gamma f\rangle
\end{align*}
for every $f\in L^0(V)$.
Since $T^{1/2}$ is bounded and positive semidefinite it follows that $S$ is bounded and positive semidefinite also.
We now verify that $S^2=T$ by computing that
\begin{align*}
S^2(u,v) &= \sum_{w\in V} S(u,w)S(w,v) = \sum_{w\in V} T^{1/2}(\gamma u, \gamma w)T^{1/2}(\gamma w, \gamma v) \\ &= \sum_{w\in V} T^{1/2}(\gamma u, w)T^{1/2}(w, \gamma v) 
=T(\gamma u,\gamma v)=T(u,v)
\end{align*}
for every $u,v\in V$. We deduce that $S=T^{1/2}$ by uniqueness of the principal square root of $T$. 
\end{proof}

We now define the \emph{trace} on $\cA(G)$ and establish its basic properties.

\begin{prop}[The trace on the von Neumann algebra $\cA(G)$]
\label{prop:trace_properties}
Let $G=(V,E,J)$ be a \emph{unimodular}, connected, transitive weighted graph. Define $\Tr:\cA(G) \to \R$  by 
\[
\Tr(T) = T(o,o) \qquad \text{ for every $T\in \cA(G)$.}
\]
Then $\Tr$ is a \textbf{tracial state} on $\cA(G)$. This means that the following conditions hold:
\begin{enumerate}
\item $\Tr:\cA(G) \to \R$ is linear.
\item $\Tr(I)=1$, where $I(u,v)=\mathbbm{1}(u=v)$ is the identity matrix.
\item $\Tr(T T^\perp) \geq 0$ for every $T \in \cA(G)$, with equality if and only if $T=0$.
\item $\Tr(ST) = \Tr(TS)$ for every $S,T \in \cA(G)$.
\end{enumerate}
\end{prop}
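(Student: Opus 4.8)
The plan is to verify the four defining properties of a tracial state in turn; only the traciality property~(4) uses the unimodularity hypothesis in an essential way, and it is the only one requiring any care.

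Properties~(1) and~(2) are immediate from the definition: $\Tr(aS+bT) = (aS+bT)(o,o) = a\hspace S(o,o) + b\hspace T(o,o) = a\Tr(S)+b\Tr(T)$, and $\Tr(I) = I(o,o) = \mathbbm{1}(o=o) = 1$. For property~(3), I would compute directly that $\Tr(TT^\perp) = (TT^\perp)(o,o) = \sum_{v\in V} T(o,v)\hspace T^\perp(v,o) = \sum_{v\in V} T(o,v)^2 \geq 0$, where the sum is finite since $\sum_{v\in V} T(o,v)^2 = \|T\mathbbm{1}_o\|_2^2 \leq \|T\|_{2\to2}^2 < \infty$ as $T\in\cM^2(V)$. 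Equality holds if and only if $T(o,v)=0$ for every $v\in V$; given this, transitivity lets us choose for each $u\in V$ an automorphism $\gamma\in\Aut(G)$ with $\gamma o = u$, and diagonal invariance then gives $T(u,w) = T(\gamma o, w) = T(o,\gamma^{-1}w) = 0$ for every $w\in V$, so $T=0$.

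The substantive point is property~(4). I would set $F\colon V\times V\to\R$ by $F(u,v) = S(u,v)\hspace T(v,u)$ and observe that $F$ is diagonally invariant because $S,T\in\cA(G)$: indeed $F(\gamma u,\gamma v) = S(\gamma u,\gamma v)\hspace T(\gamma v,\gamma u) = S(u,v)\hspace T(v,u) = F(u,v)$ for every $\gamma\in\Aut(G)$. One then has $\sum_{v\in V} F(o,v) = (ST)(o,o) = \Tr(ST)$ and $\sum_{v\in V} F(v,o) = \sum_{v\in V} T(o,v)\hspace S(v,o) = (TS)(o,o) = \Tr(TS)$, so the mass-transport principle~\eqref{eq:MTP} yields the claim — except that the version of~\eqref{eq:MTP} stated above requires its argument to be non-negative, whereas here $F$ may take negative values. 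To deal with this I would first record absolute summability of each row: by the Cauchy--Schwarz inequality, $\sum_{v\in V}|F(o,v)| \leq \bigl(\sum_{v\in V} S(o,v)^2\bigr)^{1/2}\bigl(\sum_{v\in V} T(v,o)^2\bigr)^{1/2} \leq \|S\|_{2\to2}\,\|T^\perp\|_{2\to2} < \infty$, using $S,T\in\cM^2(V)$ together with $\|T^\perp\|_{2\to2} = \|T\|_{2\to2}$, and the same bound holds with $o$ replaced by any vertex by transitivity. I would then apply the mass-transport principle separately to the functions $F^+ = \max(F,0)$ and $F^- = \max(-F,0)$, both of which are non-negative, diagonally invariant (since $\max(\cdot,0)$ is applied entrywise), and have finite row sums, and subtract the two resulting identities to conclude $\sum_{v\in V} F(o,v) = \sum_{v\in V} F(v,o)$, that is, $\Tr(ST)=\Tr(TS)$.

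I expect the only mild obstacle to be precisely this last bookkeeping step — checking absolute convergence so that the mass-transport principle can legitimately be applied to a signed diagonally invariant function — while properties~(1)--(3) and the algebraic heart of~(4) are direct unwindings of the definitions.
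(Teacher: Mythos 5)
Your proof is correct and follows essentially the same route as the paper: the algebraic heart of item (4) is the same application of the signed mass-transport principle to $F(u,v)=S(u,v)T(v,u)$, and your decomposition into $F^{+}$ and $F^{-}$ with the Cauchy--Schwarz integrability check is exactly how the paper justifies the signed version just before stating the proposition. The only difference is that you spell out the ``equality iff $T=0$'' half of item (3) via transitivity, which the paper leaves as ``trivial''; that extra detail is fine and correct.
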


See e.g.\ \cite[\S 5]{AL07} and \cite[Chapter 10.8]{LP:book} for previous uses of similar notions in probabilistic contexts.

\medskip

The proof of this proposition will use the \emph{signed} mass-transport principle, which states that if $G=(V,E,J)$ is a unimodular transitive weighted graph and $F:V\times V\to \R$ is diagonally invariant in the sense that $F(\gamma u,\gamma v)=F(u,v)$ for every $u,v\in V$ and $\gamma \in \Aut(G)$ 
then
$\sum_{v\in V}F(o,v) = \sum_{v\in V}F(v,o)$ provided that $F$ satisfies the integrability condition
\begin{equation}
\label{eq:MTP_integrability}
\sum_{v\in V}|F(o,v)| = \sum_{v\in V}|F(v,o)| <\infty.
\end{equation}
This can be verified by applying the usual mass-transport principle \eqref{eq:MTP} to the positive and negative parts of $F$ separately.

\begin{proof}[Proof of \cref{prop:trace_properties}]
The first two items are trivial. For the third, simply note that
\[
\Tr(TT^\perp) =  \sum_{v\in V} T(o,v)T^\perp(v,o) =  \sum_{v\in V} T(o,v)^2 
\]
for every $T \in \cA(G)$, from which the claim follows trivially.
%
Finally, for the fourth item, we apply the mass-transport principle to the diagonally invariant function $F(u,v) = S(u,v)T(v,u)$, which satisfies the integrability hypothesis \eqref{eq:MTP_integrability} since $S,T \in \cM^2(V)$, to deduce that
\begin{align*}
\Tr(ST) &=  \sum_{v\in V} S(o,v)T(v,o)
= \sum_{v\in V} S(v,o)T(o,v) = \Tr(TS). 
\end{align*}
as claimed.
\end{proof}


The following is an adaptation to our setting of an inequality due to Fej\'er \cite[Theorem A.8]{MR2064921}.

\begin{lemma}
\label{thm:Fejer}
Let $G=(V,E,J)$ be a unimodular, connected, transitive weighted graph. If $S,T \in \cA(G)$ are positive semidefinite then
$\Tr(ST) \geq 0$.
\end{lemma}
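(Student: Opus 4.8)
The plan is to reduce the inequality to item~(3) of \cref{prop:trace_properties}, which asserts that $\Tr(AA^\perp)\ge 0$ for every $A\in\cA(G)$. The key manoeuvre is to pass to square roots. Since $S$ and $T$ are positive semidefinite elements of $\cA(G)$, \cref{lem:square_roots_in_A} guarantees that their principal square roots $S^{1/2}$ and $T^{1/2}$ again lie in $\cA(G)$; as $\cA(G)$ is an algebra, the product $A:=S^{1/2}T^{1/2}$ therefore also belongs to $\cA(G)$ (and in particular to $\cM^2(V)$).

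Next I would compute $AA^\perp$. Because $S^{1/2}$ and $T^{1/2}$ are symmetric, $A^\perp=(S^{1/2}T^{1/2})^\perp=T^{1/2}S^{1/2}$, so $AA^\perp=S^{1/2}T^{1/2}T^{1/2}S^{1/2}=S^{1/2}TS^{1/2}$. Applying the cyclicity of the trace (item~(4) of \cref{prop:trace_properties}) to move the rightmost factor $S^{1/2}$ to the front gives $\Tr(AA^\perp)=\Tr(S^{1/2}TS^{1/2})=\Tr(S^{1/2}S^{1/2}T)=\Tr(ST)$. Combining this identity with the positivity $\Tr(AA^\perp)\ge 0$ from item~(3) of \cref{prop:trace_properties} immediately yields $\Tr(ST)\ge 0$.

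I do not expect any genuine obstacle here: all of the heavy lifting has already been done in establishing that $\Tr$ is a tracial state (\cref{prop:trace_properties}) and that $\cA(G)$ is closed under principal square roots of its positive semidefinite elements (\cref{lem:square_roots_in_A}). The only points requiring a moment's care are that $A$ really does lie in $\cA(G)$ --- which is where closure under both multiplication and square roots is used --- and that the cyclic identity $\Tr(XY)=\Tr(YX)$ is being invoked for matrices in $\cM^2(V)$, for which the relevant sum $\sum_{v}X(o,v)Y(v,o)$ converges absolutely, exactly as required in the proof of item~(4). No idea beyond those already developed in this section should be needed.
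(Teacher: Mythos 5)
Your proof is correct and is essentially identical to the paper's: both pass to square roots, use the cyclic property of $\Tr$ (item 4 of \cref{prop:trace_properties}) to rewrite $\Tr(ST)$ as $\Tr\bigl((S^{1/2}T^{1/2})(S^{1/2}T^{1/2})^\perp\bigr)$, and conclude by positivity (item 3). Your explicit appeal to \cref{lem:square_roots_in_A} to justify that $S^{1/2}T^{1/2}\in\cA(G)$ is a nice touch that the paper leaves implicit.
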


It follows in particular from this lemma that if two matrices $S,T\in \cA(G)$ satisfy $S\leq T$ then $\Tr(SX)\leq \Tr(TX)$ for every positive semidefinite matrix $X\in \cA(G)$.

\begin{proof}[Proof of \cref{thm:Fejer}]
We have by Item 4.\ of \cref{prop:trace_properties} that
\[\Tr(ST)=\Tr(S^{1/2}S^{1/2}T^{1/2}T^{1/2}) = \Tr(S^{1/2}T^{1/2} T^{1/2}S^{1/2}). \]
Since $(S^{1/2} T^{1/2})^\perp = (T^{1/2})^\perp(S^{1/2})^\perp = T^{1/2}S^{1/2}$, it follows from item 3 of \cref{prop:trace_properties} that $\Tr(ST) \geq 0$ as claimed.
%
%
\end{proof}

We are finally ready to prove that $B_\beta \leq A_\beta$ and conclude the proof of \cref{prop:triangle_comparison}.

\begin{proof}[Proof of \cref{prop:triangle_comparison}]
Let $G=(V,E,J)$ be a unimodular, connected, transitive weighted graph and let $0\leq \beta <\beta_c$.
We have by \cref{lem:Abound} that $A_\beta \leq \nabla_\beta^{2}$, so that it suffices to prove that $B_\beta \leq A_\beta$. The two-point matrix $T_\beta$ is positive definite by \cref{lem:positive_definite} and belongs to $\cM^2(V)$ and hence to $\cA(G)$ by sharpness of the phase transition as discussed above. It follows from \cref{prop:AndoTp} that $T_\beta^2 \circ T_\beta^2 \leq T_\beta \circ T_\beta^3$ and hence by \cref{thm:Fejer} that
\begin{equation*}
B_\beta =  \bigl[T_\beta^2 \circ T_\beta^2\bigr]T_\beta^2(o,o) = \Tr\Bigl(\bigl[T_\beta^2 \circ T_\beta^2\bigr]T_\beta^2\Bigr) \leq \Tr\Bigl(\bigl[T_\beta \circ T_\beta^3\bigr]T_\beta^2\Bigr)
=  \bigl[T_\beta \circ T_\beta^3\bigr]T_\beta^2(o,o) = A_\beta
\end{equation*}
as claimed. Together with \cref{prop:chi_diff_ineq} this also concludes the proof of \cref{thm:main_chi}.
\end{proof}

\medskip

\noindent \textbf{Data availability statement}: Data sharing not applicable to this article as no datasets were generated or analysed during the current
study.

\medskip

\noindent \textbf{Conflicts of interest statement}: The authors have no financial or proprietary interests in any material discussed in this article.

\end{document}